\newtheorem{thm}{Theorem}[section]
\newtheorem{prop}[thm]{Proposition}
\newtheorem{cor}[thm]{Corollary}
\newtheorem{lem}[thm]{Lemma}
\newtheorem{claim}[thm]{Claim}
\newtheorem{defn}[thm]{Definition}
\newtheorem{rem}[thm]{Remark}
\newtheorem{ex}[thm]{Example}
\numberwithin{equation}{section}
\def\bG{{\mathbb G}}
\def\bL{{\mathbb L}}
\def\bT{{\mathbb T}}
\def\A{{\mathbb A}}
\def\C{{\mathbb C}}
\def\F{{\mathbb F}}
\def\N{{\mathbb N}}
\renewcommand{\P}{{\mathbb P}}
\renewcommand{\L}{{\mathbb L}}
\def\Q{{\mathbb Q}}
\def\Z{{\mathbb Z}}
\def\R{{\mathbb R}}
\def\K{{\mathbb K}}
\def\m{{\mathfrak{m}}}
\def\cD{{\mathcal D}}
\def\cL{{\mathcal L}}
\def\cM{{\mathcal M}}
\def\cO{{\mathcal O}}
\def\cP{{\mathcal P}}
\def\cT{{\mathcal T}}
\def\cV{{\mathcal V}}
\def\cW{{\mathcal W}}
\newcommand{\ie}{{\it i.e.\/}\ }
\newcommand{\eg}{{\it e.g.\/}\ }
\newcommand{\cf}{{\it cf.\/}\ }
\def\text{\hbox}
\def\SL{{\rm SL}}
\title[Feynman integrals and determinants]{Parametric Feynman 
integrals and determinant hypersurfaces}
\author[Aluffi]{Paolo Aluffi}
\author[Marcolli]{Matilde Marcolli}
\address{Department of Mathematics \\ Florida State University \\
Tallahassee, FL 32306, USA}
\email{aluffi\@@math.fsu.edu}
\address{Department of Mathematics \\ Caltech \\ Pasadena, CA 91125, USA}
\email{matilde\@@caltech.edu}
\begin{document}

\maketitle

\begin{abstract}
The purpose of this note is to show that, under certain combinatorial conditions 
on the graph, parametric Feynman integrals can be realized as periods on the 
complement of the determinant hypersurface $\hat\cD_\ell$ in affine space 
$\A^{\ell^2}$, 
with $\ell$ the number of loops of the Feynman graph. The question of whether
these are periods of mixed Tate motives can then be reformulated as a question
on a relative cohomology of the pair 
$(\A^{\ell^2}\smallsetminus \hat\cD_\ell, \hat\Sigma_{\ell,g}
\smallsetminus (\hat\Sigma_{\ell,g} \cap \hat\cD_\ell))$ being a realization of a 
mixed Tate motive, 
where $\hat\Sigma_{\ell,g} $ is a normal crossings divisor depending only on the number
of loops and the genus of the graph. We show explicitly that the relative cohomology is
a realization of a mixed Tate motive in the case of three loops and we give alternative 
formulations of the main question in the general case, by describing the locus 
$\hat\Sigma_{\ell,g} \smallsetminus (\hat\Sigma_\ell \cap \hat\cD_\ell)$  
in terms of intersections of unions of Schubert cells in flag varieties. 
We also discuss different methods of regularization aimed at removing the 
divergences of the Feynman integral. 
\end{abstract}

\section{Introduction}\label{intro}

The question of whether Feynman integrals arising in perturbative scalar quantum field
theory are periods of mixed Tate motives can be seen (see \cite{BEK}, \cite{Blo}) as 
a question on whether certain relative cohomologies associated to algebraic varieties
defined by the data of the parametric representation of the Feynman integral are 
realizations of mixed Tate motives. In this paper we investigate another possible
viewpoint on the problem, which leads us to consider a different relative cohomology,
defined in terms of the complement of the affine determinant hypersurface and the locus
where the hypersurface intersects the image of a simplex under a linear map defined 
by the Feynman graph.
For all graphs with a given number of loops~$\ell$, admitting a minimal
embedding in an orientable surface of genus~$g$, and satisfying a natural 
combinatorial condition, we relate the question mentioned above to a
problem in the geometry of coordinate subspaces of an $\ell$-dimensional
vector space, which only depends on the genus~$g$. 

More precisely, we consider for each graph $\Gamma$ as above and satisfying 
a transparent combinatorial condition (summarized at the beginning 
of~\S\ref{PeriodSec}) a normal 
crossing divisor $\hat\Sigma_\Gamma$ in the affine space $\A^{\ell^2}$ of 
$\ell\times\ell$ matrices. We observe that, modulo the issue of divergences, 
the parametric Feynman integral is a period of the pair 
$(\A^{\ell^2}\smallsetminus \hat\cD_\ell,
\hat\Sigma_\Gamma \smallsetminus (\hat\cD_\ell \cap \hat\Sigma_\Gamma))$,
where $\hat\cD_\ell$ is the determinant hypersurface. We then observe that
all these normal crossing divisors $\hat\Sigma_\Gamma$ may be immersed 
into a fixed normal crossing divisor $\hat\Sigma_{\ell,g}$, determined by
the number of loops $\ell$ and the embedding genus $g$; therefore, 
the question of whether Feynman integrals are
periods of mixed Tate motives may be decided by verifying that the
motive
$$ \m(\A^{\ell^2}\smallsetminus \hat\cD_\ell,
\hat\Sigma_\Gamma \smallsetminus (\hat\cD_\ell \cap \hat\Sigma_{\ell,g})), $$
whose realization is the relative cohomology of the corresponding pair, 
is mixed Tate.
In fact, we show that verifying this assertion for $g=0$ would suffices to deal with 
all graphs $\Gamma$ with $b_1(\Gamma)=\ell$ (and satisfying our combinatorial
condition), simultaneously for all genera.

We approach this question by an inclusion-exclusion argument, reducing it
to verifying that specific loci in $\A^{\ell^2}$ are mixed Tate (see \S\ref{Tmq}).
We carry out this verification for $\ell\le 3$ loops (\S\ref{MotFrameSec}), 
showing that the motive 
$\m(\A^{9}\smallsetminus \hat\cD_3, \hat\Sigma_{3,0} \smallsetminus 
(\hat\cD_3 \cap \hat\Sigma_{3,0})$ is mixed Tate.
In doing so, we obtain explicit formulae for
the class $[\hat\Sigma_{3,0} \smallsetminus (\hat\cD_3 \cap \hat\Sigma_{3,0})]$
(corresponding to the `wheel with three spokes')
and for the classes of strata of the same locus, in the Grothendieck group
of varieties. These classes may be assembled to construct the corresponding
class for any graph with three loops (satisfying our combinatorial condition).
This illustrates a simple case of our strategy: it follows that, modulo the issue
of divergences, Feynman integrals of graphs with three or fewer loops are 
indeed periods of mixed Tate motives. Carrying out the same strategy for a 
larger number of loops is a worthwhile project.

Finally, in \S\ref{Divs} we discuss the problem of regularization of divergent Feynman
integrals, and how different possible regularizations can be made compatible with
the approach via determinant hypersurfaces described here.

\bigskip

We recall the basic notation and terminology we use in the following.

\begin{defn}\label{FeyDiag}
Consider a scalar field theory with Lagrangian
\begin{equation}\label{Lagr}
 \cL(\phi)= \frac 12 (\partial \phi)^2 - \frac{m^2}{2} \phi^2 -
\cL_{int}(\phi),
\end{equation}
where $\cL_{int}(\phi)$ is a polynomial in $\phi$ of degree at least
three. Then a one particle irreducible (1PI) Feynman graph $\Gamma$ of the
theory is a finite connected graph with the following properties. 
\begin{itemize}
\item The valence of
each vertex is equal to the degree of one of the monomials in the
Lagrangian \eqref{Lagr}.
\item The set $E(\Gamma)$ of edges of the graph is divided into {\em
internal} and {\em external} edges, $E(\Gamma)=E_{int}(\Gamma)\cup
E_{ext}(\Gamma)$. Each internal edge connects two vertices of the
graph, while the external edges have only one vertex. (One thinks of
an internal edges as being a union of two half-edges and an external 
one as being a single half-edge.) 
\item The graph cannot be disconnected by removing a single internal
edge. This is the 1PI condition.
\end{itemize}
\end{defn}

In the following we denote by $n=\# E_{int}(\Gamma)$ the number of
internal edges, by $N=\# E_{ext}(\Gamma)$ the number of external
edges, and by $\ell=b_1(\Gamma)$ the number of loops.

In their parametric form, the Feynman integrals of {\em massless} perturbative
scalar quantum field theories (\cf \S 6-2-3 of \cite{ItZu}, 
\S 18 of \cite{BjDr}, and \S 6 of \cite{Naka})
are integrals of the form
\begin{equation}\label{paramInt}
U(\Gamma,p)=   \frac{\Gamma(n-D\ell/2)}{(4\pi)^{\ell D/2}} 
\int_{\sigma_n} \frac{P_\Gamma(t,p)^{-n+D\ell/2} \,\omega_n}
{\Psi_\Gamma(t)^{-n +(\ell+1)D/2}},
\end{equation}
where $\Gamma(n-D\ell/2)$ is a possibly divergent $\Gamma$-factor, 
$\sigma_n$ is the simplex
\begin{equation}\label{sigman}
\sigma_n=\{ (t_1,\ldots,t_n)\in \R_+^n \,|\, \sum_i t_i =1 \}
\end{equation}
and the polynomials $\Psi_\Gamma(t)$ and $P_\Gamma(t,p)$ are 
obtained from the combinatorics of the graph, respectively as 
\begin{equation}\label{PsiGamma}
\Psi_\Gamma(t)= \sum_{T\subset \Gamma} \prod_{e \notin E(T)} t_e,
\end{equation}
where the sum is over all the spanning trees $T$ of $\Gamma$ and
\begin{equation}\label{PGammapt}
P_\Gamma(p,t) = \sum_{C\subset \Gamma} s_C \prod_{e\in C} t_e, 
\end{equation}
where the sum is over the cut-sets $C\subset \Gamma$, \ie the
collections of $b_1(\Gamma)+1$ internal edges that divide the graph $\Gamma$ in
exactly two connected components $\Gamma_1\cup \Gamma_2$. 
The coefficient $s_C$ is a function of the external momenta attached 
to the vertices in either one of the two components
\begin{equation}\label{sCcoeff}
s_C = \left(\sum_{v\in V(\Gamma_1)} P_v\right)^2 = \left(\sum_{v\in
V(\Gamma_2)} P_v\right)^2.
\end{equation}
Here the $P_v$ are defined as
\begin{equation}\label{Pv}
P_v=\sum_{e\in E_{ext}(\Gamma), t(e)=v} p_e,
\end{equation}
where the $p_e$ are incoming external momenta attached to the external
edges of $\Gamma$ and satisfying the conservation law
\begin{equation}\label{extmom0}
 \sum_{e\in E_{ext}(\Gamma)} p_e =0. 
\end{equation}
In order to work with algebraic differential forms defined over $\Q$,
we assume that the external momenta are also taking rational
values $p_e\in \Q^D$.

Ignoring the $\Gamma$-function factor in \eqref{paramInt}, 
one is interested in understanding what kind of period is the
integral
\begin{equation}\label{Int}
\int_{\sigma_n} \frac{P_\Gamma(t,p)^{-n+D\ell/2} \,\omega_n}
{\Psi_\Gamma(t)^{-n +(\ell+1)D/2}}.
\end{equation} 

In quantum field theory one can consider the same physical theory (with specified
Lagrangian) in different spacetime dimensions $D\in \N$. In fact, one should think
of the dimension $D$ as one of the variable parameters in the problem. For the purposes
of this paper, we work in the range where $D$ is sufficiently large, so that $n\leq D\ell/2$.
The case $n= D\ell/2$ is the {\em log divergent} case, where the integral \eqref{Int} simplifies
to the form
\begin{equation}\label{logdiv}
\int_{\sigma_n} \frac{\omega_n}
{\Psi_\Gamma(t)^{D/2}}.
\end{equation}

Another case where the Feynman integral has the simpler form 
\eqref{logdiv}, even for graphs that do not necessarily satisfy
the log divergent condition, \ie for $n\neq D\ell/2$, is where
one considers the case with nonzero mass $m\neq 0$, but with
external momenta set equal to zero. In such cases, the parametric
Feynman integral becomes of the form
\begin{equation}\label{massFeyInt}
\int_{\sigma_n} \frac{V_\Gamma(t,p)^{-n+D\ell/2}\omega_n}
{\Psi_\Gamma(t)^{D/2}} |_{p=0}= m^{-2n+D\ell} \int_{\sigma_n} \frac{\omega_n}{\Psi_\Gamma(t)^{D/2}},
\end{equation}
where $V_\Gamma(t,p)$ is of the form
$$ V_\Gamma(t,p)= p^\dag R_\Gamma(t) p + m^2, $$
with 
$$ V_\Gamma(t,p)|_{m=0} = \frac{P_\Gamma(t,p)}{\Psi_\Gamma(t)}. $$

In the following we assume that we are either in the massless case \eqref{Int}
and in the range of dimensions $D$ satisfying $n\leq D\ell/2$, or in the massive
case with zero external momenta \eqref{massFeyInt} and arbitrary dimension.

A first issue one needs to clarify in addressing the question of Feynman integrals
and periods is the fact that the integral \eqref{Int} is often divergent. 
Divergences are
contributed by the intersection $\sigma_n \cap \hat X_\Gamma$, with
$\hat X_\Gamma =\{ t\in \A^n \,|\, \Psi_\Gamma(t)=0 \}$, which is often 
non-empty. Although there are cases where a nonempty intersection
$\sigma_n \cap \hat X_\Gamma$ may still give rise to an absolutely convergent
integral, hence a period, these are relatively rare cases and usually some
regularization and renormalization procedure is needed to eliminate the
divergences over the locus where the domain of integration meets the
graph hypersurface. Notice that these intersections only occur on the
boundary $\partial \sigma_n$, since in the interior of $\sigma_n$ the
polynomial $\Psi_\Gamma(t)$ is strictly positive (see \eqref{PsiGamma}).

Our results will apply directly to all cases where the integral is convergent, 
while we discuss in Section \ref{Divs} the case where a regularization procedure
is required to treat divergences in the Feynman integrals. 
The main question is then, more precisely formulated, whether it is 
true that the numbers obtained by computing such integrals 
(after removing a possibly divergent Gamma factor, and after regularization and
%
renormalization when needed) are always periods of mixed Tate motives.

The main contribution of this paper is the reformulation of the problem,
where instead of working with the graph hypersurfaces $X_\Gamma$ defined 
by the vanishing of the graph polynomial $\Psi_\Gamma$, one works with the complement of a fixed determinant hypersurface in an affine space of matrices.
This allows us to reduce the problem to one that only depends on the number
of loops of the graph, at least for the class of graphs satisfying the 
combinatorial condition discussed in \S\ref{DetSec} (for example, 
$3$-vertex connected planar graphs with $\ell$ loops). We propose
specific questions in terms of $\ell$ alone, in \S\ref{Tmq}; these questions 
may be appreciated independently of our motivation, as they do not refer
directly to Feynman graphs.
We hope that these reformulations
might help to connect the problem to other interesting
questions, such as the geometry of intersections of Schubert cells and
Kazhdan--Lusztig theory.

\section{Feynman parameters and determinants}\label{DetSec}

With the notation as above, 
for a given Feynman graph $\Gamma$, the graph hypersurface $X_\Gamma$
is defined as the locus of zeros
\begin{equation}\label{XGammaProj}
X_\Gamma =\{ t=(t_1:\ldots :t_n)\in \P^{n-1} \,| \, \Psi_\Gamma(t)=0 \}.
\end{equation}
Indeed, $\Psi_\Gamma$ is homogeneous of degree $\ell$, hence 
it defines a hypersurface of degree $\ell$ in the projective space $\P^{n-1}$.
We will also consider the affine cone on $X_\Gamma$, namely the affine
hypersurface
\begin{equation}\label{hatXGamma}
\hat X_\Gamma =\{ t\in \A^n \,|\, \Psi_\Gamma(t)=0 \}.
\end{equation}

The question of whether the Feynman integral is a period of a mixed Tate 
motive can be approached (modulo the divergence problem) as a question
on whether the relative cohomology
\begin{equation}\label{relcohomXGamma}
H^{n-1}(\P^{n-1}\smallsetminus X_\Gamma, \Sigma_n \smallsetminus (\Sigma_n\cap X_\Gamma))
\end{equation}
is a realization of a mixed Tate motive, where $\Sigma_n$ is the algebraic simplex 
\begin{equation}\label{Sigman}
\Sigma_n =\{ t \in \P^{n-1} \,|\, \prod_i t_i =0 \},
\end{equation}
\ie the union of the coordinate hyperplanes containing the boundary of the domain of
integration $\partial \sigma_n \subset \Sigma_n$. See for instance \cite{BEK}, \cite{Blo}.

Although working in the projective setting is very natural (see \cite{BEK}), there are 
several reasons why it may be preferable to consider affine hypersurfaces:
\begin{itemize}
\item Only in the limit cases of a massless theory or of zero external momenta in the
massive case does the parameteric Feynman integral involve the quotient of two homogeneous
polynomial (\cite{BjDr}, \S 18).
\item The deformations of the $\phi^4$ quantum field theory to noncommutative spacetime,
which has been the focus of much recent research (see \eg \cite{GrWu}), 
shows that, even in the massless case
the graph polynomials $\Psi_\Gamma$ and $P_\Gamma$ are no longer homogeneous in
the noncommutative setting and only in the limit commutative case they recover this property (see 
\cite{GuRi}, \cite{KrRiTaWa}).
\item As shown in \cite{AluMa2}, in the affine setting the graph hypersurface complement
satisfies a multiplicative property over disjoint unions of graphs that makes it possible to
define algebro-geometric and motivic Feynman rules. 
\end{itemize}
For these various reasons, in this paper we primarily work in the affine rather than in 
the projective setting.

In the present paper, we approach the problem in a different way, where instead of
working with the hypersurface $\hat X_\Gamma$, we map the 
Feynman integral computation and the graph hypersurface in a larger hypersurface 
$\hat \cD_\ell$ inside a larger affine space, so that we will be dealing with a relative 
cohomology replacing \eqref{relcohomXGamma} where the ambient space (the
hypersurface complement) only depends on the number of loops in the graph.

\subsection{Determinant hypersurfaces and graph polynomials}\label{Dhagp}

We now show that all the affine varieties $\hat X_\Gamma$, for fixed 
number of loops $\ell$, map naturally to a larger hypersurface in a 
larger affine space, by realizing the polynomial $\Psi_\Gamma$ for the given
graph as a pullback of a fixed polynomial $\Psi_\ell$ in $\ell^2$-variables.

Recall that the determinant hypersurface $\cD_\ell$ is defined in the
following way. Let $k[x_{kr}, k,r=1,\ldots,\ell]$ be the polynomial
ring in $\ell^2$ variables and set
\begin{equation}\label{Detell}
\cD_\ell = \{ x=(x_{kr})\,|\, \det(x)=0 \}.
\end{equation}
Since the determinant is a homogeneous polynomial $\Psi_\ell$, this in particular
also defines a projective hypersurface in $\P^{\ell^2 -1}$.  We will however mostly
concentrate on the affine hypersurface $\hat\cD_\ell\subset \A^{\ell^2}$ 
defined by the vanishing of the determinant, \ie the cone in $\A^{\ell^2}$ 
of the projective hypersurface $\cD_\ell$.

Suppose given any Feynman graph $\Gamma$ with
$b_1(\Gamma)=\ell$, and with $\# E_{int}(\Gamma)=n$. 
It is well known (see \eg \S 18 of \cite{BjDr}) that the graph
polynomial $\Psi_\Gamma(t)$ can be equivalently written in the form of
a determinant
\begin{equation}\label{PsiDet}
\Psi_\Gamma(t)=\det M_\Gamma(t)
\end{equation}
of an $\ell\times \ell$-matrix
\begin{equation}\label{MGamma}
(M_\Gamma)_{kr}(t)=\sum_{i=1}^n t_i \eta_{ik} \eta_{ir},
\end{equation} 
where the $n \times \ell$-matrix $\eta_{ik}$ is defined in terms of
the edges $e_i \in E(\Gamma)$ and   
a choice of a basis for the first homology group, $l_k \in
H_1(\Gamma,\Z)$,  with $k=1,\ldots, \ell=b_1(\Gamma)$, by setting 
\begin{equation}\label{etaik}
\eta_{ik}=\left\{ \begin{array}{rl} +1 & \text{edge $e_i\in$ loop
$l_k$, same orientation} \\[2mm] -1 & \text{edge $e_i\in$ loop
$l_k$, reverse orientation} \\[2mm] 0 & \text{otherwise.} \end{array}\right.
\end{equation}
The determinant $\det M_\Gamma(t)$ is independent both of the choice of
orientation on the edges of the graph and of the choice of generators for
$H_1(\Gamma,\Z)$.

The expression of the matrix $M_\Gamma(t)$ defines a linear map  $\tau:\A^n \to 
\A^{\ell^2}$ of the form 
\begin{equation}\label{xmap}
\tau=\tau_\Gamma: \A^n \to \A^{\ell^2}, \ \ \ \ \tau(t_1,\ldots,t_n)= 
\sum_i t_i \eta_{ki}\eta_{ir}. 
\end{equation}
We can write this equivalently in the shorter form 
\begin{equation}\label{taumap}
\tau = \eta^\dag \Lambda \eta,
\end{equation}
where $\Lambda$ is the diagonal $n\times n$-matrix with $t_1,\ldots,t_n$ 
as diagonal entries, and $\eta=\eta_\Gamma$ is the matrix \eqref{etaik}.

Then by construction we have that $\hat X_\Gamma = \tau^{-1}(\hat \cD_\ell)$,
from \eqref{PsiDet}.
We formalize this as follows:

\begin{lem}\label{XGammaDell}
Let $\Gamma$ be a Feynman graph with $n$ internal edges and $\ell$ loops.
Let $\hat X_\Gamma \subset \A^n$ denote the affine cone on the projective 
hypersurface $X_\Gamma \subset \P^{n-1}$. Then 
\begin{equation}\label{hatXGammaDell}
\hat X_\Gamma =\tau^{-1}(\hat \cD_\ell),
\end{equation}
where $\tau: \A^n \to \A^{\ell^2}$ is a linear map depending on $\Gamma$.
\end{lem}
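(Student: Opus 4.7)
The plan is to verify the set-theoretic equality directly from the definitions, with the matrix identity \eqref{PsiDet} as the only nontrivial input.

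First I would check that the map $\tau$ defined in \eqref{xmap} is well defined as a linear map $\A^n \to \A^{\ell^2}$. Writing coordinates $x_{kr}$ on the target, the $(k,r)$-entry of $\tau(t)$ is $\sum_{i=1}^n t_i \eta_{ik}\eta_{ir}$, which is a homogeneous linear form in $t_1,\dots,t_n$ whose integer coefficients depend only on the matrix $\eta_\Gamma$ of \eqref{etaik}. In the compact form \eqref{taumap}, $\tau(t)=\eta^\dagger \Lambda(t)\eta$, and the dependence on $t$ enters only through the diagonal matrix $\Lambda(t)$; linearity is then immediate, and $\tau$ manifestly depends only on the combinatorial data of $\Gamma$ (up to the choices of edge orientation and $H_1$-basis).

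Next, I would invoke the identity \eqref{PsiDet}, $\Psi_\Gamma(t)=\det M_\Gamma(t)$, which is the classical reformulation cited from \S 18 of \cite{BjDr}. The standard route is Cauchy--Binet: expanding $\det(\eta^\dagger \Lambda \eta)$ as a sum over $\ell$-element subsets $S\subset E_{int}(\Gamma)$ yields $\sum_S (\det \eta_S)^2\,\prod_{i\in S} t_i$, and one then identifies the subsets $S$ with $\det \eta_S=\pm 1$ as exactly the complements of spanning trees of $\Gamma$, matching \eqref{PsiGamma}. This identification is the usual matroid-theoretic fact that the columns of $\eta$ form a representation of the cycle matroid of $\Gamma$, so $\det \eta_S$ is nonzero precisely when $E_{int}(\Gamma)\setminus S$ is a spanning tree, in which case it is $\pm 1$ since $\eta$ is totally unimodular. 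This is the one point at which something substantive happens; everything else is formal.

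Combining these ingredients, the lemma reduces to a tautology: $t \in \tau^{-1}(\hat\cD_\ell)$ iff $\det \tau(t)=\det M_\Gamma(t)=0$ iff $\Psi_\Gamma(t)=0$ iff $t\in \hat X_\Gamma$. The only genuine obstacle is the determinantal identity \eqref{PsiDet}, which the paper treats as classical; once one accepts it, the proof is bookkeeping.
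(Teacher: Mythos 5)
Your proof is correct and follows the same route the paper takes: the lemma is an immediate consequence of the determinantal identity \eqref{PsiDet}, which the paper cites as classical, combined with the definitions of $\tau$ and $\hat\cD_\ell$. The Cauchy--Binet sketch you include is a nice spelling-out of why \eqref{PsiDet} holds, but the paper (and your proof) treats it as known background, so this is not a divergence of approach.
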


The next lemma, which follows directly from the definitions, details some
of the properties of the map $\tau$ introduced above that we will be using 
in the following.

\begin{lem}\label{mapstaui}
The matrix of $\tau$, $M_\Gamma(t)=\eta^\dag \Lambda \eta$,
has the following properties.
\begin{itemize}
\item For $i\ne j$, the corresponding entry is the sum of
$\pm t_k$, where the $t_k$ correspond to the edges common to
the $i$-th and $j$-th loop, and the sign is $+1$ if the orientations
of the edges both agree or both disagree with the loop orientations,
and $-1$ otherwise.
\item For $i=j$, the entry is the sum of the variables $t_k$ corresponding
to the edges in the $i$-th loop (all taken with sign~$+$).
\end{itemize}
Now consider a specific edge $e$, and let $t_e$ be the corresponding 
variable. Then
\begin{itemize}
\item The variable $t_e$ appears in $\eta^\dag \Lambda\eta$ 
if and only if $e$ is part of at least one loop.
\item If $e$ belongs to a single loop $\ell_i$, then $t_e$ only appears in the
diagonal entry $(i,i)$, added to the variables corresponding to the other
edges forming the loop $\ell_i$.
\item If there are two loops $\ell_i$, $\ell_j$ containing $e$, and not having
any other edge in common, then the $\pm t_e$ appears by itself at the entries
$(i,j)$ and $(j,i)$ in the matrix $\eta^\dag \Lambda\eta$.
\end{itemize}
\end{lem}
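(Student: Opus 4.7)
The plan is to unpack the definition $(M_\Gamma)_{kr} = (\eta^\dag \Lambda \eta)_{kr} = \sum_{i=1}^n t_i\,\eta_{ik}\eta_{ir}$ and analyze which indices $i$ contribute to each entry, using \eqref{etaik}. Since $\Lambda$ is diagonal, everything reduces to a bookkeeping computation: a summand $t_i\eta_{ik}\eta_{ir}$ is nonzero precisely when the edge $e_i$ belongs to both loops $\ell_k$ and $\ell_r$ (with the appropriate orientations making $\eta_{ik}$ and $\eta_{ir}$ each nonzero).

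First I would handle the off-diagonal case $k \neq r$. Here $\eta_{ik}\eta_{ir} \neq 0$ iff $e_i$ lies in both loops $\ell_k$ and $\ell_r$, and the sign is the product $\eta_{ik}\eta_{ir} \in \{\pm 1\}$. Reading off \eqref{etaik}, this product equals $+1$ if both orientations of $e_i$ agree, or both disagree, with the respective loop orientations, and $-1$ otherwise; this gives the first bullet. Next, for $k = r$, one has $\eta_{ik}^2 = 1$ precisely when $e_i \in \ell_k$ and $0$ otherwise, yielding the sum over edges in $\ell_k$ with all positive signs; this is the second bullet.

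The three statements about a fixed edge $e$ are then immediate consequences. The variable $t_e$ appears in some entry iff $\eta_{ek} \neq 0$ for at least one $k$, i.e.\ iff $e$ lies in at least one loop. If $e$ belongs to the unique loop $\ell_i$, then $\eta_{ek}$ is nonzero only for $k = i$, so $t_e$ contributes only to the $(i,i)$-entry, combined with the other edges of $\ell_i$. Finally, if $e$ belongs to exactly two loops $\ell_i,\ell_j$ which share no further edge, then the off-diagonal entries $(i,j)$ and $(j,i)$ receive a contribution only from the index corresponding to $e$, giving $\pm t_e$ standing alone, with the sign determined as in the first bullet.

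There is no substantive obstacle here — the lemma is essentially a translation of the matrix product formula through the combinatorial definition of $\eta$, and the only care needed is in tracking the sign conventions in \eqref{etaik}. The main thing to state cleanly is the bijection between nonzero summands in $\sum_i t_i\eta_{ik}\eta_{ir}$ and edges shared by the relevant pair of loops.
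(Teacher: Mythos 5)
Your proof is correct and is exactly the direct unpacking of $(\eta^\dag\Lambda\eta)_{kr}=\sum_i t_i\eta_{ik}\eta_{ir}$ that the paper has in mind — the paper offers no proof at all, simply noting the lemma ``follows directly from the definitions,'' and your bookkeeping supplies precisely those details. One small note: the third bullet does not assume $e$ lies in \emph{exactly} two loops, only that two loops $\ell_i,\ell_j$ containing $e$ share no other edge; your argument for the $(i,j)$ and $(j,i)$ entries works verbatim under this weaker hypothesis, so nothing is lost, but the phrase ``exactly two loops'' in your write-up should be dropped.
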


When the map $\tau$ constructed above is injective, it is possible to rephrase 
the computation of the parametric Feynman integral \eqref{Int} as a period of the
complement of the determinant hypersurface $\hat \cD_\ell\subset \A^{\ell^2}$.

\begin{lem}\label{periodDell}
Assume that the map $\tau: \A^n \to \A^{\ell^2}$ of \eqref{taumap} is injective. 
Then the integral \eqref{Int} can be rewritten in the form
\begin{equation}\label{intDell}
\int_{\tau(\sigma_n)} \frac{\cP_\Gamma(p,x)^{-n+D\ell/2} 
\,\omega_\Gamma(x)}{\det(x)^{-n +(\ell+1)D/2}},
\end{equation}
where $\cP_\Gamma(p,x)$ is a homogeneous polynomial on $\A^{\ell^2}$ whose
restriction to the image of $\A^n$ under the map $\tau$
agrees with $P_\Gamma(p,t)$, and $\omega_\Gamma$ is the induced volume form.
\end{lem}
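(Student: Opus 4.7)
The proof proceeds by a direct linear change of variables. Since $\tau$ is linear and injective, it restricts to a linear isomorphism from $\A^n$ onto its image $V:=\tau(\A^n)$, an $n$-dimensional linear subspace of $\A^{\ell^2}$, and in particular restricts to a bijection $\sigma_n\to\tau(\sigma_n)$ between the two domains of integration. The plan is to rewrite each factor of the integrand of \eqref{Int} in terms of $x=\tau(t)$ and then invoke the linear change-of-variables formula.

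By Lemma~\ref{XGammaDell} (specifically, by the identity $\Psi_\Gamma(t)=\det M_\Gamma(t)=\det(\tau(t))$ used in its proof), the factor $\Psi_\Gamma(t)^{-n+(\ell+1)D/2}$ is simply the pullback of $\det(x)^{-n+(\ell+1)D/2}$ under $\tau$. For the numerator, I would construct $\cP_\Gamma(p,x)$ as follows. The linear inverse $\tau^{-1}|_V:V\to\A^n$ expresses each coordinate $t_i$ as a linear functional on $V$; extending each such functional (non-uniquely) to a linear functional $\tilde t_i$ on all of $\A^{\ell^2}$, set
$$\cP_\Gamma(p,x):=P_\Gamma\bigl(p,\tilde t_1(x),\ldots,\tilde t_n(x)\bigr).$$
Since $P_\Gamma$ is homogeneous of degree $\ell+1$ in $t$ and each $\tilde t_i$ is linear in $x$, $\cP_\Gamma(p,x)$ is homogeneous of degree $\ell+1$ in $x$, and by construction $\cP_\Gamma(p,\tau(t))=P_\Gamma(p,t)$; moreover the restriction $\cP_\Gamma|_V$ is independent of the choice of extensions. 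Finally define the volume form $\omega_\Gamma$ on $V$ to be the pushforward $\tau_*\omega_n$, i.e.\ the top form on $V$ determined by the linear parametrization $\tau:\A^n\xrightarrow{\sim}V$. With these choices, the linear change-of-variables formula applied to \eqref{Int} produces \eqref{intDell} term by term.

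The computational content is minimal — it is a change of variables under a linear isomorphism — and the only potential obstacle is conceptual bookkeeping: the polynomial $\cP_\Gamma$ is canonical only upon restriction to $V$, and one should check that this ambiguity is harmless. Since both $\tau(\sigma_n)\subset V$ and the form $\omega_\Gamma$ are supported on $V$, the integrand in \eqref{intDell} depends only on $\cP_\Gamma|_V$, which is uniquely determined by $P_\Gamma$ and $\tau$; thus the integral is well-defined and equals the left-hand side of \eqref{Int}.
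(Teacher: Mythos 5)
Your proposal is correct and follows essentially the same route as the paper: extend $P_\Gamma$ (non-uniquely, but harmlessly) to a homogeneous polynomial $\cP_\Gamma$ on $\A^{\ell^2}$, transport $\sigma_n$ and $\omega_n$ along the linear embedding $\tau$, and observe that $\Psi_\Gamma = \det\circ\tau$ converts the denominator. The only cosmetic difference is that you define $\omega_\Gamma$ directly as the pushforward $\tau_*\omega_n$, whereas the paper specifies it implicitly by the condition $\omega_\Gamma(x)\wedge\langle\xi_\Gamma,dx\rangle=\omega_{\ell^2}$ for a frame $\xi_\Gamma$ complementary to $\tau(\A^n)$; these describe the same form.
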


\proof It is possible to regard the
polynomial $P_\Gamma(p,t)$ as the restriction to $\A^n$ of a homogeneous
polynomial $\cP_\Gamma(p,x)$ defined on all of
$\A^{\ell^2}$. Clearly, such $\cP_\Gamma(p,x)$ will not be unique,
but different choices of $\cP_\Gamma(p,x)$ will not affect the
integral calculation, which all happens inside the linear subspace
$\A^n$. The simplex $\sigma_n$ is also linearly embedded inside 
$\A^{\ell^2}$, and we denote its image by
$\tau(\sigma_n)$. The volume form $\omega_n$ can also be
identified, under such a choice of coordinates in $\A^{\ell^2}$ 
with a form $\omega_\Gamma(x)$ such that
$$ \omega_\Gamma(x)\wedge \langle \xi_\Gamma, dx \rangle =
\omega_{\ell^2}, $$
with $\xi_\Gamma$ the $(\ell^2-n)$-frame associated to the linear
subspace $\tau(\A^n)\subset \A^{\ell^2}$ and 
$$ \langle \xi_\Gamma, dx \rangle = \langle \xi_1, dx
\rangle\wedge \cdots \wedge \langle \xi_{\ell^2-n}, dx \rangle. $$
\endproof

Notice in particular that if the map $\tau$ is injective then one has a well defined
map $\P^{n-1} \to \P^{\ell^2-1}$, which is otherwise not everywhere
defined. 

We are interested in the following, heuristically formulated, consequence of Lemma~\ref{periodDell}.

\begin{claim}\label{pairmot}
Assume that the map $\tau: \A^n \to \A^{\ell^2}$ of \eqref{taumap} is injective. 
Then the complexity of Feynman integrals corresponding to the graph $\Gamma$
is controlled by the motive $\m(\A^{\ell^2}\smallsetminus 
\hat\cD_\ell, \hat\Sigma_\Gamma \smallsetminus (\hat\cD_\ell\cap \hat\Sigma_\Gamma))$,
where $\hat\Sigma_\Gamma$ is a normal crossings divisor in $\A^{\ell^2}$
such that $\tau(\partial \sigma_n)\subset \hat\Sigma_\Gamma$.
\end{claim}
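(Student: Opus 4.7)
The plan is to reinterpret the integral identity of Lemma~\ref{periodDell} in the language of algebraic de Rham cohomology and periods. The integrand in \eqref{intDell} is a rational algebraic differential form on $\A^{\ell^2}$ whose only singularities lie along the determinant hypersurface $\hat\cD_\ell$: the factor $\det(x)^{-n+(\ell+1)D/2}$ contributes a pole along $\hat\cD_\ell$, while $\cP_\Gamma(p,x)$, being a polynomial, is regular on all of $\A^{\ell^2}$, and the assumption $n\le D\ell/2$ makes the exponent of $\cP_\Gamma$ nonnegative so that no further poles appear. Hence the integrand defines a class in the algebraic de Rham cohomology of $\A^{\ell^2}\smallsetminus\hat\cD_\ell$.

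Next I would analyze the chain of integration. Because $\tau$ is injective and linear, $\tau(\sigma_n)$ is a topological $n$-simplex linearly embedded in $\A^{\ell^2}$, and its boundary $\tau(\partial\sigma_n)$ is the union of the $\tau$-images of the $n$ coordinate-hyperplane faces of~$\sigma_n$. The step is then to construct a normal crossings divisor $\hat\Sigma_\Gamma\subset\A^{\ell^2}$ containing $\tau(\partial\sigma_n)$. The natural candidate is assembled from affine hyperplanes in $\A^{\ell^2}$ cut out by the linear conditions that, pulled back under $\tau$, define the loci $\{t_e=0\}\subset\A^n$ for $e\in E_{int}(\Gamma)$. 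Lemma~\ref{mapstaui} describes precisely which entries of the matrix $\eta^\dag\Lambda\eta$ involve each variable $t_e$, and therefore dictates the explicit form of these hyperplanes. The transparent combinatorial condition on $\Gamma$ announced at the beginning of \S\ref{PeriodSec} is exactly what guarantees that the resulting arrangement is (or can be refined to) a normal crossings divisor $\hat\Sigma_\Gamma$.

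With these ingredients in place, the pair
\[
\bigl(\A^{\ell^2}\smallsetminus\hat\cD_\ell,\ \hat\Sigma_\Gamma\smallsetminus(\hat\cD_\ell\cap\hat\Sigma_\Gamma)\bigr)
\]
has well-defined relative de Rham and Betti cohomologies, and hence defines a motive $\m(\A^{\ell^2}\smallsetminus\hat\cD_\ell,\ \hat\Sigma_\Gamma\smallsetminus(\hat\cD_\ell\cap\hat\Sigma_\Gamma))$ in the category of mixed motives over $\Q$. The integral \eqref{intDell} is the pairing of the algebraic form just identified with the relative cycle $[\tau(\sigma_n)]$, and therefore represents, by construction, a period of this motive. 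In particular, if the motive is mixed Tate then so is the period, which is the content of the claim; this justifies the assertion that the motive ``controls the complexity'' of the Feynman integral.

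The principal obstacle, and the reason the claim is only heuristically formulated, is twofold. First, verifying that $\hat\Sigma_\Gamma$ really is a normal crossings divisor requires careful use of Lemma~\ref{mapstaui} to track how distinct hyperplanes $\{t_e=0\}$ map under $\tau$; this is exactly the role of the combinatorial hypothesis discussed in \S\ref{PeriodSec} and is the main technical content hidden behind the word ``suitable'' in the construction. Second, whenever $\tau(\sigma_n)\cap\hat\cD_\ell\ne\emptyset$ the integral is divergent, and the strict identification of the integral as a period of this relative cohomology holds only in the convergent case; in the divergent case one must first perform the regularization procedure of \S\ref{Divs} and verify its compatibility with the motivic framework. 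Modulo these two issues, the claim follows directly from Lemma~\ref{periodDell} together with the standard dictionary between integrals of algebraic forms over relative chains and periods of realizations of mixed motives.
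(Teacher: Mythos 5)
Your proposal is correct and tracks the paper's own treatment of this claim essentially step for step: the paper likewise derives it heuristically from Lemma~\ref{periodDell} (rewriting the integral over $\tau(\sigma_n)$ with a form having poles only along $\hat\cD_\ell$), constructs $\hat\Sigma_\Gamma$ in Lemma~\ref{tauinjXi} as the union of the $n$ codimension-one linear subspaces pulling back to the coordinate hyperplanes $\{t_e=0\}$ via Lemma~\ref{mapstaui}, and then invokes the standard period--motive dictionary, deferring the divergence issue to~\S\ref{Divs}. The only substantive caveat you should make explicit is that the normal-crossings property is automatic here because the components are linear subspaces of codimension one (so no ``refinement'' is ever needed), which is exactly what Lemma~\ref{tauinjXi} records.
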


The explicit construction of the normal crossings divisor $\hat\Sigma_\Gamma$ is given in
Lemma \ref{tauinjXi} below.
We will further improve on this observation by reformulating it in a way that
will only depend on the number of loops $\ell$ of $\Gamma$ and on its
genus, and not on
the specific graph $\Gamma$. To this purpose, we will determine subsets of $\A^{\ell^2}$
which will contain the components of the image $\tau(\partial\sigma_n)$ of the
boundary of the simplex in $\A^n$, independently of $\Gamma$ (see \S \ref{DoG}).

In any case, this type of results motivates us to determine
conditions on the Feynman graph $\Gamma$ which
ensure that the corresponding map $\tau: \A^n \to \A^{\ell^2}$ is injective.

\section{Graph theoretic conditions for embeddings}\label{Grcfe}

\subsection{Injectivity of $\tau$}
In the following, we denote by $\tau_i$ the composition of the map $\tau$
of \eqref{taumap} with the projection to the $i$-th row of the matrix 
$\eta^\dag \Lambda \eta$, viewed as
a map of the variables corresponding only to the edges that belong 
to the $i$-th loop in the chosen bases of the first homology of the graph $\Gamma$.

We first make the following simple observation. 

\begin{lem}\label{tauitauinj}
If $\tau_i$ is injective for $i$ ranging over a set of loops such that every
edge of $\Gamma$ is part of a loop in that set, then $\tau$ is itself injective.
\end{lem}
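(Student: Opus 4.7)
The plan is to reduce the injectivity of the full map $\tau$ to a decomposition-then-reconstruction argument, using the observation that each row of the matrix $M_\Gamma(t)=\eta^\dag\Lambda\eta$ depends only on variables corresponding to edges in the relevant loop. This fact is essentially Lemma~\ref{mapstaui}: since the $(i,j)$-entry of $M_\Gamma(t)$ is a signed sum of $t_e$ for edges $e$ common to loops $\ell_i$ and $\ell_j$ (with the diagonal $(i,i)$-entry summing all edges in $\ell_i$), every entry in row $i$ involves only edges lying in $\ell_i$. Thus the map $\tau_i$ is well-defined on the coordinate subspace $\A^{E(\ell_i)}\subset \A^n$ spanned by the variables $t_e$, $e\in E(\ell_i)$, where $E(\ell_i)$ denotes the edge set of the loop~$\ell_i$.

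Given this set-up, I would argue as follows. Suppose $\tau(t)=\tau(t')$ for two points $t,t'\in\A^n$. For any loop $\ell_i$ in the given family, the equality of the $i$-th rows of $M_\Gamma(t)$ and $M_\Gamma(t')$ gives $\tau_i(t|_{E(\ell_i)})=\tau_i(t'|_{E(\ell_i)})$, where $t|_{E(\ell_i)}$ denotes the restriction of $t$ to the coordinates indexed by $E(\ell_i)$. By the assumed injectivity of $\tau_i$, this forces $t_e=t'_e$ for every $e\in E(\ell_i)$. Since the chosen family of loops covers every edge of $\Gamma$, every coordinate $t_e$ is pinned down in this way, so $t=t'$. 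Hence $\tau$ is injective.

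There is essentially no obstacle here: the lemma is a formal consequence of the block structure of $M_\Gamma(t)$ recorded in Lemma~\ref{mapstaui} together with the covering hypothesis. The only point that deserves a line of care in the write-up is making explicit that $\tau_i$ is truly a function of the variables indexed by $E(\ell_i)$ alone (so that the injectivity statement for $\tau_i$ has the meaning needed in the argument); this is the content of the first paragraph above and is immediate from the explicit description of the entries of $\eta^\dag\Lambda\eta$.
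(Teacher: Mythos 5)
Your argument is correct and is essentially identical to the paper's: both rest on the observation (recorded in Lemma~\ref{mapstaui}) that the $i$-th row of $M_\Gamma(t)$ involves only the variables $t_e$ for edges $e$ of the $i$-th loop, so injectivity of each $\tau_i$ pins down those coordinates, and the covering hypothesis pins down all of them. The only cosmetic difference is that the paper phrases it via the kernel of the linear map $\tau$ while you compare two preimages directly, which is the same thing.
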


\proof Let $(t_1,\dots,t_n)=(c_1,\dots,c_n)$ be in the kernel of $\tau$. Since 
each $(i,j)$ entry in the target matrix is a combination of edges in the 
$i$-th loop, the map $\tau_i$ must send to zero the tuple of $c_j$'s
corresponding to the edges in the $i$-th loop. Since we are assuming
$\tau_i$ to be injective, that tuple is the zero-tuple. Since every edge
is in some loop for which $\tau_i$ is injective, it follows that every $c_j$ 
is zero, as needed.
\endproof

The properties detailed in Lemma~\ref{mapstaui} immediately provide a 
sufficient condition for the maps $\tau_i$ to be injective.

\begin{lem}\label{injtaui}
The map $\tau_i$ is injective if the following conditions are satisfied:
\begin{itemize} 
\item For every edge $e$ of the $i$-th loop, there is another loop having 
only $e$ in common with the $i$-th loop, and
\item The $i$-th loop has at most one edge not in common with any
other loop.
\end{itemize}
\end{lem}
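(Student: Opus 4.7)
The plan is to use Lemma~\ref{mapstaui} to recover each individual variable $t_e$ (for $e$ an edge of the $i$-th loop) directly from a specific entry of the $i$-th row of $M_\Gamma(t)=\eta^\dag\Lambda\eta$. Since $\tau_i$ is a linear map whose source has coordinates indexed by the edges of $\ell_i$, it suffices to show that the only preimage of $0$ is $0$.

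First I would handle the edges of $\ell_i$ that are shared with some other loop. For any such edge $e$, the first hypothesis guarantees a loop $\ell_j$ with $\ell_i\cap\ell_j=\{e\}$; by the third bullet of Lemma~\ref{mapstaui}, the $(i,j)$-entry of $\eta^\dag\Lambda\eta$ is then exactly $\pm t_e$, with no contribution from any other variable. Consequently, if a tuple is in the kernel of $\tau_i$, the corresponding $t_e$ must vanish. This argument applies to every edge of $\ell_i$ except possibly one, namely the (at most) one edge $e_0$ of $\ell_i$ allowed by the second hypothesis not to be shared with any other loop.

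To deal with the possibly remaining variable $t_{e_0}$, I would use the diagonal entry: by the second bullet of Lemma~\ref{mapstaui}, the $(i,i)$-entry of $\eta^\dag\Lambda\eta$ equals $\sum_{e\in\ell_i}t_e$, in which $t_{e_0}$ appears. If our tuple lies in the kernel of $\tau_i$, this sum vanishes; combined with $t_e=0$ for all other $e\in\ell_i$ established in the previous step, this forces $t_{e_0}=0$ as well. Hence $\ker \tau_i = 0$.

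There is no real obstacle to expect: the essential content has already been packaged into Lemma~\ref{mapstaui}, and the proof amounts to choosing, for each variable in the domain of $\tau_i$, the appropriate entry of the $i$-th row from which to read it off, with the diagonal entry serving as the one-variable cleanup for the potentially exceptional edge $e_0$. The only point worth being careful about is ensuring that the two hypotheses together cover every edge of $\ell_i$ exactly once, which they do by construction.
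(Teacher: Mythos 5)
Your proof is correct and is essentially the same argument the paper gives: the paper writes out the explicit form of $\tau_i$ (up to rearranging entries) as $(t_{i_1},\dots,t_{i_v}) \mapsto (t_{i_1}+\cdots+t_{i_v}, \pm t_{i_1},\dots)$ and reads off injectivity, which is just a reorganization of your ``recover each $t_e$ from the $(i,j)$ entry, then mop up the last one from the diagonal'' argument. Both rest on the same two observations extracted from Lemma~\ref{mapstaui}, so this is the same proof.
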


\proof  In this situation, all but at most one edge variable 
appear by themselves as an entry of the $i$-th row, and the possible last
remaining variable appears summed together with the other variables.
More explicitly, if $t_{i_1},\dots,t_{i_v}$ are the variables 
corresponding to the edges of a loop $\ell_i$, up to rearranging the entries in the
corresponding row of $\eta^\dag \Lambda\eta$ 
and neglecting other entries,  the map $\tau_i$ is given by
\[
(t_{i_1},\dots,t_{i_v}) \mapsto (t_{i_1}+\cdots+t_{i_v},
\pm t_{i_1},\dots,\pm t_{i_v})
\]
if $\ell_i$ has no edge not in common with any other loop, and 
\[
(t_{i_1},\dots,t_{i_v}) \mapsto (t_{i_1}+\cdots+t_{i_v},
\pm t_{i_1},\dots,\pm t_{i_{v-1}})
\]
if $\ell_i$ has a single edge $t_v$ not in common with any other loop.
In either case the map $\tau_i$ is injective, as claimed.
\endproof

Now we need a sufficiently natural combinatorial condition on the graph $\Gamma$ 
that ensures that the conditions of Lemma~\ref{injtaui} and 
Lemma~\ref{tauitauinj} are fulfilled. 
We first recall some useful facts about graphs and embeddings of graphs 
on surfaces which we need in the following.

Every (finite) graph $\Gamma$
may be embedded in a compact orientable surface of 
finite genus. The minimum genus of an orientable surface in which 
$\Gamma$ may be embedded is the {\em genus\/} of $\Gamma$. 
Thus, $\Gamma$ is planar if and only if it may be embedded in a sphere, 
if and only if its genus is~$0$. 

\begin{defn}\label{2cell}
An embedding of a graph $\Gamma$ in an orientable surface $S$ is 
a {\em 2-cell embedding} if the complement of $\Gamma$ in $S$
is homeomorphic to a union of open 2-cells (the {\em faces\/}, or {\em regions\/}
determined by the embedding). An embedding of $\Gamma$ in $S$ is a 
{\em closed 2-cell embedding} if the closure of every face is a disk.
\end{defn}

It is known that an embedding of a connected graph is minimal genus  if and only 
if it is a 2-cell embedding (\cite{MoTho}, Proposition~3.4.1 and Theorem~3.2.4). 
We discuss below conditions on the existence of {\em closed} 
2-cell embeddings, \cf \cite{MoTho}, \S 5.5.

For our purposes, the advantage of having a closed 
2-cell embedding for a graph $\Gamma$ is
that the faces of such an embedding determine a choice of loops of $\Gamma$, by
taking the boundaries of the 2-cells of the embedding together with a basis of generators 
for the homology of the Riemann surface in which the graph is embedded.

\begin{lem}\label{2cellH1}
A closed 2-cell embedding $\iota:\Gamma \to S$ of a connected graph $\Gamma$ 
on a surface of (minimal) genus $g$, together with the 
choice of a face of the embedding 
and a basis for the homology $H_1(S,\Z)$ determine
a basis of $H_1(\Gamma,\Z)$ given by $2g+f-1$ loops, 
where $f$ is the number of faces of the embedding.
\end{lem}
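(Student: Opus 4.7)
The plan is to use cellular homology of the ambient surface $S$, exploiting that a closed 2-cell embedding $\iota : \Gamma \to S$ endows $S$ with a CW-structure whose 1-skeleton is $\Gamma$ and whose 2-cells are the $f$ faces of the embedding, each attached along a closed loop in $\Gamma$.

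First I would verify that the count is right. Since the embedding is minimal genus (hence 2-cell by Proposition~3.4.1 of \cite{MoTho}) and the closed 2-cell condition ensures each face boundary is a genuine closed loop, Euler's formula gives $V-E+f = 2-2g$, hence
\begin{equation*}
b_1(\Gamma) \;=\; E-V+1 \;=\; 2g+f-1,
\end{equation*}
so the candidate set has the correct cardinality.

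Next I would analyze the cellular chain complex $0 \to C_2 \xrightarrow{\partial_2} C_1 \xrightarrow{\partial_1} C_0 \to 0$ of $S$. Since $\Gamma$ has no 2-cells as an abstract CW complex, $H_1(\Gamma,\Z) = \ker(\partial_1) = Z_1$; and since $S$ is a closed orientable surface of genus $g$,
\begin{equation*}
H_2(S,\Z) = \ker(\partial_2) = \Z, \qquad H_1(S,\Z) = \ker(\partial_1)/\mathrm{im}(\partial_2) = \Z^{2g}.
\end{equation*}
The key fact is that $\ker(\partial_2)$ is the rank-one subgroup generated by the fundamental class $\sum \pm F_i$ (each edge lies in exactly two faces and appears with opposite signs in the oriented boundary), so $\mathrm{im}(\partial_2)\subset H_1(\Gamma,\Z)$ is free of rank $f-1$ and is generated by the boundary loops of any $f-1$ of the $f$ faces of the embedding (this is exactly what the choice of a distinguished face accomplishes).

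From the above we extract a short exact sequence
\begin{equation*}
0 \to \mathrm{im}(\partial_2) \to H_1(\Gamma,\Z) \to H_1(S,\Z) \to 0,
\end{equation*}
which splits because $H_1(S,\Z) = \Z^{2g}$ is free. Choosing representatives in $\Gamma$ for a prescribed basis of $H_1(S,\Z)$ (possible because $\iota_\ast : H_1(\Gamma,\Z)\to H_1(S,\Z)$ is surjective by the previous step), the resulting $2g$ loops together with the $f-1$ chosen face boundaries yield a basis of $H_1(\Gamma,\Z)$. The main obstacle is the verification that $\ker(\partial_2)$ is exactly rank one, i.e.\ the single relation among face boundaries in $C_1$; this is where the \emph{closed} 2-cell condition is essential, since it guarantees that each 2-cell of $S$ is attached via an actual cycle in $\Gamma$ and that the cellular boundary formula for the fundamental class applies without modification.
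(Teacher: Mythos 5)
Your proof is correct and follows the same underlying strategy as the paper's: count the rank via Euler's formula, observe that the $f$ face boundaries satisfy exactly one relation (so any $f-1$ of them are independent), and supplement with $2g$ classes pulled back from $H_1(S,\Z)$. The main difference is one of rigor: the paper asserts informally that the $f-1$ face boundaries ``together with $2g$ generators of the homology of $S$, generate $H_1(\Gamma)$'' and that the rank count rules out further relations, whereas you derive this cleanly from the cellular chain complex of $S$, identifying $\ker(\partial_2)\cong\Z$ as the fundamental class, $\mathrm{im}(\partial_2)\cong\Z^{f-1}$, and the split short exact sequence
\[
0 \to \mathrm{im}(\partial_2) \to H_1(\Gamma,\Z) \to H_1(S,\Z) \to 0.
\]
This buys you a genuinely airtight argument (the splitting, because $H_1(S,\Z)\cong\Z^{2g}$ is free, is exactly what's needed to justify the paper's claim that there are ``no other relations''). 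One small caveat, shared with the paper's own phrasing: the $2g$ classes lifted from $H_1(S,\Z)$ are a priori cycles in $\Gamma$ rather than single embedded loops, so the word ``loop'' in the statement should be read as ``$1$-cycle''; neither proof addresses representing these by actual loops, but this does not affect the use made of the lemma later on.
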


\proof
Orient (arbitrarily) the edges of $\Gamma$ and the faces, and 
then add the edges on the boundary
of each face with sign determined by the orientations. 
The fact that the closure of each face is a 
2-disk guarantees that the boundary is null-homotopic. 
This produces a number of loops equal to the number $f$ of faces. 
It is clear that these $f$ loops are {\em not\/} independent: the
sum of any $f-1$ of them must equal the remaining one, up to
sign. Any $f-1$ loops, however, will be independent
in $H_1(\Gamma)$. Indeed, these $f-1$ loops, together with $2g$
generators of the homology of $S$, generate $H_1(\Gamma)$. 
The homology group
$H_1(\Gamma)$ has rank $2g+f-1$, as one can see from 
the Euler characteristic formula
$$ b_0(S)-b_1(S)+b_2(S)=2-2g=\chi(S)=v-e+f=b_0(\Gamma)-b_1(\Gamma)+f =1-\ell +f, $$
so there will be no other relations.
\endproof

One refers to the chosen one among the $f$ faces as the ``external
face" and the remaining $f-1$ faces as the ``internal faces".

Thus, given a closed 2-cell embedding $\iota:\Gamma \to S$, 
we can use a basis of $H_1(\Gamma,\Z)$ costructed as in 
Lemma~\ref{2cellH1} to compute the map $\tau$ of \eqref{taumap} 
and the maps $\tau_i$ of \eqref{mapstaui}. We then have the 
following result.

\begin{lem}\label{expli1}
Assume that $\Gamma$ is closed-$2$-cell embedded in a surface.
With notation as above, assume that 
\begin{itemize}
\item any two of the $f$ faces have at most one edge in common.
\end{itemize}
Then the $f-1$ maps $\tau_i$, defined with respect to a choice of basis for
$H_1(\Gamma)$ as in Lemma~\ref{2cellH1}, are all injective.
If further
\begin{itemize}
\item every edge of $\Gamma$ is in the boundary of two of the $f$ faces, 
\end{itemize}
then $\tau$ is injective.
\end{lem}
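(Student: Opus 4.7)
The plan is to verify, for the basis of $H_1(\Gamma,\Z)$ furnished by Lemma \ref{2cellH1}, the combinatorial conditions of Lemma \ref{injtaui} (for the first statement) and Lemma \ref{tauitauinj} (for the second). Fix the closed 2-cell embedding $\iota\colon\Gamma\hookrightarrow S$, designate one face as external, and take as basis the boundaries $\ell_1,\dots,\ell_{f-1}$ of the internal faces together with lifts $\gamma_1,\dots,\gamma_{2g}$ of generators of $H_1(S,\Z)$. The first task is to pin down which edges actually contribute to $\tau_i$: since $S$ is orientable, an edge $e$ with face $i$ on both of its sides is traversed once in each direction by the boundary walk of face $i$, so $\eta_{e,i}=0$; hence the edges belonging to $\ell_i$ for the purposes of $\tau_i$ are exactly those with face $i$ on one side and some other face on the other.

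For such an edge $e$ whose opposite face is another internal face $\ell_j$, hypothesis 1 forces $E(\ell_i)\cap E(\ell_j)=\{e\}$, so by Lemma \ref{mapstaui} the $(i,j)$ entry of $\eta^\dag\Lambda\eta$ equals $\pm t_e$ and $t_e$ appears on its own in row $i$. Hypothesis 1 applied to the pair consisting of face $i$ and the external face bounds by one the number of remaining edges of $\ell_i$, namely those separating face $i$ from the external face. These are precisely the two hypotheses of Lemma \ref{injtaui}, and hence $\tau_i$ is injective. Equivalently, the off-diagonal entries of row $i$ recover the ``internal'' edge variables of $\ell_i$ individually, and the at-most-one remaining edge variable is recovered from the diagonal entry $(i,i)=\sum_{e\in E(\ell_i)} t_e$ by subtraction.

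For the second statement I would apply Lemma \ref{tauitauinj} with the index set $\{1,\dots,f-1\}$. The extra hypothesis prevents the two faces adjacent to any given edge from both being the (unique) external face, so at least one of them is internal and every edge of $\Gamma$ belongs to some $\ell_i$ with $1\le i\le f-1$. Combined with the injectivity of each $\tau_i$ just established, Lemma \ref{tauitauinj} yields the injectivity of $\tau$. The one step requiring real care is the orientability argument identifying the edges contributing to $\ell_i$ with those separating face $i$ from a different face; once this identification is in hand, the rest is a direct matching of the present combinatorial hypotheses against those of the two invoked lemmas.
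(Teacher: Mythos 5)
Your proof is correct and tracks the paper's own argument closely: both reduce to Lemma~\ref{injtaui} for the first assertion and to Lemma~\ref{tauitauinj} for the second, and both use the combinatorial hypothesis on faces in exactly the same way (an edge shared with another internal face pulls back from an off-diagonal entry; at most one edge is shared with the external face and can be recovered from the diagonal entry by subtraction). The one place where you add something is the preliminary observation that an edge bordered by face $i$ on both sides contributes $\eta_{e,i}=0$; this is correct as far as it goes, but it is vacuous here, because in a closed $2$-cell embedding the closure of every face is a disk and hence the boundary of every face is a simple cycle, so no edge can appear twice in the boundary walk of a single face. (That phenomenon is precisely what Figure~\ref{edgeone} and Proposition~\ref{2faces} are about, and the hypotheses rule it out.) Aside from that unnecessary case distinction, the argument is the same as the paper's.
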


\begin{proof}
The injectivity of the $f-1$ maps $\tau_i$ follows from Lemma~\ref{injtaui}.
If $\ell$ is a loop determined by an internal
face, the variables corresponding to edges in common between $\ell$
and any other internal loop will appear as ($\pm$) individual entries
on the row corresponding to $\ell$. Since $\ell$ has at most one
edge in common with the external region, this accounts for all but
at most one of the edges in $\ell$. By Lemma~\ref{injtaui},
the injectivity of $\tau_i$ follows. 

Finally, as shown in Lemma~\ref{tauitauinj}, the map $\tau$ is injective if 
every edge is in one of the $f-1$ loops and the $f-1$ maps $\tau_i$ are
injective. The stated condition guarantees that the edge appears in the 
loops corresponding to the faces separated by that edge. At least
one of them is internal, so that every edge is accounted for.
\end{proof}

\begin{ex}\label{graphexample}{\rm 
Consider the example of the planar graph in Figure \ref{planarex}. 
The conditions stated in Lemma~\ref{expli1} are evidently satisfied.
Edges are marked by circled numbers. The loop corresponding to
region~1 consists of edges 1, 2, 3, 4. The corresponding row of 
$\eta^t T\eta$ is
\[
(t_1+t_2+t_3+t_4,\pm t_4, \pm t_3, \pm t_2, \pm t_1)\quad.
\]
Region~2 consists of edges 4,5,6,7. Edge 7 is not in any other
internal region. The corresponding row of $\eta^\dag \Lambda \eta$ is
\[
(t_4+t_5+t_6+t_7,\pm t_4, \pm t_5, \pm t_6)\quad.
\]
These maps are injective, as claimed. Given the symmetry of
the situation, it is clear that all maps $\tau_i$ (and hence $\tau$
as well) are injective for this graph, as guaranteed by 
Lemma~\ref{expli1}.}\end{ex}

\begin{center}
\begin{figure}
\includegraphics[scale=0.9]{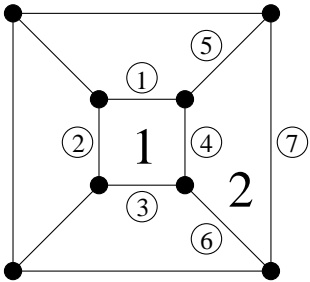}
\caption{An example satisfying Lemma~\ref{expli1}.
\label{planarex}}
\end{figure}
\end{center}

The considerations that follow will
allow us to improve on Lemma~\ref{expli1}, by showing 
that in natural situations the second condition listed in Lemma~\ref{expli1} is 
automatically satisfied.

\subsection{Connectivity of graphs}

In this section we review some notions on connectivity for graphs, both for
contextual reasons, since these notions relate well with conditions that are
natural from the physical point of view, and in order to improve the results
obtained above.

Given a graph $\Gamma$ and a vertex $v\in V(\Gamma)$, the graph 
$\Gamma \smallsetminus v$
is the graph with vertex set $V(\Gamma)\smallsetminus \{ v\}$ and edge set 
$E(\Gamma)\smallsetminus \{e:\, v\in \partial(e)\}$, \ie the graph 
obtained by removing from 
$\Gamma$ the star of the vertex $v$. 
It is customary to refer to $\Gamma \smallsetminus v$ simply 
as ``the graph obtained by removing the vertex $v$", even though 
one in fact removes also all the edges adjacent to~$v$.

There are two different notions of connectivity for graphs. To avoid confusion, we
refer to them here as $k$-edge-connectivity and $k$-vertex-connectivity. 
For the notion of $k$-vertex connectivity we follow \cite{MoTho} p.11, though 
in our notation graphs include the case of multigraphs.

\begin{defn}\label{kconnected}
The notions of $k$-edge-connectivity and $k$-vertex-connectivity are defined 
as follows:
\begin{itemize}
\item A graph is $k$-edge-connected if it cannot be disconnected by removal of 
any set of $k-1$ (or fewer) edges.
\item A graph is $2$-vertex-connected if it has no looping edges, it has at least
$3$ vertices, and it cannot be disconnected by removal of a single vertex, where
vertex removal is defined as above. 
\item For $k\geq 3$, a graph is $k$-vertex-connected if it has no looping
edges and no multiple edges, it has at least $k+1$ vertices, and it 
cannot be disconnected by removal of any set of $k-1$ vertices.
\end{itemize}
\end{defn}

Thus, $1$-vertex-connected and $1$-edge-connected simply mean 
connected, while $2$-edge-connected is the one-particle-irreducible
 (1PI) condition recalled in Definition~\ref{FeyDiag}.
To see how the condition of 
2-vertex-connectivity relates to the physical 1PI condition, we first
recall the notion of {\em splitting of a vertex}
in a graph $\Gamma$ (\cf \cite{MoTho}, \S 4.2).

\begin{defn}\label{splitvertex}
A graph $\Gamma'$ is a splitting of $\Gamma$ at a vertex $v\in V(\Gamma)$ if
it is obtained by partitioning the set $E\subset E(\Gamma)$ of edges adjacent to $v$ 
into two disjoint non-empty subsets, $E=E_1\cup E_2$ and inserting a new edge 
$e$ to whose end vertices $v_1$ and $v_2$ the edges in the two sets $E_1$ and $E_2$ 
are respectively attached (see Figure \ref{SplitVertFig}).
\end{defn}

\begin{center}
\begin{figure}
\includegraphics[scale=0.5]{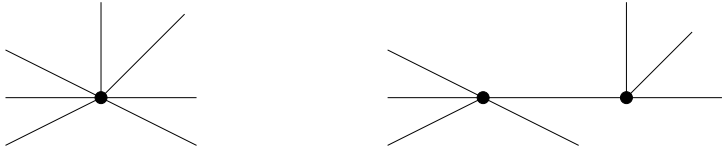}
\caption{A splitting of a graph $\Gamma$ at a vertex $v$
\label{SplitVertFig}}
\end{figure}
\end{center}

We have the following relation between $2$-vertex-connectivity and
$2$-edge-connectivity (1PI). 
The first observation will be needed in the proof of Proposition~\ref{2faces}; the
second is offered mostly for contextual reasons.

\begin{lem}\label{2edgevertex}
Let $\Gamma$ be a graph with at least $3$ vertices 
and no looping edges.
\begin{enumerate}
\item If $\Gamma$ is $2$-vertex-connected then it is also $2$-edge-connected (1PI).
\item  $\Gamma$ is $2$-vertex-connected if and only if all the graphs $\Gamma'$ obtained as 
splittings of $\Gamma$ at any $v\in V(\Gamma)$ are $2$-edge-connected (1PI).
\end{enumerate}
\end{lem}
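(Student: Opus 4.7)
The plan is to prove (1) by contrapositive and (2) by relating splittings to cut vertices; part (2) will also invoke part (1).

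\smallskip

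\emph{Part (1).} Suppose $\Gamma$ is not $2$-edge-connected, so some edge $e$ with endpoints $u,v$ disconnects $\Gamma$ upon removal. Since $\Gamma$ has at least three vertices, pick any $w \neq u,v$; after reindexing, assume $w$ lies in the component of $\Gamma - e$ containing $u$. The claim is that $u$ is a cut vertex. Indeed, any $w$-to-$v$ walk in $\Gamma - u$ cannot use any edge incident to $u$, hence in particular not $e$; it would therefore be a $w$-to-$v$ walk in $\Gamma - e$, contradicting the fact that $w$ and $v$ lie in different components there. Thus $\Gamma - u$ is disconnected, so $\Gamma$ is not $2$-vertex-connected.

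\smallskip

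\emph{Part (2), forward direction.} Assume $\Gamma$ is $2$-vertex-connected, and let $\Gamma'$ be a splitting at $v$ with new edge $e$ joining $v_1,v_2$ and edge partition $E_1 \sqcup E_2$ at $v$. Consider an arbitrary edge $f$ of $\Gamma'$. If $f = e$, then a decomposition of $\Gamma' - e$ into components $A \ni v_1$ and $B \ni v_2$ induces a decomposition of $\Gamma - v$: the vertices of $A\setminus\{v_1\}$ and $B\setminus\{v_2\}$ are disjoint and no edge of $\Gamma - v$ joins them (such an edge would reconnect $A$ and $B$ in $\Gamma'-e$). Both sides are nonempty because $E_1,E_2$ are nonempty and $\Gamma$ has no loops (so the other endpoint of each edge in $E_i$ lies in $V(A)\setminus\{v_1\}$ or $V(B)\setminus\{v_2\}$). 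This contradicts $2$-vertex-connectedness. If $f \neq e$, then $e$ is still present in $\Gamma' - f$, so $v_1$ and $v_2$ lie in the same component; identifying $v_1 = v_2 = v$ yields $\Gamma - f$, which inherits the same disconnection and therefore contradicts part~(1).

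\smallskip

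\emph{Part (2), backward direction.} Assume every splitting of $\Gamma$ at every vertex is $2$-edge-connected. If $\Gamma$ is not $2$-vertex-connected, then some vertex $v$ is a cut vertex; write $V(\Gamma)\setminus\{v\} = W_1 \sqcup W_2$ as a nontrivial partition with no edges of $\Gamma - v$ between $W_1$ and $W_2$ (taking, e.g., one component and its complement). Since $\Gamma$ has no loops, each edge at $v$ has its other endpoint in exactly one $W_i$, giving a partition $E(v) = E_1\sqcup E_2$ with both $E_i$ nonempty. Form the splitting $\Gamma'$ at $v$ with this partition; any walk in $\Gamma'-e$ from $v_1$ to $v_2$ would restrict to a walk in $\Gamma - v$ between $W_1$ and $W_2$, which does not exist. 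Hence $e$ is a bridge of $\Gamma'$, contradicting the $2$-edge-connectedness of $\Gamma'$.

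\smallskip

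The only delicate step is the $f = e$ case of the forward direction of (2): one must verify both sides of the induced decomposition of $\Gamma - v$ are nonempty, which is exactly where the no-loops hypothesis is essential. Everything else amounts to routine bookkeeping of components under the contraction/splitting correspondence between $\Gamma$ and $\Gamma'$.
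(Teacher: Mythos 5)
Your proof is correct and follows essentially the same route as the paper's: part (1) via the same contrapositive (a bridge $e$ with a third vertex $w$ produces a cut vertex at an endpoint of $e$), and part (2) by reducing to cut vertices in both directions, using the contraction/splitting correspondence between $\Gamma$ and $\Gamma'$ and invoking part (1) for the case $f\neq e$. The only presentational difference is that the paper states both implications of (2) in contrapositive form, while you prove the forward implication directly by contradiction; you also helpfully flag the role of the no-loops hypothesis in guaranteeing nonemptiness of both sides of the induced partition, which the paper uses but does not emphasize.
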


\proof   (1): We have to show that, for a graph $\Gamma$ with at least $3$ vertices
and no looping edges, $2$-vertex-connectivity 
implies $2$-edge-connectivity.
Assume that $\Gamma$ is not 1PI. Then there exists an edge $e$ such
that $\Gamma\smallsetminus e$ has two connected components $\Gamma_1$
and $\Gamma_2$. Since $\Gamma$ has no looping edges, $e$ has two distinct
endpoints $v_1$ and $v_2$, which belong to the two different components
after the edge removal. 
Since $\Gamma$ has at least $3$ vertices, at least
one of the two components contains at least two vertices. Assume then that
there exists $v\neq v_1 \in V(\Gamma_1)$. Then, after the removal of the vertex 
$v_1$ from $\Gamma$, the vertices $v$ and $v_2$ belong to different connected
components, so that $\Gamma$ is not 2-vertex-connected.

(2): We need to show that $2$-vertex-connectivity is  
equivalent to all splittings $\Gamma'$ being 1PI.
Suppose first that $\Gamma$ is not $2$-vertex-connected. Since $\Gamma$ has at
least $3$ vertices and no looping edges, the failure of $2$-vertex-connectivity
means that there exists a vertex $v$ whose
removal disconnects the graph. Let $V\subset V(\Gamma)$ be the set 
of vertices other than $v$ that are endpoints of the edges adjacent to~$v$.  
This set is a union $V=V_1 \cup V_2$ where
the vertices in the two subsets $V_i$ are contained in at least two different 
connected components of $\Gamma\smallsetminus v$. Then the 
splitting $\Gamma'$ of $\Gamma$ at $v$ obtained by inserting 
an edge $e$ such that
the endpoints $v_1$ and $v_2$ are connected by edges, respectively, 
to the vertices in $V_1$ and $V_2$ is not 1PI. 

Conversely, assume that there exists a splitting $\Gamma'$ of $\Gamma$ 
at a vertex $v$ that is not 1PI. There exists an edge $e$ of $\Gamma'$ whose
removal disconnects the graph. If  $e$ already belonged to $\Gamma$,
then $\Gamma$ would not be 1PI (and hence not $2$-vertex connected, by (1)),
as removal of $e$  would disconnect it. So  $e$ must be the edge added in
the splitting of $\Gamma$ at the vertex $v$. 

Let $v_1$ and $v_2$ be the endpoints of $e$. None of the other edges
adjacent to $v_1$ or $v_2$ is a looping edge, by hypothesis; therefore
there exist at least another vertex $v'_1\ne v_2$ adjacent to $v_1$, and 
a vertex $v'_2\ne v_1$ adjacent to $v_2$. Since $\Gamma'\smallsetminus e$ 
is disconnected, $v'_1$ and $v'_2$ are in distinct connected components
of $\Gamma'\smallsetminus e$. Since $v'_1$ and $v'_2$ are in $\Gamma
\smallsetminus v$, and $\Gamma \smallsetminus v$ is contained in
$\Gamma' \smallsetminus e$, it follows that removing $v$ from $\Gamma$
would also disconnect the graph. Thus $\Gamma$ is not $2$-vertex-connected.
\endproof

The first statement in Lemma~\ref{2edgevertex} admits the following
analog for 3-connectivity.

\begin{lem}\label{3edgever}
Let $\Gamma$ be a graph with at least $4$ vertices, with no looping edges
and no multiple edges. Then $3$-vertex-connectivity implies 
$3$-edge-connectivity.
\end{lem}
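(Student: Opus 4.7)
The plan is to follow the template of part~(1) of Lemma~\ref{2edgevertex} and argue by contrapositive: assume $\Gamma$ is \emph{not} $3$-edge-connected, and produce a set of at most two vertices whose removal disconnects~$\Gamma$, contradicting $3$-vertex-connectivity.

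First I would dispose of the easy reduction. If $\Gamma$ is not even $2$-edge-connected, then Lemma~\ref{2edgevertex}(1) already produces a single vertex whose removal disconnects $\Gamma$. Otherwise $\Gamma$ is $2$-edge-connected, and there exist two edges $e_1,e_2$ whose removal separates $V(\Gamma)=V_1\sqcup V_2$ with $V_1,V_2\ne\emptyset$; after labeling, $e_i=(a_i,b_i)$ with $a_i\in V_1$, $b_i\in V_2$. Two structural remarks are immediate: the edges $e_1,e_2$ are the only edges of $\Gamma$ between $V_1$ and $V_2$, and every vertex of $\Gamma$ has degree at least~$2$ (else a single incident edge would be a bridge, contradicting $2$-edge-connectivity).

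Next I would split the analysis by the sizes of the two parts:
\begin{itemize}
\item \emph{If one part, say $V_1$, is a single vertex $u$:} all edges at $u$ are among $e_1,e_2$, so $b_1\ne b_2$ (no multiple edges) and $\deg(u)=2$. Because $|V(\Gamma)|\ge 4$, we get $|V_2|\ge 3$, and removing $\{b_1,b_2\}$ isolates $u$ from the nonempty set $V_2\smallsetminus\{b_1,b_2\}$.
\item \emph{If $|V_1|,|V_2|\ge 2$:} first $a_1\ne a_2$ and (symmetrically) $b_1\ne b_2$, for otherwise $a_1=a_2=a$ would make removal of the single vertex $a$ separate the nonempty $V_1\smallsetminus\{a\}$ from $V_2$, violating $2$-vertex-connectivity. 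If now $|V_2|\ge 3$ (or symmetrically $|V_1|\ge 3$), removal of $\{b_1,b_2\}$ separates $V_1$ from $V_2\smallsetminus\{b_1,b_2\}\ne\emptyset$. The only leftover case is $|V_1|=|V_2|=2$: the no-loop, no-multiple-edge, and degree-$\ge 2$ constraints together force $\Gamma$ to be exactly the $4$-cycle on $a_1,b_1,b_2,a_2$, and removal of any pair of opposite vertices of this cycle disconnects the remaining two.
\end{itemize}

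In each scenario one produces a set of at most two vertices whose removal disconnects~$\Gamma$, establishing the contrapositive. The main subtlety will be the degenerate case $|V_1|=|V_2|=2$, where all three hypotheses (no loops, no multiple edges, $2$-edge-connectivity giving minimum degree~$2$) must be combined to recognize $\Gamma$ as the $4$-cycle and then to verify directly that this graph is not $3$-vertex-connected; the other cases are straightforward removals of the obvious pair of cross-edge endpoints.
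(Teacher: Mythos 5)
Your proof is correct, and it takes a genuinely different route from the paper's. The paper argues by contradiction and organizes the analysis by whether the two cut edges $e_1,e_2$ share an endpoint, then tracks the endpoints of the edges across the components left after their removal. You instead argue via the contrapositive and organize by the \emph{size} of the two sides $V_1,V_2$ of the $2$-edge cut: $|V_1|=1$; $\min(|V_1|,|V_2|)\ge 2$ with one side of size $\ge 3$; and the degenerate $|V_1|=|V_2|=2$ case. Your decomposition has the virtue of exploiting the bipartition structure of a minimal edge cut systematically, and the $a_1\ne a_2$, $b_1\ne b_2$ observation plus the degree-$\ge 2$ remark cleanly force the final $4$-cycle case, whereas the paper's argument handles the analogous bookkeeping by a case analysis on which components the four endpoints lie in after the edge deletion. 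One small thing you are taking for granted and should state: in a $2$-edge-connected graph, a disconnecting pair $\{e_1,e_2\}$ is a \emph{minimal} edge cut, so both edges cross a common bipartition $V(\Gamma)=V_1\sqcup V_2$ and are the only edges between the two sides; this is standard but is precisely what lets you write $e_i=(a_i,b_i)$ with $a_i\in V_1$, $b_i\in V_2$, and without it the $|V_i|$ case split would not be available. Aside from making that explicit, the argument is complete and matches the paper's conclusion.
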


\proof We argue by contradiction. 
Assume that $\Gamma$ is $3$-vertex-connected but not 2PI. We know it is 1PI because of
the previous lemma. Thus, there exist two edges $e_1$ and $e_2$
such that the removal of both edges is needed to disconnect the graph. Since
we are assuming that $\Gamma$ has no multiple or looping edges,
the two edges have at most one end in common. 

Suppose first
that they have a common endpoint $v$. Let $v_1$ and $v_2$ denote
the remaining two endpoints, $v_i\in \partial e_i$, $v_1\neq v_2$.  If
the vertices $v_1$ and $v_2$ belong to different connected components
after removing $e_1$ and $e_2$, then the removal of the vertex $v$ disconnects
the graph, so that $\Gamma$ is not 3-vertex-connected (in fact not even 
2-vertex-connected). If $v_1$ and~$v_2$ belong to the same connected 
component, then $v$ must be in a different component. 
Since the graph has at least $4$ vertices and no multiple or looping 
edges, there exists at least another edge attached to either $v_1$, $v_2$, 
or $v$, with the other endpoint $w\notin \{ v, v_1,v_2 \}$. 
If $w$ is adjacent to $v$, then removing $v$ and $v_1$ leaves $v_2$
and $w$ in different connected components. Similarly, if $w$ is
adjacent to (say) $v_1$, then the removal of the two vertices $v_1$ and
$v_2$ leave $v$ and $w$ in two different connected components.
Hence $\Gamma$ is not 3-vertex-connected. 

Next, suppose that
$e_1$ and $e_2$ have no endpoint in common. Let $v_1$ and $w_1$
be the endpoints of $e_1$ and $v_2$ and $w_2$ be the endpoints
of $e_2$. At least one pair $\{ v_i, w_i\}$ belongs to two separate
components after the removal of the two edges, though not all four
points can belong to different connected components, else the graph
would not be 1PI. Suppose then that $v_1$ and $w_1$ are in different
components. It also cannot happen that $v_2$ and $w_2$ belong to
the same component, else the removal of $e_1$ alone would
disconnect the graph. We can assume then that, say, $v_2$ belongs 
to the same component as $v_1$ while $w_2$ belongs to a
different component (which may or may not be the same as that of 
$w_1$). Then the removal of $v_1$ and $w_2$ leaves $v_2$ and $w_1$ in two
different components so that the graph is not 3-vertex-connected.
\endproof

\begin{rem}\label{Landau}
{\rm
While the 2-edge-connected hypothesis on Feynman graphs is very
natural from the physical point of view, since it is just the 1PI condition 
that arises when one considers the perturbative expansion of the
effective action of the quantum field theory (\cf~\cite{ItZu}), conditions
of 3-connectivity (3-vertex-connected or 3-edge-connected)
arise in a more subtle manner in the theory of Feynman integrals,
in the analysis of Laundau singularities (see for instance \cite{SMJO}).
In particular, the 2PI effective action is often considered in quantum
field theory in relation to non-equilibrium phenomena, see
\eg \cite{Rammer}, \S 10.5.1. }
\end{rem}

\subsection{Connectivity and embeddings}
We now recall another property of graphs on surfaces, namely 
the {\em face width} of an
embedding $\iota: \Gamma \hookrightarrow S$. 
The face width $fw(\Gamma,\iota)$ is
the largest number $k\in \N$ such that every 
non-contractible simple closed curve in $S$
intersects $\Gamma$ at least $k$ times. 
When $S$ is a sphere, hence $\iota:\Gamma
\hookrightarrow S$ is a planar embedding, 
one sets $fw(\Gamma,\iota)=\infty$.

\begin{rem}\label{closed2cell}
{\rm
For a graph $\Gamma$ with at least $3$ vertices and with no looping
edges, the condition that an embedding
$\iota: \Gamma \hookrightarrow S$ is a {\em closed} 2-cell embedding is
equivalent to the properties that $\Gamma$ is 2-vertex-connected and 
that the embedding has face width $fw(\Gamma,\iota)\geq 2$, 
see Proposition~5.5.11 of \cite{MoTho}.}
\end{rem}  

In particular, 
this implies that a planar graph with at least three vertices and no
looping edges admits a closed 2-cell embedding 
in the sphere if and only if it is 2-vertex-connected. Notice that the
condition that $\Gamma$ has at least $3$ vertices and no looping
edges is necessary for this statement to be true. For example, the graph 
with two vertices, one edge between them, and one looping edge attached 
to each vertex cannot be disconnected by removal of a single vertex,
but does not have a closed 2-cell embedding in the sphere. Similarly,
the graph consisting of two vertices, one edge between them and
one looping edge attached to one of the vertices admits a closed 2-cell
embedding in the sphere, but is not 2-vertex-connected. (See 
Figure~\ref{counterexFig}.)

It is not known whether every 2-vertex-connected 
graph $\Gamma$ admits a closed 2-cell embedding.
The ``strong orientable embedding conjecture" states that 
this is the case, namely, that every
2-vertex-connected graph $\Gamma$ admits a
closed $2$-cell embedding 
in some orientable surface $S$, of face width at least two
(see \cite{MoTho}, Conjecture 5.5.16).

\begin{center}
\begin{figure}
\includegraphics[scale=0.4]{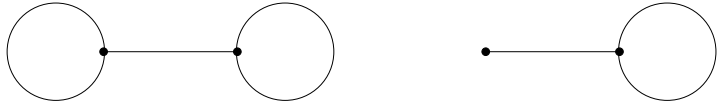}
\caption{Vertex conditions and 2-cell embeddings.
\label{counterexFig}}
\end{figure}
\end{center}

We are now ready for the promised improvement of Lemma~\ref{expli1}.

\begin{prop}\label{2faces}
Let $\Gamma$ be a graph with at least $3$ vertices and with no looping edges,
which is closed-$2$-cell embedded in an orientable surface $S$.
Then, if any two of the faces have at most one edge in common, the map $\tau$ is injective.
\end{prop}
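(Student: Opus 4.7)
The plan is to reduce Proposition~\ref{2faces} to Lemma~\ref{expli1}: the first bullet of Lemma~\ref{expli1} is exactly our hypothesis, so it suffices to show that its second bullet---``every edge of $\Gamma$ is in the boundary of two of the $f$ faces''---is automatic under our assumptions. Once that is established, Lemma~\ref{expli1} yields injectivity of $\tau$ immediately.

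The cleanest route to this is directly topological. In a $2$-cell embedding on an orientable surface, every edge is adjacent to exactly two face-sides, counted with multiplicity. Suppose, for contradiction, that some edge $e$ lies on the boundary of only one face $F$; then both sides of $e$ lie in $F$, and the boundary walk of $F$ traverses $e$ twice, necessarily in opposite directions by orientability. The characteristic map $\phi\colon \bar D \to \bar F$ of the $2$-cell $F$ then identifies two disjoint arcs of $\partial \bar D$ to the single edge $e$. Working on the model $\bar D = [0,1]^2$ with the two arcs being its vertical sides and the identification $(0,t) \sim (1,t)$ (the orientation behavior forced by orientability of $S$), the quotient is a cylinder rather than a disk---in direct contradiction with the closed-$2$-cell hypothesis that $\bar F$ is a disk. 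Hence every edge bounds two distinct faces and Lemma~\ref{expli1} applies.

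A second, more graph-theoretic route is available and may better match the spirit of the surrounding connectivity discussion: Remark~\ref{closed2cell} combined with our hypotheses implies that $\Gamma$ is $2$-vertex-connected, and Lemma~\ref{2edgevertex}(1) then gives $2$-edge-connectivity, so $\Gamma$ has no bridges. Separately, one shows that in any $2$-cell embedding on an orientable surface an edge lying on only one face is a bridge: joining the two sides of $e$ by an arc $\alpha \subset F$ disjoint from $\Gamma$ and closing it transversely across $e$ produces a simple closed curve $\gamma \subset S$ meeting $\Gamma$ in a single point interior to $e$, and a standard mod-$2$ intersection argument on $S$ then shows that the endpoints of $e$ must lie in different components of $\Gamma \smallsetminus e$, contradicting no-bridge.

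The main obstacle in either approach is the small topological argument itself---in the first, that identifying two disjoint boundary arcs of a disk along a length-preserving homeomorphism cannot yield a disk; in the second, that the auxiliary curve $\gamma$ genuinely separates the endpoints of $e$ in $\Gamma \smallsetminus e$. I expect the first approach to be the most robust, since it refers directly to the defining property of a closed-$2$-cell embedding and sidesteps any use of the preceding connectivity results and any intersection-pairing reasoning.
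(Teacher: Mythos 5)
Your reduction of the claim to the second bullet of Lemma~\ref{expli1} is exactly what the paper does, and your second route is, up to cosmetic rephrasing, the paper's own proof: the paper likewise takes a path~$\gamma$ inside the closed face from one side of $e$ to the other, notes that it separates, deduces that removing $e$ disconnects $\Gamma$, and then appeals to Remark~\ref{closed2cell} together with Lemma~\ref{2edgevertex}(1) for the contradiction with $2$-edge-connectivity. Your first route is a genuinely different, more self-contained argument that works directly with the characteristic map rather than through the connectivity machinery; but as written it has a gap. After identifying only the two arcs of $\partial\bar D$ that map to $e$ you correctly obtain a cylinder, but $\bar F$ is the image of $\bar D$ under the \emph{full} characteristic map, which in general performs further boundary identifications (any other edge traversed twice by the boundary walk of~$F$, and repeated vertices, also get glued). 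So ``the quotient is a cylinder'' does not yet contradict ``$\bar F$ is a disk.'' To close the gap you could note that every further gluing is of disjoint boundary arcs or of points and hence only lowers the Euler characteristic, so $\chi(\bar F)\le\chi(\text{cylinder})=0<1=\chi(\text{disk})$; but it is cleaner to bypass the quotient picture entirely and simply observe that an interior point of $e$ has a full two-dimensional disk neighborhood contained in $\bar F$ (both local sides of $e$ lie in $F\subset\bar F$), whereas every boundary point of a closed disk has only a half-disk neighborhood in it. That local argument is the most robust version of your first route and is fully independent of the connectivity results. In context, though, the paper's route (your second) is shorter precisely because Remark~\ref{closed2cell} and Lemma~\ref{2edgevertex} have already been established.
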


\proof It suffices to show that, under these conditions on the graph $\Gamma$,
the second condition of Lemma~\ref{expli1} is automatically satisfied, so that
only the first condition remains to be checked. That is, we show that 
every edge of $\Gamma$ is in the boundary of two faces.

Assume an edge is not in the boundary of two faces. Then that 
edge must bound the same face on both of its sides, as in Figure \ref{edgeone}.
The closure of the face is a cell, by assumption. Let $\gamma$ be a 
path from one side of the edge to the other. Since $\gamma$ splits 
the cell into two connected components, it follows that removing
the edge splits $\Gamma$ into two connected components, hence
$\Gamma$ is not $2$-edge-connected. However, 
as recalled in Remark~\ref{closed2cell},
the fact that $\Gamma$ has at least $3$ vertices and
no looping edges and it admits a closed $2$-cell embedding implies that 
$\Gamma$ is $2$-vertex-connected, hence in particular it is 1PI by the first part of
Lemma~\ref{2edgevertex}, and this gives a contradiction.
\endproof

\begin{center}
\begin{figure}
\includegraphics[scale=.6]{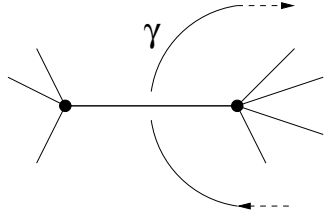}
\caption{An edge not in the boundary of two faces.
\label{edgeone}}
\end{figure}
\end{center}

The condition that $\Gamma$ has at least $3$ vertices and no looping
edges is necessary for Proposition~\ref{2faces}.  
For example, the second graph shown in Figure~\ref{counterexFig}
does not satisfy the property that each edge is in the boundary of two faces;
in the case of this graph, clearly the map $\tau$ is not injective.

\smallskip

Here is another direct consequence of the previous embedding results.

\begin{prop}\label{2conn}
Let $\Gamma$ be a $3$-edge-connected graph, with at least $3$ vertices and no looping edges,
admitting a closed-$2$-cell embedding $\iota: \Gamma \hookrightarrow S$ 
with face width $fw(\Gamma,\iota)\ge 3$. 
Then the maps $\tau_i$, $\tau$ are all injective.
\end{prop}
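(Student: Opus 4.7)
The plan is to reduce to Proposition~\ref{2faces}, so the only new input that needs to be established under the hypotheses of Proposition~\ref{2conn} is the geometric condition that \emph{any two faces of the embedding have at most one edge in common}. Once this is verified, injectivity of $\tau$ (and hence of all the $\tau_i$ since they factor as projections of $\tau$) follows directly from Proposition~\ref{2faces}, because all the other hypotheses of that proposition (at least $3$ vertices, no looping edges, closed $2$-cell embedding) are among the hypotheses we are given here.

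To prove this missing input I would argue by contradiction. Suppose two faces $F_1$ and $F_2$ of the embedding $\iota:\Gamma\hookrightarrow S$ share two distinct edges $e_1$ and $e_2$. Choose interior points $p_1\in F_1$, $p_2\in F_2$, and form a simple closed curve $\gamma$ in $S$ obtained by concatenating an arc from $p_1$ to $p_2$ crossing $e_1$ transversally at a single interior point, and an arc from $p_2$ back to $p_1$ crossing $e_2$ transversally at a single interior point; such arcs exist inside the open $2$-cells $F_1,F_2$ because each face is homeomorphic to an open disk. By construction $\gamma$ meets $\Gamma$ in exactly two points.

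Now I would split into two cases according to whether $\gamma$ is contractible in $S$.

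If $\gamma$ is non-contractible, then since $\gamma$ meets $\Gamma$ in only two points we would have $fw(\Gamma,\iota)\le 2$, contradicting the assumption $fw(\Gamma,\iota)\ge 3$. If on the other hand $\gamma$ is contractible, then by the Jordan--Schoenflies theorem applied on $S$ (or by the fact that a contractible simple closed curve on an orientable surface bounds a disk), $\gamma$ separates $S$ into two open regions, one of which is a disk~$D$. The edges $e_1,e_2$ are the only edges of $\Gamma$ crossed by $\gamma=\partial D$. Since $\Gamma$ is connected and the $2$-cell embedding realizes $\Gamma$ inside $S$, the portion of $\Gamma$ strictly inside $D$ is non-empty (it contains at least the parts of $e_1$ and $e_2$ lying on the $D$-side, and by connectedness of $\Gamma$ together with the fact that $\Gamma$ has at least $3$ vertices and no looping edges one can ensure there is actually graph structure on both sides of $\gamma$), and $\Gamma$ restricted to the closed disk $\overline D$ communicates with $\Gamma$ restricted to $S\smallsetminus D$ only via $e_1$ and $e_2$. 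Consequently removing the two edges $e_1,e_2$ disconnects $\Gamma$, contradicting the hypothesis that $\Gamma$ is $3$-edge-connected.

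The one step that needs care is the contractible case: I must ensure that both sides of $\gamma$ actually contain parts of $\Gamma$, so that the removal of $\{e_1,e_2\}$ genuinely disconnects. The hypotheses (at least $3$ vertices, no multiple looping edges, $3$-edge-connectivity) rule out the degenerate pictures in which one side of $\gamma$ contains no vertices; if $D$ contained no vertex then $e_1$ and $e_2$ would both lie entirely on $\partial D$, forcing them to be parallel edges between the same two vertices and making $\{e_1,e_2\}$ a $2$-edge cut anyway, again contradicting $3$-edge-connectivity. This is the only delicate point; the rest of the argument is a clean combination of Proposition~\ref{2faces} with the definitions of face width and edge connectivity.
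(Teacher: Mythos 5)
Your proof follows essentially the same route as the paper's: reduce to Proposition~\ref{2faces} (so only the ``any two faces share at most one edge'' condition needs checking), build a simple closed curve $\gamma$ meeting $\Gamma$ in exactly two points, rule out the non-contractible case by $fw(\Gamma,\iota)\ge 3$, and rule out the contractible case by $3$-edge-connectivity. One small quibble with your closing paragraph: the justification you give for why both sides of $\gamma$ contain vertices (``if $D$ contained no vertex then $e_1$ and $e_2$ would both lie entirely on $\partial D$'') is not right --- $e_1$ crosses $\gamma$ transversally, it does not lie on it; the clean reason is a parity argument: since $\gamma$ crosses $e_1$ at exactly one interior point and $\gamma$ separates $S$, the two endpoints of $e_1$ must lie on opposite sides of $\gamma$, so both sides automatically contain vertices of $\Gamma$.
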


\proof The result of Proposition~\ref{2faces} shows that the second
condition stated in Lemma~\ref{expli1} is automatically satisfied,
so the only thing left to check is that the first condition stated in 
Lemma~\ref{expli1} holds. Assume that two faces $F_1$, $F_2$ have more 
than one edge in common, see Figure \ref{edgetwo}.
Since $F_1$, $F_2$ are (path-)connected, there are paths
$\gamma_i$ in $F_i$ connecting corresponding sides of the
edges. With suitable care, it can be arranged that 
$\gamma_1 \cup \gamma_2$ is a closed path 
$\gamma$ meeting $\Gamma$ in~$2$ points, see Figure \ref{edgetwo}.
Since the embedding has face width $\ge 3$, $\gamma$
must be null-homotopic in the surface, and in particular it
splits it into two connected components. This implies that 
$\Gamma$ is split into two connected components by 
removing the two edges, hence $\Gamma$ cannot be
$3$-edge-connected. 
\endproof

\begin{center}
\begin{figure}
\includegraphics[scale=.6]{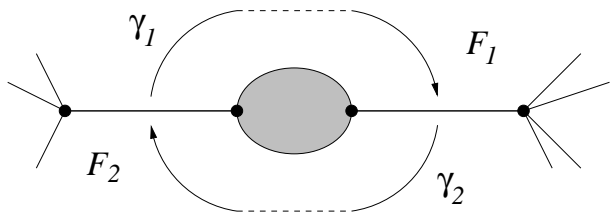}
\caption{Two faces with more than one edge in common.
\label{edgetwo}}
\end{figure}
\end{center}

The 3-edge-connectivity hypothesis in Proposition~\ref{2conn} can be
viewed as the next step strengthening of the 1PI condition, 
\cf~Remark~\ref{Landau}.
Similarly, the condition of the face width of the embedding 
$fw(\Gamma,\iota)\geq 3$ is the next step strengthening of 
the condition $fw(\Gamma,\iota)\geq 2$ conjecturally implied 
by 2-vertex-connectivity. 

In fact, if we enhance in Proposition~\ref{2conn} the 3-edge-connected
hypothesis with 3-vertex-connectivity (see Lemma~\ref{3edgever}), we can refer to
a result of graph theory (\cite{MoTho}, Proposition~5.5.12) which shows that
for a 3-vertex-connected graph it is equivalent to admit an embedding
with $fw(\Gamma,\iota)\geq 3$ and to have the {\em wheel neighborhood 
property}, that is, every vertex of $\Gamma$ has a wheel neighborhood.
Another equivalent condition to $fw(\Gamma,\iota)\geq 3$ for a 
3-vertex-connected graph is that the loops determined by the faces
of the embedding as in Lemma~\ref{2cellH1} are either disjoint or
their intersection is just a vertex or a single edge (\cite{MoTho}, Proposition~5.5.12).
For example, we can formulate an analog of Proposition~\ref{2conn} in the following way.

\begin{cor}\label{3conn}
Let $\Gamma$ be a 3-vertex-connected graph such that each vertex has a wheel 
neighborhood. Then the maps $\tau_i$ and $\tau$ of \eqref{taumap}, \eqref{mapstaui}
are all injective.
\end{cor}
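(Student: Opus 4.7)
The plan is to reduce the statement to Proposition~\ref{2conn} by using the equivalences cited just before the corollary. First I would unpack the hypotheses: since $\Gamma$ is $3$-vertex-connected, by Definition~\ref{kconnected} it has at least $4$ vertices, no looping edges, and no multiple edges; in particular, the assumptions of Lemma~\ref{3edgever} are met, so $\Gamma$ is $3$-edge-connected.

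Next, I would invoke the result of Mohar--Thomassen (\cite{MoTho}, Proposition~5.5.12) quoted in the discussion preceding the corollary: for a $3$-vertex-connected graph, the wheel neighborhood property at every vertex is equivalent to the existence of an embedding $\iota:\Gamma \hookrightarrow S$ into an orientable surface with face width $fw(\Gamma,\iota)\ge 3$. Since face width $\ge 2$ together with $2$-vertex-connectivity (which follows from $3$-vertex-connectivity) is precisely the characterization of closed $2$-cell embeddings recalled in Remark~\ref{closed2cell}, any such embedding is in fact a closed $2$-cell embedding.

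With these ingredients in hand, $\Gamma$ satisfies all the hypotheses of Proposition~\ref{2conn}: it is $3$-edge-connected, has at least $3$ vertices, has no looping edges, and admits a closed $2$-cell embedding with face width $\ge 3$. Applying Proposition~\ref{2conn} directly yields that the maps $\tau_i$ and $\tau$ are all injective. The only potential obstacle is making sure the cited equivalence from \cite{MoTho} really produces a single embedding realizing both the closed $2$-cell and the face width $\ge 3$ conditions simultaneously, rather than two separate embeddings, but this is exactly what Proposition~5.5.12 of \cite{MoTho} provides, so the argument is essentially a bookkeeping exercise assembling the preceding lemmas.
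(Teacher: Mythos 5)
Your proof is correct and follows exactly the intended route: the paper itself leaves the corollary without a formal proof because it is precisely this assembly of Lemma~\ref{3edgever}, the Mohar--Thomassen equivalence (\cite{MoTho}, Proposition~5.5.12), Remark~\ref{closed2cell}, and Proposition~\ref{2conn}, all of which you invoke in the right order and with the hypotheses correctly checked. Your closing caveat about the same embedding carrying both the closed $2$-cell property and the face width bound is well taken, and your resolution of it (the embedding of face width $\ge 3$ automatically satisfies the hypotheses of Remark~\ref{closed2cell}) is exactly right.
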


The results derived in this section thus identify classes of graphs
that satisfy simple geometric properties for which the injectivity 
of the map $\tau$ holds.

\subsection{Dependence on $\Gamma$}\label{DoG}

The preceding results refer to the injectivity of the maps $\tau_i$, $\tau$
determined by a given graph $\Gamma$, where $\tau$ maps an affine space
$\A^n$ (where $n$ is the number of internal edges of $\Gamma$) to
$\A^{\ell^2}$ (where $\ell$ is the number of loops of $\Gamma$), by
means of the matrix $M_\Gamma(t)$. The whole matrix $M_\Gamma(t)$
depends of course on the graph $\Gamma$. However, the injectivity
of $\tau$ may be detected by a suitable submatrix. In the following
statement, choose a basis for $H_1(\Gamma,\Z)$ as prescribed in 
Lemma~\ref{2cellH1}; thus, $f-1=\ell-2g$ rows of $M_\Gamma(t)$ 
correspond to the `internal' faces in an embedding of $\Gamma$.

\begin{lem}\label{lm2gminor}
For a graph $\Gamma$ with at least 3 vertices and no looping edges 
that is closed-$2$-cell embedded by $\iota: \Gamma \hookrightarrow S$ 
in an orientable surface $S$ of genus $g$,
the map defined by the $(\ell -2g)\times (\ell -2g)$ minor in the matrix 
$M_\Gamma(t)$ which corresponds to the loops that are boundaries of 
faces on $S$ is injective if and only if the map $\tau$ is injective.
\end{lem}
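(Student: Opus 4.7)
The plan is to establish the two implications separately, with only one of them requiring any real work.

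The ``only if'' direction (minor map injective $\Rightarrow$ $\tau$ injective) is immediate from the fact that the face minor map factors as $\tau$ followed by the linear coordinate projection $\A^{\ell^2}\to\A^{(\ell-2g)^2}$ selecting the $(\ell-2g)\times(\ell-2g)$ block of entries corresponding to the face loops in the basis of Lemma~\ref{2cellH1}. Consequently $\ker(\tau)\subseteq\ker(\text{minor map})$, so injectivity of the minor map forces injectivity of $\tau$.

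For the converse, partition the basis of $H_1(\Gamma,\Z)$ provided by Lemma~\ref{2cellH1} into $f-1=\ell-2g$ face loops and $2g$ handle loops, and split the incidence matrix accordingly as $\eta = [\eta_F \mid \eta_H]$. The matrix $M_\Gamma(t)$ then has the block form
\[
M_\Gamma(t) = \begin{pmatrix} \eta_F^T\Lambda(t)\eta_F & \eta_F^T\Lambda(t)\eta_H \\ \eta_H^T\Lambda(t)\eta_F & \eta_H^T\Lambda(t)\eta_H \end{pmatrix},
\]
with top-left block equal to the $(\ell-2g)\times(\ell-2g)$ face minor. To prove the converse, assuming $\tau$ injective, it suffices to show that whenever $\eta_F^T\Lambda(t)\eta_F=0$ for some $t$, then in fact $M_\Gamma(t)=0$; the injectivity of $\tau$ would then force $t=0$, yielding injectivity of the minor map.

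The structural input driving the reduction is that, under our hypotheses (closed 2-cell embedded, at least three vertices, no looping edges), Remark~\ref{closed2cell} makes $\Gamma$ 2-vertex-connected, and Lemma~\ref{2edgevertex} then makes it 1PI. By the argument in the proof of Proposition~\ref{2faces}, every edge of $\Gamma$ therefore lies on the boundary of exactly two distinct faces; thus each row of $\eta_F$ has at most two nonzero entries, both $\pm 1$, indicating the (at most two) internal faces containing the corresponding edge. The main obstacle will be to combine this sparsity with the freedom to modify handle-loop representatives by face boundaries (i.e., the short exact sequence $0\to B_1(S)\to Z_1(\Gamma)\to H_1(S,\Z)\to 0$ coming from the embedding) in order to deduce vanishing of the mixed block $\eta_F^T\Lambda(t)\eta_H$ and of the handle block $\eta_H^T\Lambda(t)\eta_H$ from vanishing of the face minor. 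Concretely, I would choose handle loops as fundamental cycles relative to a spanning tree $T\subset\Gamma$, re-express each entry involving handle loops as a linear combination of entries involving only face-loop contributions along $T$, and then apply the face-block relations. Controlling signs and overlaps in this expansion, so that every handle-related entry collapses to a consequence of the face-minor vanishing, is the technical heart of the argument.
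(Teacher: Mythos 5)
The ``only if'' direction is handled correctly: the minor map factors as the coordinate projection of $\tau$, so $\ker\tau\subseteq\ker(\text{minor map})$, and injectivity of the minor map pushes back to $\tau$. The problem is the converse: you reduce it to the assertion ``face block vanishes $\Rightarrow$ whole matrix vanishes,'' set up the block decomposition with $\eta=[\eta_F\mid\eta_H]$, describe a plan involving fundamental cycles of a spanning tree and the exact sequence of the embedding, and then declare that ``controlling signs and overlaps in this expansion \ldots is the technical heart of the argument.'' That heart is never supplied. As written, the hard half of the equivalence is not proved; you have only a promissory note.

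Beyond the incompleteness, the framing is also not the paper's. The paper does not argue ``minor block zero $\Rightarrow$ $M_\Gamma(t)=0$.'' Instead it appeals to the reasoning of Lemma~\ref{expli1} together with the observation (coming from Proposition~\ref{2faces}) that every edge lies on some \emph{internal} face loop. The argument of Lemmas~\ref{tauitauinj}--\ref{expli1} shows that injectivity of $\tau$ is equivalent to the collective injectivity of the maps $\tau_i$ attached to the $f-1$ internal face loops, and that the relevant information in each such row --- the diagonal entry $(i,i)$ and the off-diagonal entries shared with other internal faces --- sits entirely inside the $(\ell-2g)\times(\ell-2g)$ minor. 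Thus the kernel of $\tau$ is computed from exactly the linear forms that make up the minor, and the two maps stand or fall together. This is a shorter, row-by-row argument that avoids the handle-loop bookkeeping you were trying to set up. Your observation that each row of $\eta_F$ has at most two nonzero $\pm1$ entries is correct, but the more relevant point for the paper's route is the lower bound: each row of $\eta_F$ has \emph{at least} one nonzero entry, i.e., every edge variable genuinely appears in the minor, which is what makes the equivalence go through.
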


\proof
Indeed, under the given assumptions, every edge appears in the loop
corresponding to some internal face of the embedding. The argument
proving Lemma~\ref{expli1} shows that the given minor determines
the injectivity of $\tau$.
\endproof

A further refinement of the foregoing considerations will allow us to obtain
statements that will be to some extent independent of $\Gamma$, and
only hinge on $\ell=b_1(\Gamma)$ and the genus $g$ of $\Gamma$. 

We have pointed out earlier (\S\ref{Dhagp}) that 
$\det M_\Gamma(t)$ does not depend on the choice of orientation for 
the loops of $\Gamma$. It is however advantageous to make a coherent 
choice for these orientations.
We are now assuming that we have chosen a closed $2$-cell embedding
of $\Gamma$ into an orientable surface of genus $g$; such an embedding
has $f$ faces, where $\ell=2g + f -1$; we can arrange $M_\Gamma(t)$
so that the first $f-1$ rows correspond to the $f-1$ loops determined by
the `internal' faces of the embedding.

On each face, choose the positive
orientation (counterclockwise with respect to an outgoing normal vector). 
Then each edge-variable in common between two faces $i$, $j$ 
will appear with a minus sign in the entries $(i,j)$ and $(j,i)$ of
$M_\Gamma(t)$. These entries are both in the $(\ell-2g)\times (\ell-2g)$
upper-left minor, which is the minor singled out in Lemma~\ref{lm2gminor}.

The upshot is that in the cases covered by the above results
(such as Proposition~\ref{2faces}), the edge variables $t_e$ can all be
obtained by either pulling-back entries $-x_{ij}$ with $1\le i < j\le \ell-2g$,
or a sum 
\[
x_{i1}+x_{i2}+\cdots + x_{i,\ell-2g}
\]
with $1\le i\le \ell-2g$. Note that these expressions only depend on 
$\ell$ and $g$; it follows that all components of the image 
$\tau(\partial\sigma_n)$ in $\A^{\ell^2}$ of the boundary of the simplex 
$\sigma_n$ can be realized as pull-backs
of subspaces of $\A^{\ell^2}$ from a list which {\em only depends on the 
number $\ell-2g$\/} ($=f-1$, where $f$ is the number of faces in a closed
$2$-cell embedding of $\Gamma$).
This observation essentially emancipates the domain of integration in
the integral appearing in the statement
of Lemma~\ref{periodDell} from the specific graph $\Gamma$.

We will return to this point in \S\ref{PeriodSec}, \cf~Proposition~\ref{loopgenus}.

\subsection{More general graphs}\label{Mgg}
The previous combinatorial statements were obtained under the assumption
that the graphs have no looping edges. However, the statement can then
be generalized easily to the case with looping edges using the following
observation.

\begin{lem}\label{addlooping}
Let $\Gamma$ be a graph obtained by attaching a looping edge at
a vertex of a given graph $\Gamma'$. Then the map $\tau_\Gamma$ of
\eqref{taumap} is injective if and only if $\tau_{\Gamma'}$ is.
\end{lem}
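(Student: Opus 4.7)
The plan is to exhibit $\tau_\Gamma$ as the direct sum of $\tau_{\Gamma'}$ with the identity on a one-dimensional factor, from which the claimed equivalence of injectivities will follow immediately.

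First, I would choose a basis of $H_1(\Gamma,\Z)$ adapted to the construction: take any basis $l_1,\ldots,l_{\ell'}$ of $H_1(\Gamma',\Z)$ (whose cycles sit inside $\Gamma$ unchanged, since adding the looping edge does not affect the rest of the graph) and adjoin the new loop $l_{\ell'+1}$ consisting of the attached looping edge $e_{\textnormal{new}}$ alone. Correspondingly order the edges so that $e_{\textnormal{new}}$ is the last one. With respect to these choices the incidence matrix $\eta_\Gamma$ of \eqref{etaik} takes the block form
\[
\eta_\Gamma \;=\; \begin{pmatrix} \eta_{\Gamma'} & 0 \\ 0 & \pm 1 \end{pmatrix},
\]
since no old edge is part of $l_{\ell'+1}$ and, by construction, $e_{\textnormal{new}}$ lies in no old loop (the defining feature of a looping edge being that it forms a single-edge loop by itself, so by Lemma~\ref{mapstaui} its variable appears in exactly one diagonal entry).

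Next, writing $\Lambda_\Gamma$ as the block diagonal matrix $\mathrm{diag}(\Lambda_{\Gamma'},t_{\textnormal{new}})$, a direct block multiplication yields
\[
M_\Gamma(t)\;=\;\eta_\Gamma^\dag\Lambda_\Gamma\,\eta_\Gamma \;=\; \begin{pmatrix} M_{\Gamma'}(t) & 0 \\ 0 & t_{\textnormal{new}} \end{pmatrix}.
\]
That is, the map $\tau_\Gamma:\A^{n+1}\to\A^{(\ell'+1)^2}$ sends $(t_1,\ldots,t_n,t_{\textnormal{new}})$ to a matrix whose upper-left $\ell'\times\ell'$ block is $\tau_{\Gamma'}(t_1,\ldots,t_n)$, whose $(\ell'+1,\ell'+1)$ entry is $t_{\textnormal{new}}$, and whose remaining entries vanish identically.

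Thus, under the evident inclusion of $\A^{(\ell')^2}\times\A\hookrightarrow\A^{(\ell'+1)^2}$ onto this block-diagonal locus, $\tau_\Gamma$ factors as the product map $\tau_{\Gamma'}\times\id_\A$. Since a product of maps is injective if and only if each factor is, and $\id_\A$ is trivially injective, we conclude that $\tau_\Gamma$ is injective if and only if $\tau_{\Gamma'}$ is. There is no real obstacle beyond verifying the block decomposition of $\eta_\Gamma$, which is forced by the fact that the appended looping edge participates in exactly one loop, namely itself.
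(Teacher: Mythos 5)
Your proof is correct and takes essentially the same approach as the paper: both establish that, with a basis of $H_1(\Gamma,\Z)$ obtained from one of $H_1(\Gamma',\Z)$ together with the looping edge as its own loop, the matrix $M_\Gamma(t)$ is block diagonal with blocks $M_{\Gamma'}$ and the single entry $t_{\text{new}}$, so $\tau_\Gamma = \tau_{\Gamma'}\times\id_\A$. The paper simply asserts this block decomposition outright and concludes, whereas you spell out the block form of $\eta_\Gamma$ and carry out the block multiplication, which fills in the same details.
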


\proof Let $t$ be the variable assigned to the looping edge and $t_e$ the
variables assigned to the edges of $\Gamma'$. The matrix $M_\Gamma(t,t_e)$
is of the block diagonal form
$$ M_\Gamma(t,t_e) = \left(\begin{array}{cc} t & 0 \\ 0 & M_{\Gamma'}(t_e) \end{array}\right). $$
This proves the statement.
\endproof

This allows us to extend the results of Proposition~\ref{2conn} and Corollary~\ref{3conn} 
to all graphs obtained by attaching an arbitrary number of looping edges at the vertices of
a graph satisfying the hypothesis of Proposition~\ref{2conn} or Corollary~\ref{3conn}. 

\begin{cor}\label{withloops}
Let $\Gamma$ be a graph such that, after removing all the looping edges,
the remaining graph is 3-vertex-connected with a wheel neighborhood at each 
vertex. Then the maps $\tau_i$, $\tau$ are all injective.
\end{cor}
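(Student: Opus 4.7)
The plan is to reduce the statement directly to Corollary~\ref{3conn} by iteratively stripping off looping edges, using Lemma~\ref{addlooping} to control what happens to $\tau$ at each step. Concretely, let $\Gamma'$ denote the graph obtained by removing all looping edges from $\Gamma$. By hypothesis, $\Gamma'$ is $3$-vertex-connected and every vertex of $\Gamma'$ has a wheel neighborhood, so Corollary~\ref{3conn} applies and the map $\tau_{\Gamma'}$ is injective.

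Next, I would recover $\Gamma$ from $\Gamma'$ by re-attaching the looping edges one at a time, producing an intermediate sequence $\Gamma' = \Gamma_0, \Gamma_1, \dots, \Gamma_k = \Gamma$, where $\Gamma_{j+1}$ is obtained from $\Gamma_j$ by attaching a single looping edge at some vertex. At each stage Lemma~\ref{addlooping} gives the equivalence: $\tau_{\Gamma_{j+1}}$ is injective if and only if $\tau_{\Gamma_j}$ is. Induction on $j$ therefore yields injectivity of $\tau_\Gamma$.

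For the individual maps $\tau_i$ one argues in essentially the same way. A looping edge attached at a vertex contributes its own independent loop to a basis of $H_1(\Gamma,\Z)$; the corresponding row of $M_\Gamma$ is the $1\times 1$ block $(t)$ featured in the block decomposition displayed in the proof of Lemma~\ref{addlooping}. The map $\tau_i$ associated to that loop is just $t\mapsto t$, which is trivially injective, while for any loop $\ell_i$ already present in $\Gamma'$ the corresponding row of $M_\Gamma$ agrees with the row of $M_{\Gamma'}$, so $\tau_i$ is unchanged by the attachment and remains injective by Corollary~\ref{3conn}.

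There is no substantive obstacle here: the content is entirely encapsulated in the two cited results, and the only thing to be careful about is the bookkeeping which ensures that the basis of $H_1(\Gamma,\Z)$ used to build $M_\Gamma(t)$ is compatible with the basis of $H_1(\Gamma',\Z)$ under the block-diagonal decomposition of Lemma~\ref{addlooping}; this is automatic since each newly attached looping edge forms its own independent generator of $H_1$, orthogonal (in the incidence matrix $\eta$) to all loops of $\Gamma'$.
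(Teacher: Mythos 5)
Your proof is correct and matches the paper's intent: the paper states this corollary without a written proof, with the preceding paragraph indicating exactly the argument you give — apply Corollary~\ref{3conn} to the stripped graph $\Gamma'$, then iterate Lemma~\ref{addlooping} to re-attach the looping edges one at a time. Your additional remarks on the $\tau_i$ maps and on the compatibility of the $H_1$ bases under the block-diagonal decomposition fill in details the paper leaves implicit, but they do not depart from the intended route.
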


\medskip

We can further extend the class of graphs to which the results of this section apply
by including those graphs that are obtained from graphs satisfying the hypotheses
of  Proposition~\ref{2faces}, Proposition~\ref{2conn}, Corollary~\ref{3conn}, or
Corollary~\ref{withloops} by subdividing edges.

Let $e_n$ be the edge of $\Gamma$ that is subdivided in two edges $e_n'$ and
$e_n''$ to form the graph~$\Gamma'$. The effect on the graph polynomial is
$$ \Psi_{\Gamma'}(t_1,\ldots,t_{n-1},t_n', t_n'') =\Psi_\Gamma(t_1,\ldots, t_{n-1},
t_n'+t_n''), $$
since the spanning trees of $\Gamma'$ are obtained by adding either $e_n'$ 
or $e_n''$ to those spanning trees of $\Gamma$ that do not contain $e_n$ and by
replacing $e_n$ with $e_n' \cup e_n''$ in the spanning trees of $\Gamma$ that
contain $e_n$. Thus, notice that in this case the injectivity of the map $\tau$ is 
not preserved by the operation of splitting edges. However, one can check directly
that this operation does not affect the nature of the period computed by the Feynman
integral, as the following result shows, so that any result that will show that the
Feynman integral is a period of a mixed Tate motive for a class of graphs with no
valence two vertices will automatically extend to graphs obtained by splitting edges.

\begin{prop}\label{edgediv}
Let $\Gamma'$ be a graph obtained from a given graph $\Gamma$ by subdividing
one of the edges by inserting a valence two vertex. Then the parametric Feynman
integral for $\Gamma'$ will be of the form
\begin{equation}\label{UGammaEdge}
\int_{\sigma_n} \frac{P_\Gamma(t,p)^{-(n+1)+D\ell/2} t_n \omega_n }
{\Psi_\Gamma(t)^{-(n+1) +(\ell+1)D/2}} ,
\end{equation}
with $n=\# E_{int}(\Gamma)$.
\end{prop}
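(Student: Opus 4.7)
The plan is to write down the parametric integral for $\Gamma'$ (which has $n+1$ internal edges and still $\ell$ loops), express both graph polynomials of $\Gamma'$ as pull-backs of the corresponding polynomials of $\Gamma$ along the substitution $t_n \mapsto t'_n + t''_n$, then perform a simplex-preserving change of variables that isolates the `extra' parameter from the subdivision.

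First I would verify the polynomial identities. The identity
\[
\Psi_{\Gamma'}(t_1,\ldots,t_{n-1},t'_n,t''_n)
= \Psi_\Gamma(t_1,\ldots,t_{n-1},t'_n+t''_n)
\]
has already been recorded just before the statement, via the spanning-tree description. The analogous identity
\[
P_{\Gamma'}(t_1,\ldots,t_{n-1},t'_n,t''_n,p) = P_\Gamma(t_1,\ldots,t_{n-1},t'_n+t''_n,p)
\]
needs the cut-set description \eqref{PGammapt}--\eqref{sCcoeff}. The cut sets of $\Gamma'$ are in bijection with pairs $(C,\epsilon)$, where $C$ is a cut set of $\Gamma$ and either $e_n\notin C$ (in which case $\epsilon$ is trivial and the cut set of $\Gamma'$ equals $C$), or $e_n\in C$ and $\epsilon\in\{e'_n,e''_n\}$ records which of the two subdivided edges replaces $e_n$. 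The subdivision vertex has no external edges, so the coefficient $s_C$ in \eqref{sCcoeff} is unchanged. Summing over $\epsilon$ in the second case produces the factor $t'_n+t''_n$ in place of $t_n$, which gives the claimed identity.

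Next I would reparametrize the simplex. Write
\[
u = t'_n+t''_n,\qquad t'_n = s\,u,\qquad t''_n = (1-s)\,u,
\]
with $s\in[0,1]$. This sends $\sigma_n\times[0,1]$ bijectively onto $\sigma_{n+1}$, with
the $(n-1)$-tuple $(t_1,\dots,t_{n-1})$ preserved. The Jacobian of $(u,s)\mapsto(t'_n,t''_n)$ has absolute value $u$, so the volume forms satisfy
\[
\omega_{n+1} \;=\; u\,\omega_n\wedge ds.
\]
Combining this with the polynomial identities above, the parametric integral for $\Gamma'$ becomes
\[
\int_{\sigma_n}\!\int_0^1
\frac{P_\Gamma(t_1,\ldots,t_{n-1},u,p)^{-(n+1)+D\ell/2}\;u\;\omega_n\wedge ds}
{\Psi_\Gamma(t_1,\ldots,t_{n-1},u)^{-(n+1)+(\ell+1)D/2}}.
\]
The integrand is independent of $s$, so the integral over $[0,1]$ is trivial, and renaming $u=t_n$ yields exactly \eqref{UGammaEdge}.

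The main obstacle is really just the bookkeeping in the cut-set argument for $P_{\Gamma'}$: one must make sure the exponent-shift $n\mapsto n+1$ in the two places where $n$ appears in the Feynman integral is correctly attributed to the change in number of internal edges (and not in number of loops, which is invariant under edge subdivision), so that the change-of-variables factor $u$ from the Jacobian is the only surviving trace of the extra parameter. Once that is in place, the rest is a direct computation.
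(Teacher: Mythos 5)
Your proof is correct in its conclusion, but it takes a genuinely different route from the paper's, and one step needs a small patch.

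The paper's argument is entirely in momentum space: the two-valent vertex inserted by the subdivision forces a delta function $\delta(k_n-k_{n+1})$ that collapses the two propagators on the split edge into a single squared propagator $q_n^{-2}$, and then the general Feynman-parameter formula with exponents $(a_1,\dots,a_n)=(1,\dots,1,2)$ produces the factor $t_n$ in the numerator and the shift $n\mapsto n+1$ in the exponents, with an $n$-dimensional integration domain from the start. Your proof instead stays entirely in parametric space: you write down the standard parametric integral for $\Gamma'$ over $\sigma_{n+1}$, pull the two polynomials of $\Gamma'$ back to those of $\Gamma$ under $t_n\mapsto t'_n+t''_n$, and obtain the extra factor $t_n$ as the Jacobian of the change of variables $(t'_n,t''_n)\mapsto(u,s)$ that folds the $(n+1)$-simplex down onto $\sigma_n\times[0,1]$. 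This is a cleaner derivation in the sense that it never leaves the category of parametric integrals, and it makes the graph-theoretic source of the factor $t_n$ transparent; the paper's version is more self-contained because it rederives the whole parametric form and also exhibits the momentum-space constant $C_{\ell,n+1}$.

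The one genuine gap is in your description of the cut sets of $\Gamma'$. You assert that the cut sets of $\Gamma'$ are exactly those of the form $C$ (when $e_n\notin C$) or $(C\smallsetminus\{e_n\})\cup\{e'_n\}$, $(C\smallsetminus\{e_n\})\cup\{e''_n\}$ (when $e_n\in C$), with $C$ a cut set of $\Gamma$. This is not a bijection: $\Gamma'$ can also have cut sets containing \emph{both} $e'_n$ and $e''_n$, namely any $(\ell+1)$-edge set whose removal isolates the new valence-two vertex $v$ as one of the two components. (For example, if $\Gamma$ is the theta graph on edges $e_1,e_2,e_3$ and $e_3$ is subdivided, then $\{e_1,e'_3,e''_3\}$ and $\{e_2,e'_3,e''_3\}$ are cut sets of $\Gamma'$ of this type.) Your enumeration misses these, so the claimed bijection is false as stated. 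The identity $P_{\Gamma'}(t_1,\dots,t_{n-1},t'_n,t''_n,p)=P_\Gamma(t_1,\dots,t_{n-1},t'_n+t''_n,p)$ is nevertheless correct, because each such extra cut set $C'$ isolates $v$, which carries no external edges, so $s_{C'}=\bigl(P_v\bigr)^2=0$ and its term vanishes. You should add that observation; with it, the argument is complete.
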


\proof When one subdivides an edge as above, the Feynman rules imply that
one finds as corresponding Feynman integral an expression of the form
$$ \int \frac{\delta(\sum_i \epsilon_{v,i} k_i + \sum_j \epsilon_{v,j} p_j)}
{q_1\cdots q_{n-1} q_n^2}  \frac{d^Dk_1}{(2\pi)^D}\cdots
 \frac{d^Dk_n}{(2\pi)^D}, $$
where the $q_i(k_i)=k_i^2 + m^2$ are the quadratic forms that give the
propagators associated to the internal edges of the graph. We have
used the constraint $\delta(k_n-k_{n+1})$ for the two momentum variables
associated to the two parts of the split edge, so that we find $q_n^2$ in
the denominator. One then uses the usual formula
$$ \frac{1}{q_1^{a_1}\cdots q_n^{a_n}} = \frac{\Gamma(a_1+\cdots+a_n)}{\Gamma(a_1)\cdots
\Gamma(a_n)} \int_{\R^n_+} \frac{t_1^{a_1-1}\cdots t_n^{a_n-1} \delta(1-\sum_i t_i)}
{(t_1 q_1+\cdots t_n q_n)^{a_1+\cdots +a_n}} $$
to obtain the parametric form of the Feynman integral. In our case this gives
$$  \frac{1}{q_1 \cdots q_{n-1} q_n^2} = n! \int_{\sigma_n} \frac{t_n\, dt_1 \cdots dt_n}
{(t_1 q_1+\cdots t_n q_n)^{n+1}}. $$
Thus, one obtains the parametric form of the Feynman integral as
$$ \int \frac{d^D x_1 \cdots d^D x_\ell}{(\sum_i t_i q_i)^{n+1}} =
C_{\ell,n+1} \det(M_\Gamma(t))^{-D/2} V_\Gamma(t,p)^{-(n+1) +D\ell/2}, $$
where $V_\Gamma(t,p)=P_\Gamma(t,p)/\Psi_\Gamma(t)$ and with
$$ C_{\ell,n+1}= \int\frac{d^D x_1 \cdots d^D x_\ell}{(1+\sum_k x_k^2)^{n+1}}. $$
This gives \eqref{UGammaEdge}.
\endproof

In particular Proposition~\ref{edgediv} shows that the parametric Feynman integral for
the graph $\Gamma'$ is still a period of the same type as that of the graph $\Gamma$,
since it is still a period associated to the complement of the graph hypersurface 
$\hat X_\Gamma$ and evaluated over the same simplex $\sigma_n$. Only the algebraic
differential form changes from $\Psi_\Gamma^{-D/2}V_\Gamma(t,p)^{-n+D\ell/2} \omega_n$
to $\Psi_\Gamma^{-D/2}V_\Gamma(t,p)^{-(n+1)+D\ell/2} t_n \omega_n$, but this does not
affect the nature of the period, at least in the ``stable range" where $D$ is sufficiently large
($D\ell/2 > n$).

\medskip

\section{The motive of the determinant hypersurface
complement}\label{MotSec} 

Our work in \S\ref{DetSec} and~\S\ref{Grcfe} relates the complexity of
a Feynman integral over a graph satisfying suitable combinatorial conditions
to the complexity of the motive 
$$ \m(\A^{\ell^2}\smallsetminus \hat\cD_\ell, \hat\Sigma_\Gamma 
\smallsetminus (\hat\Sigma_\Gamma \cap \hat\cD_\ell)) $$
whose realizations give the relative cohomology of the pair of the complement
of the determinant hypersurface and a normal crossings divisor $\hat\Sigma_\Gamma$
containing the image of the boundary $\tau_\Gamma(\partial\sigma_n)$, as in Lemma
\ref{tauinjXi} below (see Corollary~\ref{pairmot}, Proposition~\ref{2faces} and ff.).

In this section we exhibit an
explicit filtration of the complement of the determinant hypersurface, 
from which we can directly prove that the motive of 
$\A^{\ell^2} \smallsetminus \hat \cD_\Gamma$ 
is mixed Tate. We use this filtration to compute explicitly the class of 
$\A^{\ell^2} \smallsetminus \hat \cD_\Gamma$
in the Grothendieck group of varieties, as well as the class of 
the projective version $\P^{\ell^2-1}\smallsetminus \cD_\ell$.  

Notice that the mixed Tate nature of
the motive of the determinant hypersurface also follows directly 
from the results of Belkale--Brosnan \cite{BeBro},  or from those of Biglari
\cite{Biglari1}, \cite{Biglari2},  but we prefer to give here a very explicit
computation, which will be useful as a preliminary for the similar
but more involved analysis of the loci that contain the boundary 
of the domain of integration that we discuss in the following sections.

\subsection{The motive}\label{SecDellMot}

As we already argued, it is more natural to consider the graph hypersurfaces 
$\hat X_\Gamma$ in the affine space $\A^n$, instead of the projective $X_\Gamma$
in $\P^{n-1}$. Thus, here also we work with the affine space 
$\A^{\ell^2}$ parametrizing $\ell\times\ell$ matrices. The cone $\hat
\cD_\ell$ over the determinant hypersurface consists of matrices of
rank~$< \ell$. Realizing the complement of $\hat \cD_\ell$ in $\A^{\ell^2}$
amounts then to `parametrizing' matrices $M$ of rank exactly~$\ell$.

It is clear how this should be done:
\begin{itemize}
\item[---] The first row of $M$ must be a nonzero vector $v_1$;
\item[---] The second row of $M$ must be a vector $v_2$ that is nonzero
modulo $v_1$;
\item[---] The third row of $M$ must be a vector $v_3$ that is nonzero
modulo $v_1$ and $v_2$;
\item[---] And so on.
\end{itemize}

To formalize this construction, let $E$ be a fixed $\ell$-dimensional
vector space, and work inductively. The first steps of the
construction are as follows.

\begin{itemize}
\item[---] Denote by $\cW_1$ the variety $E\smallsetminus \{0\}$;
\item[---] Note that $\cW_1$ is equipped with a trivial vector bundle $E_1
=E\times \cW_1$, and with a line bundle $S_1:=L_1\subseteq E_1$ whose fiber
over $v_1\in \cW_1$ consists of the line spanned by $v_1$;
\item[---] Let $\cW_2\subseteq E_1$ be the complement $E_1\smallsetminus
L_1$;
\item[---] Note that $\cW_2$ is equipped with a trivial vector bundle $E_2
=E\times \cW_2$, and {\em two\/} line subbundles of $E_2$: the pull-back of 
$L_1$ (still denoted $L_1$) and the line-bundle $L_2$ whose fiber over
$v_2\in \cW_2$ consists of the line spanned by $v_2$;
\item[---] By construction, $L_1$ and $L_2$ span a rank-2 subbundle
$S_2$ of $E_2$;
\item[---] Let $\cW_3\subseteq E_2$ be the complement $E_2\smallsetminus
S_2$;
\item[---] And so on.
\end{itemize}

Inductively: at the $k$-th step, this procedure produces a variety $\cW_k$,
endowed with $k$ line bundles $L_1,\dots,L_k$ spanning a rank-$k$
subbundle $S_k$ of the trivial vector bundle $E_k:=E\times \cW_k$.
If $S_k\subsetneq E_k$, define $\cW_{k+1}:=E_k\smallsetminus S_k$. 
Let $E_{k+1}=E\times \cW_{k+1}$, and define line subbundles $L_1,\dots,
L_k$ to be the pull-backs of the like-named line bundles on $\cW_k$; and let 
$L_{k+1}$ be the line bundle whose fiber over $v_{k+1}$ is the line
spanned by $v_{k+1}$. The line bundles $L_1,\dots,L_{k+1}$ span
a rank-$k+1$ subbundle $S_{k+1}$ of $E_{k+1}$, and the construction
can continue. The sequence stops at the $\ell$-th step, where 
$S_\ell$ has rank~$\ell$,
equal to the rank of $E_\ell$, so that
$E_\ell \smallsetminus S_\ell = \emptyset$.

\begin{lem}\label{VkDell}
The variety $\cW_\ell$ constructed as above 
is isomorphic to $\A^{\ell^2}\smallsetminus
\hat D_\ell$.
\end{lem}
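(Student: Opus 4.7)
The plan is to prove by induction on $k$ that the variety $\cW_k$ constructed inductively is canonically isomorphic to the open subvariety
\[
U_k := \{(v_1,\dots,v_k) \in E^k \,|\, v_1,\dots,v_k \text{ are linearly independent}\}
\subseteq E^k \cong \A^{k\ell},
\]
and that under this identification the subbundle $S_k \subseteq E_k = E\times \cW_k$ has fiber over $(v_1,\dots,v_k)$ equal to $\mathrm{span}(v_1,\dots,v_k)$. Once this is established, the case $k=\ell$ immediately gives the lemma: an $\ell$-tuple $(v_1,\dots,v_\ell)$ of vectors in $E$ is linearly independent precisely when the $\ell\times \ell$ matrix with rows $v_1,\dots,v_\ell$ is of maximal rank, i.e., has nonvanishing determinant. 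Thus $U_\ell$ is exactly $\A^{\ell^2}\smallsetminus \hat\cD_\ell$.

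The base case $k=1$ is immediate: $\cW_1 = E\smallsetminus\{0\} = U_1$, and $L_1 = S_1$ has fiber $\mathrm{span}(v_1)$ by construction. For the inductive step, assume $\cW_k \cong U_k$ and that the fiber of $S_k$ over $(v_1,\dots,v_k)$ is $\mathrm{span}(v_1,\dots,v_k)$. Then $E_k = E\times \cW_k$ is naturally an open subvariety of $E\times E^k = E^{k+1}$, and
\[
\cW_{k+1} = E_k \smallsetminus S_k = \bigl\{(w,v_1,\dots,v_k) \,\big|\, (v_1,\dots,v_k)\in U_k,\ w\notin \mathrm{span}(v_1,\dots,v_k)\bigr\}.
\]
After the harmless relabeling $w = v_{k+1}$ (and reordering to put $v_{k+1}$ last), this is precisely the locus $U_{k+1}$ of $(k+1)$-tuples of linearly independent vectors in $E$. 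Furthermore, the subbundle $S_{k+1}$ is, by construction, the sum of the pullback of $S_k$ (with fiber $\mathrm{span}(v_1,\dots,v_k)$) and the tautological line $L_{k+1}$ (with fiber $\mathrm{span}(v_{k+1})$), so its fiber over $(v_1,\dots,v_{k+1})\in U_{k+1}$ is $\mathrm{span}(v_1,\dots,v_{k+1})$, as required to continue the induction.

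There is no serious obstacle: the content is essentially bookkeeping, and the only point that requires care is tracking that at each stage $S_k$ is literally the span of the previously selected vectors, so that removing $S_k$ from $E_k$ enforces the linear independence of the next vector with respect to its predecessors. Since this matches the description of $U_k$ as an open subvariety of $E^k$, the identification $\cW_\ell \cong U_\ell = \A^{\ell^2}\smallsetminus \hat\cD_\ell$ follows, completing the proof.
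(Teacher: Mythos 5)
Your proof is correct and takes essentially the same approach as the paper: identifying a point of $\cW_k$ with a $k$-tuple of linearly independent row vectors, so that $\cW_\ell$ is exactly the locus of rank-$\ell$ matrices. The paper states this in two sentences and leaves the verification implicit ("by construction"), whereas you carry out the induction explicitly, tracking that $S_k$ has fiber $\mathrm{span}(v_1,\dots,v_k)$; this is more detailed bookkeeping but not a different argument.
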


\proof Each variety $\cW_k$ maps to $\A^{\ell^2}$ as follows: a point of
$\cW_k$ determines $k$ vectors $v_1,\dots,v_k$, and can be mapped
to the matrix whose first $k$ rows are $v_1,\dots,v_k$ resp.~(and the 
remaining rows are~$0$). By construction, this matrix has rank 
exactly~$k$. Conversely, any such rank~$k$ matrix is the image of
a point of $\cW_k$, by construction.
\endproof

In particular, we have the following result on the bundles $S_k$ involved  
in the construction described above.

\begin{lem}\label{trivialSk}
The bundle $S_k$ over the variety $\cW_k$ is trivial for all $1\leq k\leq \ell$. 
\end{lem}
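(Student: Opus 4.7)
The plan is to exhibit an explicit global frame for $S_k$ consisting of $k$ everywhere linearly independent sections, which immediately gives a trivialization $S_k \cong \cO^{\oplus k}_{\cW_k}$.

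By the inductive construction of $\cW_k$, a point of $\cW_k$ is the datum of an ordered tuple $(v_1,\dots,v_k)\in E^k$ such that $v_i \notin \langle v_1,\dots,v_{i-1}\rangle$ for every $i$; equivalently, $v_1,\dots,v_k$ are linearly independent in $E$. This is precisely what is enforced by removing $S_{i-1}$ from $E_{i-1}$ at each stage. For each $i=1,\dots,k$, the tautological assignment
\[
s_i : \cW_k \to E_k = E\times \cW_k, \qquad (v_1,\dots,v_k)\mapsto (v_i,(v_1,\dots,v_k))
\]
defines a global section of $E_k$. By the very definition of the line bundle $L_i\subseteq E_k$ (whose fiber over $(v_1,\dots,v_k)$ is $\langle v_i\rangle$), the section $s_i$ takes values in $L_i$, hence a fortiori in $S_k = L_1+\cdots+L_k$.

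Thus $s_1,\dots,s_k$ are global sections of $S_k$. At every point of $\cW_k$, their values $v_1,\dots,v_k$ are linearly independent in $E$ by the observation above, so they form a fiberwise basis of $S_k$. Consequently the bundle map $\cO_{\cW_k}^{\oplus k}\to S_k$ sending the standard frame to $(s_1,\dots,s_k)$ is an isomorphism, proving triviality.

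I do not anticipate any obstacle: the only subtlety is remembering that "$v_k\notin S_{k-1}$" in the inductive construction must be interpreted pointwise, i.e.\ as $v_k$ not lying in the fiber of $S_{k-1}$ over $(v_1,\dots,v_{k-1})$, which is exactly $\langle v_1,\dots,v_{k-1}\rangle$. Once this is in place, the frame $(s_1,\dots,s_k)$ trivializes $S_k$ with no further computation, and the statement follows.
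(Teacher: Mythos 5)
Your proof is correct and takes essentially the same approach as the paper: the paper writes down directly the isomorphism $\K^k\times\cW_k\to S_k$, $((c_1,\dots,c_k),(v_1,\dots,v_k))\mapsto c_1v_1+\cdots+c_kv_k$, which is exactly the map you obtain by sending the standard frame of $\cO_{\cW_k}^{\oplus k}$ to the global sections $s_1,\dots,s_k$. The two presentations differ only in emphasis (global frame vs.\ explicit bundle map); the content is identical.
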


\proof Points of $\cW_k$ are parameterized by $k$-tuples of vectors
$v_1,\ldots,v_k$ spanning $S_k\subseteq \K^\ell \times \cW_k = E_k$. 
This means precisely that the map
\[
\K^k \times \cW_k \stackrel{\alpha}{\to} S_k 
\]
defined by
\[
\alpha: ((c_1,\ldots,c_r),(v_1,\ldots,v_r)) \mapsto c_1 v_1+\cdots + c_r v_r 
\]
is an isomorphism.
\endproof

\smallskip

Recall that, given a triangulated category $\cD$, a full subcategory 
$\cD'$ is a triangulated subcategory if and only if 
it is invariant under the shift $T$ of $\cD$ and for any distinguished
triangle 
$$ A \to B \to C \to A[1] $$
for $\cD$ where $A$ and $B$ are in $\cD'$ there is an isomorphism
$C\simeq C'$ with $C'$ also in $\cD'$. A full triangulated subcategory
$\cD'\subset \cD$ is thick if it is closed under direct sums. 

Let $\cM\cD_\K$ be the Voevodsky triangulated category of mixed
motives over a field $\K$, \cite{Voev}. The triangulated category
$\cD\cM\cT_\K$ of mixed Tate motives is the full triangulated thick
subcategory of $\cM\cD_\K$ generated by the Tate objects $\Q(n)$. It
is known that, over a number field $\K$, there is a canonical
$t$-structute on $\cD\cM\cT_\K$ and one can therefore construct 
an abelian category $\cM\cT_\K$ of mixed Tate motives (see
\cite{Levine}).

We then have the following result on the nature of the motive 
of the determinant hypersurface complement.

\begin{thm}\label{DellMTM}
The determinant hypersurface complement 
$\A^{\ell^2}\smallsetminus \hat\cD_\ell$ 
defines an object in the category $\cD\cM\cT_\K$ 
of mixed Tate motives. 
\end{thm}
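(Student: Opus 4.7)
The plan is to prove by induction on $k$ that each variety $\cW_k$ from the construction in~\S\ref{SecDellMot} has motive $M(\cW_k) \in \cD\cM\cT_\K$. Setting $k=\ell$ and combining with Lemma~\ref{VkDell} then yields the theorem, since $\cW_\ell \cong \A^{\ell^2}\smallsetminus\hat\cD_\ell$.

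For the base case $k=1$, we have $\cW_1 = \A^\ell \smallsetminus \{0\}$. The Gysin distinguished triangle in Voevodsky's triangulated category of motives, applied to the smooth closed pair $(\A^\ell,\{0\})$, reads
\[
M(\A^\ell \smallsetminus \{0\}) \to M(\A^\ell) \to M(\Spec\K)(\ell)[2\ell] \to M(\A^\ell \smallsetminus \{0\})[1].
\]
By homotopy invariance $M(\A^\ell) \cong \Q(0)$, and $M(\Spec\K)(\ell)[2\ell] = \Q(\ell)[2\ell]$; both lie in $\cD\cM\cT_\K$. As $\cD\cM\cT_\K$ is a triangulated thick subcategory, it is closed under cones up to isomorphism, so $M(\cW_1)\in\cD\cM\cT_\K$.

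For the inductive step, assume $M(\cW_k)\in\cD\cM\cT_\K$. By construction $\cW_{k+1}=E_k \smallsetminus S_k$, where $E_k=E\times\cW_k$ is trivial of rank $\ell$ over $\cW_k$ and, by Lemma~\ref{trivialSk}, $S_k\subset E_k$ is trivial of rank $k$ over $\cW_k$. In particular $S_k$ is smooth, and the inclusion $S_k\hookrightarrow E_k$ is a closed embedding of smooth schemes of pure codimension $\ell-k$. Homotopy invariance applied to the projections $E_k\to\cW_k$ and $S_k\to\cW_k$ gives
\[
M(E_k) \cong M(S_k) \cong M(\cW_k),
\]
which is mixed Tate by the inductive hypothesis. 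The Gysin triangle
\[
M(\cW_{k+1}) \to M(E_k) \to M(S_k)(\ell-k)[2(\ell-k)] \to M(\cW_{k+1})[1]
\]
then exhibits $M(\cW_{k+1})$ (up to shift) as a cone of a morphism between two objects of $\cD\cM\cT_\K$, since Tate twists and translations preserve the subcategory. Thickness closes the induction.

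Iterating $\ell-1$ times yields $M(\cW_\ell)\in\cD\cM\cT_\K$, and Lemma~\ref{VkDell} identifies this with $M(\A^{\ell^2}\smallsetminus\hat\cD_\ell)$, completing the proof. There is no serious obstacle: the substance of the argument is concentrated in Lemma~\ref{trivialSk}, which supplies the triviality of both $E_k$ and $S_k$ over $\cW_k$ needed to apply homotopy invariance on each vertex of the Gysin triangle; once this is in hand, the inductive assembly in $\cD\cM\cT_\K$ is automatic. The only minor point meriting verification is that $S_k\hookrightarrow E_k$ really is a smooth closed embedding of pure codimension $\ell-k$, which follows at once from the triviality of both bundles. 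The same induction, read off in the Grothendieck group, recovers the class $[\cW_\ell]=\prod_{k=0}^{\ell-1}(\L^\ell-\L^k)$.
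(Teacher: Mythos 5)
Your proof is correct and follows the same inductive strategy as the paper: peel off the layers $\cW_1 \subset \cW_2 \subset \cdots \subset \cW_\ell$ built in \S\ref{SecDellMot}, invoking Lemma~\ref{trivialSk} to control $S_k$ and $E_k$ at each step. The one technical difference is that you deduce mixed-Tateness of $\cW_{k+1}=E_k\smallsetminus S_k$ from the Gysin triangle for the smooth closed pair $S_k\hookrightarrow E_k$ (where the Tate twist $(\ell-k)[2(\ell-k)]$ appears automatically and homotopy invariance identifies $M(E_k)\cong M(S_k)\cong M(\cW_k)$), whereas the paper works with the compactly-supported motivic complex $\underline{C^c_*}$, uses the localization triangle $\m(Y)\to\m(X)\to\m(X\smallsetminus Y)\to\m(Y)[1]$ valid for any closed embedding, and separately applies $\m(X\times\A^k)=\m(X)(-k)[2k]$ to get the Tate twist. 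Since every scheme in the tower is smooth, the two triangles are interchangeable, so the substance of the argument is the same; your version is marginally more self-contained because the twist and shift come packaged inside the Gysin triangle.
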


\proof First recall that by Proposition~4.1.4 of \cite{Voev}, 
over a field $\K$ of characteristic zero a closed embedding $Y\subset X$
determines a distinguished triangle  
$$ \m(Y) \to \m(X) \to \m(X\smallsetminus Y) \to \m(Y)[1] $$
in $\cM\cD_\K$. Here we use the notation $\m(X)$ for the motivic
complex with compact support denoted by $\underline{C^c_*}(X)$ in
\cite{Voev}. In particular, if $\m(Y)$ and $\m(X)$ are in $\cD\cM\cT_\K$
then $\m(X\smallsetminus Y)$ is isomorphic to an object in
$\cD\cM\cT_\K$, by the property of full triangulated subcategories
recalled above. Similarly, using the invariance of $\cD\cM\cT_\K$
under the shift, if $\m(Y)$ and $\m(X\smallsetminus Y)$ are in
$\cD\cM\cT_\K$ then $\m(X)$ is isomorphic to an object in
$\cD\cM\cT_\K$. 

We also know (see \S 1.2.3 of \cite{BloMM}) that in the Voevodsky
category $\cM\cD_\K$ one inverts the morphism $X\times \A^1 \to
X$ induced by the projection, so that taking the product with an
affine space $\A^k$ is an isomorphism at the level of the
corresponding motives and for the motivic complexes with compact
support this gives $\m(X\times \A^1)=\m(X)(-1)[2]$, see Corollary~4.1.8
of \cite{Voev}. Thus, for any given $\m(X)$ in $\cD\cM\cT_\K$,
the motive $\m(X\times \A^k)$ is obtained from $\m(X)$ by Tate twists
and shifts, hence it is also in $\cD\cM\cT_\K$. 

These two properties of the derived category $\cD\cM\cT_\K$ of mixed
Tate motives suffice to show that the motive of the 
affine hypersurface complement $\A^{\ell^2}\smallsetminus
\hat\cD_\ell$ is mixed Tate,
\begin{equation}\label{DellMT}
\m(\A^{\ell^2}\smallsetminus \hat\cD_\ell) \in Obj(\cD\cM\cT_\Q).
\end{equation}
In fact, one sees from the inductive construction of
$\A^{\ell^2}\smallsetminus \hat\cD_\ell$ described above that at each
step we are dealing with varieties defines over $\K=\Q$ and we now
show that, at each step, the corresponding motives are mixed Tate.

Single points obviously belong to the category of mixed Tate motives. 
At the first step, one takes the complement $\cW_1$ of a point in an affine
space, which gives a mixed Tate motive by the first observation above
on distinguished triangles associated to closed embeddings. At the
next step one considers the complement of the line bundle $S_1$ inside
the trivial vector bundle $E_1$ over $\cW_1$. Again, both $\m(S_1)$ and
$\m(E_1)$ are mixed Tate motives, since both are products by affine spaces
by Lemma~\ref{trivialSk} above, hence $\m(E_1\smallsetminus S_1)$ is
also mixed Tate. The same argument shows that, for all $1\leq k \leq
\ell$, the motive $\m(E_k\smallsetminus S_k)$ is mixed Tate, by
repeatedly using Lemma~\ref{trivialSk} and the two properties of
$\cD\cM\cT_\Q$ recalled above.
\endproof

\subsection{The class in the Grothendieck ring}\label{GrRingSect}

Lemma~\ref{VkDell} suffices to obtain an explicit formula for the 
class in the Grothendieck ring of varieties of the complement of the
determinant hypersurface. 
This is of course well-known: see for example \cite{BeBro}, \S3.3.

\begin{prop}\label{DellGrclass}
In the affine case the class in the Grothendieck
ring of varieties is
\begin{equation}\label{classDellA}
[\A^{\ell^2}\smallsetminus \hat \cD_\ell]=  
\bL^{\binom \ell 2} \prod_{i=1}^\ell (\bL^i -1)
\end{equation}
where $\bL$ is the class of $\A^1$. In the projective case, the class is
\begin{equation}\label{classDellP}
[\P^{\ell^2-1}\smallsetminus \cD_\ell] = \bL^{\binom \ell 2}
\prod_{i=2}^\ell (\bL^i -1). 
\end{equation}
\end{prop}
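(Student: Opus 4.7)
The plan is to extract the class directly from the inductive construction of $\cW_\ell \cong \A^{\ell^2}\smallsetminus \hat\cD_\ell$ given in Lemma~\ref{VkDell}, using Lemma~\ref{trivialSk} to control the bundles involved. At each step we have $\cW_{k+1} = E_k \smallsetminus S_k$, where $E_k = E\times \cW_k$ is the trivial rank-$\ell$ bundle over $\cW_k$ and $S_k\subseteq E_k$ is a rank-$k$ subbundle. Since the Grothendieck class is multiplicative on Zariski-locally trivial fibrations, and Lemma~\ref{trivialSk} asserts $S_k$ is globally trivial (so in particular $[S_k] = \bL^k [\cW_k]$ and $[E_k] = \bL^\ell[\cW_k]$), the scissor relation gives
\[
[\cW_{k+1}] = [E_k] - [S_k] = (\bL^\ell - \bL^k)\,[\cW_k].
\]

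Starting from $[\cW_1] = [E\smallsetminus\{0\}] = \bL^\ell - 1$ and iterating, I would obtain
\[
[\cW_\ell] = \prod_{k=0}^{\ell-1}(\bL^\ell - \bL^k) = \prod_{k=0}^{\ell-1}\bL^k(\bL^{\ell-k}-1) = \bL^{\binom{\ell}{2}}\prod_{i=1}^{\ell}(\bL^i-1),
\]
where I have pulled out a factor $\bL^k$ from the $k$-th term and reindexed $i = \ell - k$. This yields \eqref{classDellA}.

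For the projective formula \eqref{classDellP}, I would use that both $\hat\cD_\ell$ and its complement are stable under the scaling $\mathbb{G}_m$-action on $\A^{\ell^2}$, and that on the complement this action is free. Thus $\A^{\ell^2}\smallsetminus\hat\cD_\ell \to \P^{\ell^2-1}\smallsetminus\cD_\ell$ is a Zariski-locally trivial $\mathbb{G}_m$-bundle (trivialized, for example, by the standard affine charts of $\P^{\ell^2-1}$), giving
\[
[\A^{\ell^2}\smallsetminus\hat\cD_\ell] = (\bL-1)\,[\P^{\ell^2-1}\smallsetminus\cD_\ell].
\]
Dividing \eqref{classDellA} by $\bL-1$ and cancelling the $i=1$ factor yields \eqref{classDellP}.

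There is no real obstacle here: the content is entirely in Lemma~\ref{VkDell} and Lemma~\ref{trivialSk}, which have already been established. The only point deserving a brief comment is the triviality of the $\mathbb{G}_m$-bundle used in the projective case; this is a routine verification on the standard affine cover of $\P^{\ell^2-1}$.
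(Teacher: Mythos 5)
Your proof is correct and follows the same route as the paper: the paper also derives the affine formula by iterating the construction of Lemma~\ref{VkDell} to get $[\cW_k]=\prod_{j=0}^{k-1}(\bL^\ell-\bL^j)$ and then reindexing, and the projective formula is the expected consequence of dividing out the free $\bG_m$-action. You simply spell out the multiplicativity step (via Lemma~\ref{trivialSk}) and the $\bG_m$-bundle argument, which the paper leaves implicit.
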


\proof Using Lemma~\ref{VkDell} one sees inductively that the class
of $\cW_k$ is given by
\begin{equation}\label{classVk}
\begin{array}{rl}
[\cW_k] = & (\bL^\ell - 1)(\bL^\ell - \bL)(\bL^\ell-\bL^2)\cdots (\bL^\ell
-\bL^{k-1}) \\[3mm]
= & \bL^{\binom k2} (\bL^\ell -1) (\bL^{\ell-1}-1)\cdots
(\bL^{\ell-k+1}-1). \end{array}
\end{equation}
This completes the proof.
\endproof

The class \eqref{classDellP} can be written equivalently in the form
\begin{equation}\label{classLAT}
[\P^{\ell^2-1}\smallsetminus D_\ell]= (\bL [\P^1] \bT)\cdot
(\bL^2 [\P^2] \bT)\cdot (\bL^3[\P^3]\bT) 
\cdots (\bL^{\ell-1}[\P^{\ell-1}]\bT),
\end{equation}
where $\bL=[\A^1]$ and $\bT=[\bG_m]$ is the class of the multiplicative
group. 
Here the motive $\bL^\ell [\P^1]\cdots [\P^{\ell-1}]$ can be thought of as the motive of
the ``variety of frames".

\begin{ex}\label{class23} {\rm In the cases $\ell=2$ and $\ell=3$, the
class of $\P^{\ell^2-1}\smallsetminus \cD_\ell$ is given, respectively,
by 
$$ \bL^3 - \bL \ \ \ \text{ and } \ \ \ \bL^8-\bL^5-\bL^6+\bL^3. $$
(Note however that, for $\ell\ge 5$, coefficients other than $0$, $\pm
1$ appear in the class.) 
Thus, the class $[\cD_\ell]$ is given, for $\ell=2$ and $\ell=3$ by the
expressions 
$$ [\cD_2] = \bL^2+2\bL+1=(\bL+1)^2  $$
$$ [\cD_3]= \bL^7+2\bL^6+2\bL^5+\bL^4+\bL^2+\bL+1=(\bL^3-\bL+1)
(\bL^2+\bL+1)^2. $$
The $\ell=2$ case is otherwise evident: $\cD_2$ is the set
of rank-1, $2\times 2$--matrices, and as such it may be realized as $\P^1
\times \P^1$, with the indicated class. The $\ell=3$ case can also be easily
verified independently.} \end{ex}

\section{Relative cohomology and mixed Tate motives}\label{PeriodSec}

We now assume that $\Gamma$ is a graph satisfying the condition
studied in \S\ref{DetSec} and~\S\ref{Grcfe}: the map~$\tau$ is
injective. By Proposition~\ref{2faces}, this is the case if $\Gamma$ has
at least $3$ vertices, no looping edges, and is closed-$2$-cell embedded
in an orientable surface in such a way that any two of the faces determined
by the embedding have at most one edge in common.
Proposition~\ref{2conn} and Corollary~\ref{3conn} provide us with specific 
combinatorial conditions ensuring that this is the case. 
For instance, all 3-edge connected planar graphs are included in this class.

Also note that by the considerations in \S\ref{Mgg} (especially 
Lemma~\ref{addlooping} and Proposition~\ref{edgediv}), any estimate
for the complexity of Feynman integrals for graphs satisfying these
conditions generalizes automatically to the larger class of graphs obtained 
from those considered here by adding arbitrarily many looping edges, and 
by arbitrarily subdividing edges.

\subsection{Algebraic simplexes and normal crossing divisors}
In our setting and under the injectivity assumption, the property that the Feynman
integral \eqref{Int} is a period of a mixed Tate motive (modulo divergences) would
follow from showing that a certain relative cohomology is a realization of
a mixed Tate motive. Instead of the relative cohomology
$$ H^{n-1}(\P^{n-1}\smallsetminus X_\Gamma, \Sigma_n \smallsetminus (\Sigma_n 
\cap X_\Gamma)) $$
considered in \cite{BEK}, \cite{Blo}, we consider here a different relative
cohomology, where the hypersurface complement $\P^{n-1}\smallsetminus X_\Gamma$
is replaced by the complement $\P^{\ell^2-1}\smallsetminus \cD_\ell$ of the determinant
hypersurface, or better its affine counterpart $\A^{\ell^2}\smallsetminus \hat\cD_\ell$, and
instead of the algebraic simplex $\Sigma_n=\{ t:\, t_1\cdots t_n =0 \}$, we consider a
locus $\hat\Sigma_\Gamma$ 
in $\A^{\ell^2}$ that pulls back to the algebraic simplex $\Sigma_n$ 
under the map $\tau$ of \eqref{taumap} and that consists of a union of $n$ linear subspaces of codimension one in $\A^{\ell^2}$ that meet the image of $\A^n$ 
under $\tau$ along divisors with normal crossings. 
The following observation is a direct consequence of the construction of
the matrix $M_\Gamma(t)$ (\cf~\S\ref{Dhagp}).

\begin{lem}\label{tauinjXi}
Suppose given a graph $\Gamma$ such that the corresponding maps $\tau$ 
and $\tau_i$ are injective. Then the $n$ coordinates $t_i$
associated to the internal edges of $\Gamma$ can be written as preimages 
via the (injective) map $\tau: \A^n \to \A^{\ell^2}$ of $n$ linear
subspaces $X_i$ of codimension~1 in $\A^{\ell^2}$. 
These $n$ subspaces form a divisor $\hat\Sigma_\Gamma$ with normal
crossings in $\A^{\ell^2}$.
\end{lem}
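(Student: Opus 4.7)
The plan is essentially linear algebra. Since $\tau: \A^n \to \A^{\ell^2}$ is a linear injection, the transpose $\tau^*: (\A^{\ell^2})^* \to (\A^n)^*$ is surjective, so for each coordinate $t_i \in (\A^n)^*$ there exists a lift $f_i \in (\A^{\ell^2})^*$ with $f_i \circ \tau = t_i$. I would then set $X_i := \{f_i = 0\}$, a codimension-one linear subspace of $\A^{\ell^2}$; by construction $\tau^{-1}(X_i) = \{t_i = 0\}$. Lemma~\ref{mapstaui} moreover supplies preferred choices for these $f_i$: if an edge $e_i$ is shared between two loops $l_k, l_r$, then $\pm t_i$ appears alone as the $(k,r)$-entry of $M_\Gamma(t)$, so one may take $f_i = \pm x_{kr}$; if $e_i$ is the unique edge of some loop $l_k$ not shared with any other loop, then $t_i$ is recovered from the diagonal entry $x_{kk}$ by subtracting those $f_j$ already assigned to the remaining edges of $l_k$. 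The injectivity of each $\tau_k$, guaranteed by the hypothesis, is precisely what makes this inductive assignment well-defined.

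To show that $\hat\Sigma_\Gamma := X_1 \cup \cdots \cup X_n$ is a normal crossings divisor, it suffices to check that the linear forms $f_1,\ldots,f_n$ are linearly independent on $\A^{\ell^2}$. This is immediate from the injectivity of $\tau$: a relation $\sum_i c_i f_i \equiv 0$ on $\A^{\ell^2}$ would pull back under $\tau$ to $\sum_i c_i t_i \equiv 0$ on $\A^n$, forcing every $c_i = 0$. Since any subset of a linearly independent family of linear forms remains linearly independent, at every point $p \in \A^{\ell^2}$ the differentials of those $f_i$ vanishing at $p$ are linearly independent, which is exactly the condition that the corresponding hyperplanes $X_i$ meet transversely at $p$. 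Hence $\hat\Sigma_\Gamma$ is a simple normal crossings divisor.

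There is no substantial obstacle here beyond careful bookkeeping: the content of the lemma is the reinterpretation of the injectivity of $\tau$ (established in the preceding sections under the combinatorial hypotheses on $\Gamma$) as a statement about a concrete normal crossings divisor in the ambient $\A^{\ell^2}$. I would emphasize the explicit description of the $f_i$ as entries, or row-sums of entries, of the matrix of coordinates on $\A^{\ell^2}$, since this is what will later allow, in \S\ref{DoG} and the subsequent discussion leading to Proposition~\ref{loopgenus}, the loci $\hat\Sigma_\Gamma$ for different graphs $\Gamma$ with fixed $\ell$ and embedding genus $g$ to be immersed into a single common normal crossings divisor $\hat\Sigma_{\ell,g}$ depending only on $\ell$ and $g$.
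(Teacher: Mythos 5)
Your proof is correct and follows essentially the same approach as the paper's, relying on the case analysis from Lemma~\ref{mapstaui} to exhibit explicit linear forms $f_i$ on $\A^{\ell^2}$ pulling back to the coordinates $t_i$. You go a step further than the paper's terse argument by formally invoking surjectivity of $\tau^*$ for the abstract existence of lifts and by spelling out the linear-independence verification that makes $\hat\Sigma_\Gamma$ a normal crossings divisor — a point the paper leaves implicit.
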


\proof Consider the various possible cases for a specific edge listed in 
Lemma~\ref{mapstaui}.
In the third case listed there, where there are two loops $\ell_i$, $\ell_j$ containing 
$e$, and not having
any other edge in common, the variable $t_e$ is immediately expressed as the 
pullback to $\A^n$ of a coordinate in $\A^{\ell^2}$. 
Consider then the second case listed in Lemma~\ref{mapstaui}, where 
an edge $e$ belongs to a single loop $\ell_i$. Under the assumption that
the map $\tau_i$ is injective, then any linear combination of the variables corresponding
to the edges in the $i$-th loop may be written as a linear combination
of coordinates of the $i$-th row. 
\endproof

The considerations in \S\ref{DoG} allow us to improve this observation,
by passing to a larger normal
crossing divisor, so that one can generate all the $\hat\Sigma_\Gamma$ 
from the components of a single normal crossings divisor $\hat\Sigma_{\ell,g}$ 
that only depends on the number of loops of the graph and on the minimal genus 
of the embedding of the graph on a Riemann surface. We formalize this remark
as follows.

\begin{prop}\label{loopgenus} 
There exists a normal crossings divisor $\hat\Sigma_{\ell,g}\subset \A^{\ell^2}$,
which is a union of $N= \binom{f}{2}$ linear spaces
\begin{equation}\label{hatSigmaXi}
\hat \Sigma_{\ell,g}:=X_1\cup \cdots\cup X_N ,
\end{equation}
such that, for all graphs  $\Gamma$ with $\ell$ loops
and genus $g$ closed 2-cell embedding,  the preimage under $\tau=\tau_\Gamma$
of the union $\hat\Sigma_\Gamma$ of a subset  of components of $\hat\Sigma_{\ell,g}$
is the algebraic simplex $\Sigma_n$ in $\A^n$.  More explicitly, the divisor $\hat\Sigma_{\ell,g}$ can
be described by the $N = \binom{f}{2}$ equations
\begin{equation}\label{eqhatSigma}
\left\{
\begin{array}{rl}
x_{ij} &=0\quad 1\le i<j\le f-1 \\
x_{i1}+\cdots + x_{i,f-1} &=0 \quad 1\le i\le f-1 ,
\end{array}
\right.
\end{equation}
where $f=\ell -2g +1$ is the number of faces of the embedding.
\end{prop}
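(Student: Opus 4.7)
The plan is to construct $\hat\Sigma_{\ell,g}$ explicitly using the coordinates of $\A^{\ell^2}$ and then verify both that its $\tau$-preimage yields $\Sigma_n$ and that it is normal crossings. I would begin by fixing, for every admissible $\Gamma$, the basis of $H_1(\Gamma,\Z)$ provided by Lemma~\ref{2cellH1}, ordered so that the first $f-1$ basis elements are the boundaries of the internal faces, and by orienting each face positively as in \S\ref{DoG}. With these conventions, the entries of the upper-left $(f-1)\times(f-1)$ block of $M_\Gamma(t)$ take the form
\[
(M_\Gamma)_{ii}=\sum_{e\in \ell_i}t_e,\qquad (M_\Gamma)_{ij}=-\sum_{e\in \ell_i\cap\ell_j}t_e\quad (i\ne j),
\]
and I would define $\hat\Sigma_{\ell,g}$ as the union of the $\binom{f-1}{2}+(f-1)=\binom{f}{2}$ hyperplanes cut out by the equations \eqref{eqhatSigma}.

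The next step is to identify each coordinate hyperplane $\{t_e=0\}\subset \A^n$ as the $\tau$-preimage of a single component of $\hat\Sigma_{\ell,g}$. By Proposition~\ref{2faces} every edge of $\Gamma$ bounds exactly two faces, and by the standing hypothesis any two faces share at most one edge. If $e$ bounds two internal faces $i<j\le f-1$, then $\tau^*(x_{ij})=-t_e$, so $\tau^{-1}(\{x_{ij}=0\})=\{t_e=0\}$. If instead $e$ bounds the internal face $i$ and the external face, then
\[
\tau^*(x_{i,1}+\cdots+x_{i,f-1})=\sum_{e'\in\ell_i}t_{e'}-\sum_{\substack{j\ne i\\ j\le f-1}}\sum_{e'\in\ell_i\cap\ell_j}t_{e'}=t_e,
\]
because every other edge of $\ell_i$ is shared with a unique internal face and contributes once with each sign in the two sums, while the distinguished edge $e$ contributes only to the diagonal term. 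Letting $\hat\Sigma_\Gamma$ be the union of the $n$ components thus associated to the edges, I obtain $\tau^{-1}(\hat\Sigma_\Gamma)=\Sigma_n$.

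Finally, to check that $\hat\Sigma_{\ell,g}$ is a normal crossings divisor it suffices, since all defining functions are linear, to show that the $\binom{f}{2}$ gradients are globally linearly independent on $\A^{\ell^2}$. The gradients split by row index: in row $i$ they are the coordinate vectors $e^{i,j}$ for $i<j\le f-1$ together with $v^i=e^{i,1}+\cdots+e^{i,f-1}$. Gradients from different rows have disjoint supports, and within a single row $v^i$ is independent of the $e^{i,j}$ with $j>i$ because of its nonzero entry in position $(i,i)$, which does not occur among those coordinate vectors; any vanishing combination then forces all coefficients to be zero by reading off positions $(i,i)$ and then $(i,j)$ in turn. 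The main technical hurdle is the cancellation identifying $\tau^*(x_{i,1}+\cdots+x_{i,f-1})$ with a single $t_e$: it depends simultaneously on the coherent-orientation convention (so that an edge between two internal faces contributes with opposite signs in the corresponding diagonal and off-diagonal entries) and on the combinatorial hypothesis that any two faces share at most one edge. Once these two ingredients are set up uniformly for all $\Gamma$ of given loop number and genus, the remainder of the argument proceeds as above.
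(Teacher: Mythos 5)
Your proof is correct and follows essentially the same route as the paper's: both arguments work with the upper-left $(f-1)\times(f-1)$ block of $M_\Gamma(t)$ (equivalently, the $(\ell-2g)\times(\ell-2g)$ minor of Lemma~\ref{lm2gminor}) under the coherent orientation convention of \S\ref{DoG}, and define $\hat\Sigma_{\ell,g}$ by the equations \eqref{eqhatSigma}. You supply two details that the paper leaves implicit, namely the telescoping cancellation showing that $\tau^*(x_{i,1}+\cdots+x_{i,f-1})$ collapses to a single $t_e$ (or to $0$ when $\ell_i$ shares no edge with the external face) under the hypothesis that any two faces meet in at most one edge, and the linear independence of the defining gradients, which justifies the normal crossings claim in the statement.
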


\proof Using Lemma~\ref{lm2gminor}, we know that the injectivity of an 
$(\ell -2g)\times (\ell -2g)$ minor of the matrix $M_\Gamma$ suffices to control 
the injectivity of the map $\tau$. 
We can in fact arrange so that the minor is the upper-left part of the
$\ell\times \ell$ ambient matrix. 
Then, as in Lemma~\ref{tauinjXi}, the hyperplanes in $\A^n$ associated to the 
coordinates $t_i$ can be obtained by
pulling back linear spaces along  this minor.
On the diagonal of the $(f-1)\times (f-1)$
submatrix we find all edges making up each face, with a positive
sign. It follows that the pull-backs of the equations \eqref{eqhatSigma}
produce a list of all the edge variables, possibly with redundancies.
The components of $\hat\Sigma_{\ell,g}$ that form the divisor $\hat\Sigma_\Gamma$
are selected by eliminating those components of $\hat\Sigma_{\ell,g}$ that contain 
the image of the graph hypersurface  (\ie
coming from the zero entries of the matrix $M_\Gamma(t)$).
\endproof

\medskip

Thus, for every $\Gamma$ satisfying the conditions recalled at the beginning
of the section (for example, every 3-edge connected planar graph, or every
graph obtained from one of these by adding looping edges or subdividing
edges), {\em the nature of period
appearing as a Feynman integral over $\Gamma$ in the sense explained
in \S\ref{DetSec} is controlled by the motive 
\begin{equation}\label{relmotGamma}
 \m (\A^{\ell^2}\smallsetminus \hat\cD_\ell, \hat\Sigma_\Gamma \smallsetminus
(\hat\cD_\ell \cap \hat\Sigma_\Gamma)), 
\end{equation}
for a normal crossing divisor
$\hat\Sigma_\Gamma\subset \A^{\ell^2}$ consisting of a subset of components of 
the fixed\/} (for given $\ell$ and $g$) {\em normal crossing divisor 
$\hat\Sigma_{\ell,g}\subset \A^{\ell^2}$ introduced above.\/}

More explicitly,  the boundary of the
topological simplex $\sigma_n$, that is, the domain of integration of the
Feynman integral in Lemma~\ref{periodDell}, satisfies 
\begin{equation}\label{boundaryhatSigma}
\tau(\partial \sigma_n)\subset \hat \Sigma_\Gamma \subset \hat\Sigma_{\ell,g}. 
\end{equation}
Thus, the main goal here
will be to understand the motivic nature of the complement
\begin{equation}\label{hatSigmaMot}
\hat\Sigma_\Gamma \smallsetminus (\hat\cD_\ell \cap \hat\Sigma_\Gamma).
\end{equation}

Since $\hat \Sigma_\Gamma$ consists of components from the fixed
normal crossing divisor $\hat\Sigma_{\ell,g}$, this question will be recast
in terms that only depend on $\ell$ and $g$: 
we show in Corollary~\ref{Xicap} below that, using the inclusion--exclusion
principle applied to the components of $\hat\Sigma_{\ell,g}$, it is possible to
answer these questions simultaneously for all the divisors $\hat\Sigma_\Gamma$,
for all graphs with $\ell$ loops and genus $g$, by investigating the nature of a
motive constructed out of the intersections of the components of the divisor
$\hat\Sigma_{\ell,g}$. 

Notice in fact that one can derive the case of $\hat\Sigma_{\ell,g}$ from
the case of $g=0$, since $\hat\Sigma_{\ell,g} \subseteq \hat\Sigma_{\ell,0}$,
corresponding to an $(\ell -2g)\times (\ell -2g)$ minor of the matrix 
$M_\Gamma(t)$.

\smallskip

There are general and explicit conditions (see \cite{Gon}, Proposition~3.6)
implying that the relative cohomology of a pair $(X,Y)$ comes from a mixed Tate 
motive $\m(X,Y)$ (see also \cite{GonMan} for a concrete
application to the geometric case of moduli spaces of curves). In general,
these rely on assumptions on the divisors involved and
their associated stratification, which may not directly apply to the cases 
considered here. We discuss here a direct approach to constructing
stratifications of our loci $\hat\Sigma_{\ell,g}\smallsetminus (\hat\cD_\ell
\cap \hat\Sigma_{\ell,g})$ that can be used to investigate the nature of the 
motive \eqref{relmotGamma}.

\subsection{Inclusion--exclusion}\label{IE}
The procedure we follow will be the one outlined above, based on the
divisors $\hat\Sigma_{\ell,g}$ and the inclusion--exclusion principle.
Since we already know by the results of \S \ref{MotSec}  that the complement
$X=\A^{\ell^2}\smallsetminus \hat\cD_\ell$ is a mixed Tate motive, we aim at providing 
a direct argument showing that 
$Y=\hat\Sigma_\Gamma \smallsetminus (\hat\Sigma_\Gamma \cap \hat\cD_\ell)$ 
also is a mixed Tate motive. The same argument used in~\S\ref{MotSec} based 
on the distinguished triangles in the Voevodsky triangulated category of mixed 
Tate motives \cite{Voev} would then show that the relative cohomology of the 
pair $(X,Y)$ comes from an object $\m(X,Y) \in Obj(\cD\cM\cT_\Q)$.

As a first step we transform the problem of a complement in a union of linear 
spaces into an equivalent formulation in terms of intersections of linear spaces, 
using inclusion--exclusion.
For a collection $\{ Z_i \}_{i\in I}$ of varieties $Z_i$ we set 
\begin{equation}\label{ZIcirc}
Z_I^\circ :=(\cap_{i\in I} Z_i)\smallsetminus 
(\cup_{j\not\in I} Z_j).
\end{equation}
Notice that, for all $I$,
\[
\cap_{i\in I} Z_i = \amalg_{J\supseteq I} Z_J^\circ .
\]
This is a {\em disjoint\/} union. We then have the following result.

\begin{lem}\label{incexc}
Let $Z_1,\dots,Z_m$ be varieties; assume that the intersections
$\cap_{i\in I} Z_i$ are mixed Tate, for all nonempty $I\subseteq \{1,\dots,m\}$.
Then $Z_1\cup \cdots\cup Z_m$ is mixed Tate.
\end{lem}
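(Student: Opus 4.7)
My plan is to prove the lemma by induction on $m$, using the same kind of distinguished-triangle argument that was used in the proof of Theorem~\ref{DellMTM}. The base case $m=1$ is trivial since $Z_1$ is mixed Tate by hypothesis (taking $I=\{1\}$).

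For the inductive step, I would split $Z_1\cup\cdots\cup Z_m$ as $U\cup V$ where $U=Z_1\cup\cdots\cup Z_{m-1}$ and $V=Z_m$. The key move is to work with the closed embedding $U\cap V\hookrightarrow V$, which produces a distinguished triangle
\[
\m(U\cap V)\to \m(V)\to \m(V\smallsetminus U)\to \m(U\cap V)[1]
\]
in $\cD\cM\cT_\K$ (via Proposition~4.1.4 of \cite{Voev}, as invoked in Theorem~\ref{DellMTM}). Note that $U\cap V=\bigcup_{i=1}^{m-1}(Z_i\cap Z_m)$ is a union of $m-1$ varieties whose intersections $\bigcap_{i\in I}(Z_i\cap Z_m)=\bigcap_{i\in I\cup\{m\}}Z_i$ are mixed Tate by hypothesis. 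The inductive hypothesis, applied to this collection of $m-1$ varieties, then yields that $\m(U\cap V)$ is mixed Tate. Since $\m(V)=\m(Z_m)$ is mixed Tate by hypothesis, the distinguished triangle forces $\m(V\smallsetminus U)$ to be isomorphic to an object of $\cD\cM\cT_\K$.

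Next I would use the fact that $U$ is closed in $U\cup V$, with open complement $(U\cup V)\smallsetminus U=V\smallsetminus U$. A second distinguished triangle
\[
\m(U)\to \m(U\cup V)\to \m(V\smallsetminus U)\to \m(U)[1]
\]
then puts $\m(U\cup V)$ into the middle of a triangle whose outer terms are mixed Tate: $\m(U)$ is mixed Tate by the inductive hypothesis applied to the collection $Z_1,\dots,Z_{m-1}$ (all of whose intersections are among those assumed mixed Tate), and $\m(V\smallsetminus U)$ was handled in the previous step. Invariance of $\cD\cM\cT_\K$ under the shift together with closure of full triangulated subcategories under isomorphism, as recalled in the proof of Theorem~\ref{DellMTM}, then gives that $\m(Z_1\cup\cdots\cup Z_m)$ is isomorphic to an object in $\cD\cM\cT_\K$, completing the induction.

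I do not expect any serious obstacle: the argument is purely formal, given the distinguished-triangle machinery already set up for the proof of Theorem~\ref{DellMTM}. The only care needed is in the bookkeeping of the inductive hypothesis—checking that both subcollections $\{Z_1,\dots,Z_{m-1}\}$ and $\{Z_1\cap Z_m,\dots,Z_{m-1}\cap Z_m\}$ inherit the ``all intersections are mixed Tate'' assumption, which is immediate because their intersections are themselves intersections of the original $Z_i$'s.
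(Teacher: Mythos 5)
Your proof is correct, but it takes a genuinely different route from the paper's. You argue by induction on the number $m$ of varieties, pairing $U = Z_1\cup\cdots\cup Z_{m-1}$ against $V = Z_m$ and applying two distinguished triangles (one for $U\cap V\hookrightarrow V$ to get $\m(V\smallsetminus U)$, and one for $U\hookrightarrow U\cup V$ to conclude); the inductive hypothesis is invoked twice, once for $\{Z_i\}_{i<m}$ and once for $\{Z_i\cap Z_m\}_{i<m}$, and your bookkeeping that both subcollections inherit the hypothesis is correct. The paper instead introduces the locally closed strata $Z_I^\circ = (\cap_{i\in I}Z_i)\smallsetminus(\cup_{j\notin I}Z_j)$, observes that they partition the union, and runs a \emph{downward} induction on $|I|$: it shows each $Z_I^\circ$ is mixed Tate using the identity $Z_I^\circ = (\cap_{i\in I}Z_i)\smallsetminus (\amalg_{J\supsetneq I}Z_J^\circ)$, and then recovers the union as the disjoint union $\amalg_{I\ne\emptyset}Z_I^\circ$. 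The two arguments are logically equivalent, but the paper's version has the advantage of making the stratification $\{Z_I^\circ\}$ explicit, which the authors exploit immediately afterward (Corollary~\ref{Xicap} and the three-loop computation in \S\ref{MotFrameSec} are organized around precisely these strata). Your version is slightly more compact as a standalone proof, but it hides the stratification that the rest of the paper actually needs.
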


\proof We want to show that $Z_I^\circ$ is mixed Tate for all nonempty $I\subseteq \{1,\dots,m\}$.
To see this, notice that it is true by hypothesis for $I=\{1,\dots,m\}$,
since in this case $Z_I^\circ =\cap_{i\in I} Z_i$. Thus, it suffices to
prove that if it is true for all $I$ with $|I|>k$, then it is true for all 
$I$ with $|I|=k$ (provided $k\ge 1$). Recall that, as we already used in \S  \ref{MotSec} above,
the distinguished triangles in the Voevodsky category of mixed Tate motives imply that, if
$X\hookrightarrow Y$ is a closed embedding, and 
$U=Y \smallsetminus X$ the complement, then if any two of $X, Y, U$ are 
mixed Tate so is the third as well. The result then 
follows from the combined use of this property, the hypothesis, and 
the identity
\[
Z_I^\circ= (\cap_{i\in I} Z_i) \smallsetminus
(\amalg_{J\supsetneq I} Z_J^\circ)\quad.
\]
Since we have
\[
Z_1\cup \cdots \cup Z_m=\amalg_{I\ne \emptyset} Z_I^\circ\quad,
\]
we conclude that the union $Z_1\cup \cdots \cup Z_m$ is mixed Tate, 
again by the property of mixed Tate motives mentioned above.
\endproof

Now, we have observed that for every graph $\Gamma$ with $\ell$ loops
and genus $g$ (and satisfying the condition specified at the beginning of
the section) the divisor $\hat\Sigma_\Gamma$ consists of components
of the divisor $\hat\Sigma_{\ell,g}$. Therefore, the strata of 
$\hat\Sigma_\Gamma$ are unions of strata from $\hat\Sigma_{\ell,g}$.
We can then reformulate our main problem as follows.

\begin{cor}\label{Xicap}
Let, as above, $\hat\Sigma_{\ell,g} =X_1\cup \cdots \cup X_N$ and let $\hat\Sigma_\Gamma$
be the divisors constructed out of subsets of components of $\hat\Sigma_{\ell,g}$, associated
to the individual graphs. Then, for all graphs $\Gamma$ with $\ell$ loops and genus $g$,
the complement $\hat\Sigma_\Gamma \smallsetminus (\hat \cD_\ell \cap \hat\Sigma_\Gamma)$ 
is mixed Tate if the locus
\begin{equation}\label{XicapDell}
(\cap_{i\in I} X_i) \smallsetminus \hat \cD_\ell
\end{equation}
is mixed Tate for all $I\subseteq \{1,\dots,N\}$, $I\ne \emptyset$.
\end{cor}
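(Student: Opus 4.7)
The plan is to deduce the corollary as a direct application of Lemma \ref{incexc}, once the relevant collection of subvarieties is chosen. Write $\hat\Sigma_\Gamma = \bigcup_{i\in S} X_i$ for some subset $S\subseteq\{1,\dots,N\}$; this is possible by Proposition \ref{loopgenus}. Inside $X:=\A^{\ell^2}\smallsetminus\hat\cD_\ell$, which is mixed Tate by Theorem \ref{DellMTM}, I would set
\[
Z_i := X_i\smallsetminus (X_i\cap\hat\cD_\ell)=X_i\cap X, \qquad i\in S.
\]
Then by distributing the intersection with the (open) complement of $\hat\cD_\ell$,
\[
\hat\Sigma_\Gamma\smallsetminus(\hat\cD_\ell\cap\hat\Sigma_\Gamma)=\Bigl(\bigcup_{i\in S} X_i\Bigr)\cap X=\bigcup_{i\in S} Z_i.
\]

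Next I would check the hypothesis of Lemma \ref{incexc} for this collection. For every nonempty $I\subseteq S$,
\[
\bigcap_{i\in I} Z_i=\Bigl(\bigcap_{i\in I} X_i\Bigr)\cap X=\Bigl(\bigcap_{i\in I} X_i\Bigr)\smallsetminus\hat\cD_\ell,
\]
which is exactly the locus \eqref{XicapDell}, and is mixed Tate by the standing assumption of the corollary. Lemma \ref{incexc} then produces, for each nonempty $I\subseteq S$, the mixed Tate property of the locally closed stratum
\[
Z_I^{\circ}=\Bigl(\bigcap_{i\in I} Z_i\Bigr)\smallsetminus\Bigl(\coprod_{J\supsetneq I} Z_J^{\circ}\Bigr),
\]
by a descending induction on $|I|$ that uses only the two-out-of-three property of mixed Tate motives in distinguished triangles associated to closed embeddings (\cite{Voev}, Proposition 4.1.4), as recalled in the proof of Theorem \ref{DellMTM}.

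Finally, combining all the $Z_I^{\circ}$ via the disjoint decomposition
\[
\bigcup_{i\in S} Z_i=\coprod_{\emptyset\ne I\subseteq S} Z_I^{\circ}
\]
and applying the same two-out-of-three property inductively yields that $\hat\Sigma_\Gamma\smallsetminus(\hat\cD_\ell\cap\hat\Sigma_\Gamma)$ is mixed Tate, which is the content of the corollary. There is no real obstacle here: the statement is essentially a packaging of Lemma \ref{incexc} applied to the open stratification induced on $\hat\Sigma_\Gamma$ by the ambient components $X_1,\dots,X_N$ of $\hat\Sigma_{\ell,g}$. The only point worth emphasizing is that the indexing set $I$ is allowed to range over arbitrary nonempty subsets of $\{1,\dots,N\}$ rather than just of $S$; this causes no harm, since the strata indexed by $I\not\subseteq S$ simply do not appear in the decomposition of $\hat\Sigma_\Gamma$, and the extra mixed Tate hypotheses are automatically available.
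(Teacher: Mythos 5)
Your proof is correct and takes essentially the same approach as the paper, whose entire proof is ``This is a direct consequence of Lemma~\ref{incexc}.'' You have simply made explicit the choice of the collection $Z_i = X_i \cap (\A^{\ell^2}\smallsetminus\hat\cD_\ell)$ and the identification of its intersections with the loci in \eqref{XicapDell}, which is exactly the intended application.
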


\proof This is a direct consequence of Lemma~\ref{incexc}. \endproof

Corollary~\ref{Xicap} encapsulates the main reformulation of our problem,
mentioned at the end of \S\ref{intro}: the target becomes that of
proving that the loci $(\cap_{i\in I} X_i) \smallsetminus \hat \cD_\ell$
determined by the normal crossing divisor $\hat\Sigma_{\ell,g}$ are
mixed Tate.
This result shows that, although in principle one is working with a different
divisor $\hat\Sigma_\Gamma$ for each graph $\Gamma$, in fact it suffices to
consider the divisor $\hat\Sigma_{\ell,g}$, for fixed number of loops $\ell$
and genus $g$.
It is conceivable that the loci associated to a specific graph (that is, to
a specific choice of components of $\hat\Sigma_{\ell,g}$) may be 
mixed Tate while the loci corresponding to the whole divisor 
$\hat\Sigma_{\ell,g}$ is not. As we are seeking an explanation that
would imply that {\em all\/} periods arising from Feynman integrals
are periods of mixed Tate motives, we will optimistically venture 
that {\em all loci $(\cap_{i\in I} X_i) \smallsetminus \hat \cD_\ell$ may in 
fact turn out to be mixed Tate,\/} for all $\ell$ and for $g=0$: 
by Corollary~\ref{Xicap}, it would follow that all complements
$\hat\Sigma_\Gamma \smallsetminus (\hat \cD_\ell \cap 
\hat\Sigma_\Gamma)$ are mixed Tate, for {\em all\/} graphs $\Gamma$
(satisfying our running combinatorial hypothesis).

Our task is now to formulate this working hypothesis as a more
concrete problem. The intersection $\cap_{i\in I} X_i$
is a linear subspace of codimension $|I|$ in $\A^{\ell^2}$;
in general, the intersection of a linear subspace with the determinant
is {\em not\/} mixed Tate (for example, the intersection of a general
$\A^3$ with $\hat D_3$ is a cone over a genus-$1$ curve).
Thus, we have to understand in what sense the intersections
$\cap_{i\in I} X_i$ appearing in Corollary~\ref{Xicap} are special;
the following lemma determines some key features of these subspaces.

\begin{lem}\label{linFi}
Let $E$ be a fixed $\ell$-dimensional vector space, as in \S \ref{SecDellMot} above.
Every $I\subseteq \{1,\dots,N\}$ as above determines
a choice of linear subspaces $V_1,\dots,V_\ell$ of $E$, such that
\begin{equation}\label{capXiFk}
\cap_{k\in I} X_k = \{(v_1,\dots,v_\ell)\in \A^{\ell^2} \,|\, \forall i, v_i\in V_i\} .
\end{equation}
(Here, we denote an $\ell\times\ell$ matrix in $\A^{\ell^2}$ by its 
$\ell$ row-vectors $v_i\in E$.)

Further, $\dim V_i \ge i-1$.
Further still, there exists a basis $(e_1,\dots,e_\ell)$ of $E$ such that each 
space $V_i$ is the span of a subset (of cardinality $\ge i-1$) of the vectors 
$e_j$.
\end{lem}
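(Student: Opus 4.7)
The plan is to exploit the very explicit form of the defining equations \eqref{eqhatSigma} of $\hat\Sigma_{\ell,g}$. The structural observation is that each such equation---either $x_{ij}=0$ (with $1\le i<j\le f-1$) or $x_{i1}+\cdots+x_{i,f-1}=0$---depends only on entries from a single row, namely the $i$-th. Writing a matrix in $\A^{\ell^2}$ as a tuple $(v_1,\dots,v_\ell)$ with $v_i\in E$ the $i$-th row, it follows that $\bigcap_{k\in I}X_k$ decouples row-by-row: each $v_i$ is required to lie in the linear subspace $V_i\subseteq E$ cut out by the conditions in \eqref{eqhatSigma} (indexed by $I$) that involve row $i$. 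This gives \eqref{capXiFk}.

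I then bound $\dim V_i$ by counting the conditions imposed on row~$i$. Rows with $i\ge f$ are unconstrained by any of the equations in \eqref{eqhatSigma}, so $V_i=E$ and $\dim V_i=\ell\ge i-1$. For $1\le i\le f-1$, the possible conditions on $v_i$ are $e_j^*(v_i)=0$ for $j\in\{i+1,\dots,f-1\}$ (at most $f-1-i$ conditions) together with at most one sum condition $\phi(v_i)=0$, where $\phi:=e_1^*+\cdots+e_{f-1}^*$. Hence $v_i$ satisfies at most $f-i$ linear conditions and, using $f=\ell-2g+1$,
\[
\dim V_i \ \ge\ \ell-(f-i)\ =\ i+(2g-1)\ \ge\ i-1.
\]

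For the basis assertion, let
\[
\mathcal{S}\ =\ \{\phi,\ e_2^*,\ e_3^*,\,\dots,\,e_{f-1}^*,\ e_f^*,\,\dots,\,e_\ell^*\}
\]
denote the collection of \emph{all} linear forms on $E$ that appear in the definition of any $V_i$, for any choice of $I$. I claim $\mathcal{S}$ is a basis of $E^*$: the last $\ell-f+1$ forms are supported on the complement of $\mathrm{span}(e_1,\dots,e_{f-1})$, so one only needs $\phi,e_2^*,\dots,e_{f-1}^*$ to be linearly independent on $\mathrm{span}(e_1,\dots,e_{f-1})$, which is immediate from $\phi-(e_2^*+\cdots+e_{f-1}^*)=e_1^*$. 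Letting $(b_1,\dots,b_\ell)$ be the basis of $E$ dual to $\mathcal{S}$ (in any fixed ordering) and relabelling these vectors as $(e_1,\dots,e_\ell)$, each $V_i$ is the kernel of a subset of~$\mathcal{S}$ and hence the span of the complementary subset of the new basis; the cardinality estimate is just the dimension bound already established.

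The one conceptual point---and the only place the argument is not completely mechanical---is noticing that a \emph{single} change of basis simultaneously linearizes all the sum conditions: this is possible because $\phi$ depends only on $\ell$ and $g$, not on the row index~$i$. Everything else reduces to bookkeeping.
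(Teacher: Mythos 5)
Your proof is correct and follows essentially the same route as the paper's: decouple the defining equations of $\hat\Sigma_{\ell,g}$ row by row, bound $\dim V_i$ by counting the at most $f-i$ conditions on row $i$ (with $f=\ell-2g+1$), and pass to the basis dual to $(x_1+\cdots+x_{\ell-2g},\,x_2,\dots,x_\ell)$ to realize each $V_i$ as a coordinate subspace. The only cosmetic difference is that you spell out why this collection of forms is a basis of $E^*$ (via $e_1^*=\phi-e_2^*-\cdots-e_{f-1}^*$), which the paper takes as evident.
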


\proof 
Recall (Proposition~\ref{loopgenus}) that the components $X_k$ of 
$\hat\Sigma_{\ell,g}$ consist of matrices for which either
the $(i,j)$ entry $x_{ij}$ equals~$0$, for $1\le i<j\le \ell-2g$, or 
\[
x_{i1}+\cdots +x_{i,\ell-2g}=0
\]
for $1\le i\le \ell-2g$. Thus,
each $X_k$ consists of $\ell$-tuples $(v_1,\dots,v_\ell)$ for which
exactly one row $v_i$ belongs to a fixed hyperplane of $E$,
and more precisely to one of the hyperplanes
\begin{equation}\label{eqhypers}
x_1+\cdots+x_{\ell-2g}=0\quad,\quad
x_2=0\quad,\quad
\cdots\quad,\quad
x_{\ell-2g}=0
\end{equation}
(with evident notation).
The statement follows by choosing $V_i$ to be the intersection of the
hyperplanes corresponding to the $X_k$ in row~$i$, among those listed 
in \eqref{eqhypers}. Since there are at most $\ell-2g-i+1$ hyperplanes $X_k$
in the $i$-th row, 
\[
\dim V_i \ge \ell-(\ell-2g-i+1)=2g+i-1\ge i-1\quad.
\]
Finally, to obtain the basis $(e_1,\dots, e_\ell)$ mentioned in the statement,
simply choose the basis dual 
to the basis $(x_1+\cdots +x_{\ell-2g}, x_2,\dots, x_\ell)$ of the dual space 
to $E$.
\endproof

\subsection{The main questions}\label{Tmq}

In view of Lemma~\ref{linFi}, for any choice $V_1,\dots,V_\ell$ of subspaces
of an $\ell$-dimensional space~$E$, let
\begin{equation}\label{FkDell}
\F(V_1,\ldots, V_\ell):= \{(v_1,\dots,v_\ell)\in \A^{\ell^2} \,|\, \forall k, v_k\in V_k\} \smallsetminus \hat \cD_\ell
\end{equation}
denote the complement of the determinant hypersurface in the set of
matrices determined by $V_1,\dots,V_\ell$. An optimistic version of the
question we are led to is:\medskip

{\bf Question~$\text{I}_\ell$.} {\em Let $V_1,\dots,V_\ell$ be subspaces of an
$\ell$-dimensional vector space. Is the locus
$\F(V_1,\dots,V_\ell)$ mixed Tate?\/}\medskip

By Corollary~\ref{Xicap} and Lemma~\ref{linFi}, an affirmative answer to
Question~$\text{I}_\ell$ implies that the complement $\hat\Sigma_\Gamma
\smallsetminus (\hat D_\ell \cap \hat \Sigma_\Gamma)$ is mixed Tate
for all graphs $\Gamma$ with $\ell$ loops and satisfying the combinatorial 
condition given at
the beginning of this section. Modulo divergence issues,
this would imply that all Feynman integrals corresponding to these graphs
are periods of mixed Tate motives. We will give an affirmative answer to
Question~$\text{I}_\ell$ for $\ell\le 3$, in \S\ref{MotFrameSec}.

As Lemma~\ref{linFi} is in fact more precise, the same conclusion would
be reached by answering affirmatively the following weak version of 
Question~$\text{I}_\ell$:\medskip

{\bf Question~$\text{II}_\ell$.} {\em Let $(e_1,\dots,e_\ell)$ be a basis of an 
$\ell$-dimensional vector space. For $i=1,\dots,\ell$, let $V_i$ be a 
subspace spanned by a choice of $\ge i-1$ basis vectors. 
Is $\F(V_1,\dots,V_\ell)$ mixed Tate?\/}\medskip

Notice that, when $V_k =E$ for all $k$, both questions reproduce the statement 
about the hypersurface complement $\A^{\ell^2}\smallsetminus \hat\cD_\ell$ 
proved in \S \ref{SecDellMot}.
One might expect that a similar inductive procedure would provide a simple
approach to these questions. It is natural to consider the following apparent
refinement of Question~$\text{I}_\ell$ for $1\le r\le \ell$ (and we could similarly 
consider an analogous refinement Question~$\text{II}'_{\ell,r}$ of 
Question~$\text{II}_\ell$):\medskip

{\bf Question~$\text{I}'_{\ell,r}$.} {\em In a vector space $E$ of dimension~$\ell$, 
and for any choice of subspaces $V_1,\dots, V_r$ of $E$, let
\[
\F_\ell(V_1,\dots,V_r)=\text{$\{(v_1,\dots,v_r) \,|\, v_i\in V_i$ and $\dim 
\langle v_1,\dots v_r\rangle=r\}$}\quad.
\]
Is the locus $\F_\ell(V_1,\dots,V_r)$ mixed Tate?\/}\medskip

Question~$\text{I}_\ell$ is then the same as Question~$\text{I}'_{\ell,\ell}$;
and Question~$\text{I}'_{\ell,r}$ is obtained by taking $V_{r+1}=\cdots
=V_\ell=E$ in Question~$\text{I}_\ell$: thus, answering 
Question~$\text{I}_\ell$ is equivalent to answering 
Question~$\text{I}'_{\ell,r}$ for all $r\le \ell$.

Now, for all $\ell$, the case $r=1$ is immediate: $\F_\ell(V_1)$ consists
of all nonzero vectors in $V_1$, which is trivially mixed Tate. 
One could then hope that an inductive procedure may yield a method
for increasing~$r$. This is carried out in \S\ref{MotFrameSec} for
$r=2$ and $r=3$ (in particular, we give an affirmative answer to
Question~$\text{I}_\ell$ for $\ell\le 3$); but this approach quickly leads to 
the analysis 
of several different cases, with an increase in complexity that makes
further progress along these lines seem unlikely. The main problem is 
that once all tuples $(v_1,\dots,v_k)$ of linearly independent
vectors such that $v_i\in V_i$ have been constructed, controlling
\[
\dim (V_{k+1}\cap \langle v_1,\dots,v_k\rangle)
\]
requires consideration of a range of possibilities that depend on 
the position of the vectors $v_i$ and their spans vis-a-vis the position
of the next space $V_{k+1}$. The number of these possibilities increases
rapidly.
A similar approach to the simpler (but sufficient for our purposes) Question~II 
does not appear to circumvent this problem.

\medskip

There are special cases where an inductive argument works
nicely. We mention two here.

\begin{itemize}
\item Suppose that all the $V_k$ in \eqref{FkDell} are hyperplanes in $E$.
Then  $\F(V_1,\dots,V_\ell)$ is mixed Tate.
\end{itemize}

In this case, following the inductive argument 
mentioned above, the only possibilities for $V_{k+1}\cap \langle
v_1,\dots,v_k\rangle$ are $\langle v_1,\dots,v_k\rangle$, and a 
hyperplane in $\langle v_1,\dots,v_k\rangle$. The first occurs when
\[
\langle v_1,\dots,v_k\rangle\subseteq V_k .
\]
This locus is under control, since it amounts to doing the whole
construction in $V_k$ rather than $E$, \ie one can argue by induction on the 
dimension of $E$. Thus, this locus is mixed Tate. The other case gives a locus
that is the complement of this mixed Tate variety in another mixed Tate variety, 
hence, by the same argument about closed embeddings and distinguished triangles
used in \S  \ref{MotSec}, it is also mixed Tate.

\begin{itemize}
\item Suppose $V_1\subseteq V_2\subseteq\cdots\subseteq V_r$; then
$\F_\ell(V_1,\dots,V_r)$ is mixed Tate.
\end{itemize}

Indeed, in this case $\langle v_1,\dots,v_k\rangle \subseteq V_{k+1}$
for all $k$. The condition on $v_{k+1}$ is simply $v_{k+1}\in V_{k+1}
\smallsetminus \langle v_1,\dots,v_k\rangle$, and these conditions
clearly produce a mixed Tate locus. Arguing as in \S\ref{SecDellMot},
the class of $\F_\ell(V_1,\dots,V_r)$ is immediately seen to equal
\[
(\L^{d_1}-1)(\L^{d_2}-\L)(\L^{d_3}-\L^2)\cdots (\L^{d_r}- \L^{r-1})
\]
in this case, where $d_k=\dim V_k$.

\subsection{A reformulation}

For given subspaces $V_i\subset E$, 
the inductive approach suggested by Question~$\text{I}'_{\ell,r}$ 
aims at constructing the
set of $\ell$-uples $(v_1, \ldots, v_\ell)$ with the two properties 
\begin{enumerate}
\item $v_i\in V_i$;
\item $\dim \langle v_1, \ldots, v_r \rangle = r$, for all $r$,
\end{enumerate}
and proving inductively that these loci are mixed Tate, in order to show
that the loci \eqref{FkDell} are mixed Tate. 
By (2), the sets
\[
0 \subset \langle v_1\rangle \subset \langle v_1,v_2\rangle \subset 
\cdots \subset \langle v_1,\dots,v_\ell \rangle = E
\]
form a complete flag in $E$; let $E_r=\langle v_1,\dots,v_r\rangle$.
Our main question can then be phrased in terms of these moving
complete flags:\medskip

{\bf Question~$\text{III}_\ell$.} {\em Let $V_1,\dots,V_\ell$ be subspaces
of an $\ell$-dimensional vector space $E$, and let $d_i,e_i$ be integers. 
Is the locus $\text{Flag}_{\ell, \{d_i,e_i\}}(\{V_i\})$ of complete flags
\[
0 \subset E_1 \subset E_2 \subset \cdots \subset E_\ell = E
\]
such that 
\begin{itemize}
\item $\dim E_i \cap V_i = d_i$
\item $\dim E_i \cap V_{i+1} = e_i$
\end{itemize} 
mixed Tate?\/}\medskip

An affirmative answer to this question (for all choices of $d_i$, $e_i$) would
give an affirmative answer to our main Question~$\text{I}_\ell$. Indeed, 
the locus $\F(V_1,\dots,V_\ell)$ is a fibration on the
locus $\text{Flag}_{\ell, \{d_i,e_i\}}(\{V_i\})$ determined in Question~$\text{III}_\ell$. 
Concretely, the procedure constructing the tuples $(v_1,\dots,v_\ell)$ in 
$\F(V_1,\dots,V_\ell)$ over a flag $E_\bullet$ in this locus 
is:
\begin{itemize}
\item Choose $v_1\in (E_1\cap V_1)\smallsetminus \{ 0 \}$;
\item Choose $v_2 \in (E_2\cap V_2)\smallsetminus  (E_1 \cap V_2)$;
\item Choose $v_3 \in (E_3 \cap V_3)\smallsetminus (E_2 \cap V_3)$;
\item etc.
\end{itemize}

The class of $\F(V_1,\dots,V_\ell)$ in the Grothendieck group would then
be computed as a sum of terms
\[
[\text{Flag}_{\ell, \{d_i,e_i\}}(\{V_i\})] \cdot (\L^{d_1}-1)(\L^{d_2}-\L^{e_1})
(\L^{d_3}-\L^{e_2})\cdots (\L^{d_r}-\L^{e_{r-1}})\quad.
\]
The set of flags $E_\bullet$ satisfying conditions analogous to those 
specified in Question~$\text{III}_\ell$ with respect to all terms of a
fixed flat $E'_\bullet$ (that is: with prescribed $\dim(E_i\cap E'_j)$
for all $i$ and~$j$) is a cell of the corresponding Schubert variety in
the flag manifold.

It follows that $\text{Flag}_{\ell, \{d_i,e_i\}}(\{V_i\})$ is a disjoint union of cells, 
and thus certainly mixed Tate, if the $V_i$'s form a complete flag. This gives
a high-brow alternative viewpoint for the last case mentioned in~\S\ref{Tmq}. 

By the same token, the set of flags $E_\bullet$ for which $\dim E_i\cap F$
is a fixed constant is a union of Schubert cells in the flag manifold, for all 
subspaces $F$.
It follows that the locus $\text{Flag}_{\ell, \{d_i,e_i\}}(\{V_i\})$ of 
Question~$\text{III}_\ell$ is an {\em intersection of unions of Schubert
cells\/} in the flag manifold. Such loci were studied e.g.~in \cite{GeSe},
\cite{GeGoMaSe}, \cite{ShaShaVai1}, \cite{ShaShaVai2}.

\section{Motives and manifolds of frames}\label{MotFrameSec}

The manifolds of $r$-frames in a given vector space are defined as follows.

\begin{defn}\label{framesDef}
Let $\F(V_1, \ldots, V_r)\subset V_1 \times \cdots \times V_r$ 
denote the locus of $r$-tuples of linearly independent
vectors in a vector space, where each $v_i$ is constrained to belong to the given
subspace $V_i$. 
\end{defn}

These are the loci appearing in Question~$\text{I}'_{\ell,r}$; we now omit the
explicit mention of the dimension $\ell$ of the ambient space.
The question we consider here is the one formulated
in \S\ref{Tmq},
namely to establish when the motive of the manifold of frames
$\F(V_1,\ldots,V_r)$ is mixed Tate. 
A possible strategy to answering this question is based on the following
simple observations. 

\begin{lem}\label{fromproj}
Let $V_1,\dots,V_r$ be subspaces of a given vector space $V$.
Let $v_r\in V_r$, and let $\pi: V \to V':=V/\langle v_r\rangle$ be the natural
projection. Let $v_1,\dots,v_{r-1}$ be vectors such that $v_i\in V_i$, and
$\pi(v_1),\dots, \pi(v_{r-1})$ are linearly independent. Then
$v_1,\dots, v_r$ are linearly independent.
\end{lem}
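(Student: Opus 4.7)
The plan is to unfold the definition of linear independence and push the hypothesis through the projection map. Suppose we have a linear relation
\[
c_1 v_1 + \cdots + c_{r-1}v_{r-1} + c_r v_r = 0
\]
in $V$, with scalars $c_i$. I would apply $\pi$ to both sides. Since $\pi(v_r) = 0$ by construction of the quotient $V' = V/\langle v_r\rangle$, the relation collapses to
\[
c_1 \pi(v_1) + \cdots + c_{r-1}\pi(v_{r-1}) = 0
\]
in $V'$. The hypothesis that $\pi(v_1),\dots,\pi(v_{r-1})$ are linearly independent in $V'$ then forces $c_1 = \cdots = c_{r-1} = 0$.

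The remaining step is to handle the trailing term $c_r v_r = 0$. Here I would invoke that $v_r \neq 0$ (which is tacit in the intended use of the lemma: the linear independence of $\pi(v_1),\ldots,\pi(v_{r-1})$ presupposes the quotient is nontrivial, and in the inductive construction of frames in \S\ref{MotFrameSec} the vector $v_r$ is chosen to be nonzero), so that $c_r = 0$ as well. Thus all coefficients vanish and $v_1,\dots,v_r$ are linearly independent.

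There is no real obstacle here; the entire content is the elementary observation that a lift of a linearly independent tuple across the quotient by $\langle v_r\rangle$ stays linearly independent when $v_r$ is appended. The lemma is stated at this level of generality precisely so it can be iterated in the inductive construction of an element of $\F(V_1,\dots,V_r)$: once $v_r$ has been picked in $V_r$, the remaining choices of $v_1,\dots,v_{r-1}$ can be made inside the projections $\pi(V_i) \subset V'$, reducing the frame problem in $V$ by one dimension.
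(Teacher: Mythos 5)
Your proof is correct and follows the same essential idea as the paper's, which also reduces the question to linear independence in the quotient $V/\langle v_r\rangle$, though the paper phrases it as a dimension count ($\dim\pi(\langle v_1,\dots,v_{r-1}\rangle)=r-1$, hence $\dim\pi^{-1}(\pi(\langle v_1,\dots,v_{r-1}\rangle))=r$, and this is contained in $\langle v_1,\dots,v_r\rangle$) rather than by pushing a vanishing linear combination through $\pi$. Your observation that $v_r\ne 0$ is tacitly required is a good catch: the paper's step $\dim\pi^{-1}(\cdots)=r$ likewise needs $\ker\pi=\langle v_r\rangle$ to be one-dimensional, so the paper carries the same hidden hypothesis and you have simply made it explicit.
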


\proof
The dimension of $\pi(\langle v_1,\dots,v_{r-1}\rangle) = \langle \pi(v_1),\dots,
\pi(v_{r-1})\rangle$ is $r-1$ by hypothesis, therefore $\dim \pi^{-1}(\pi
(\langle v_1,\dots,v_{r-1}\rangle))=r$. Since $\pi^{-1}(\pi
(\langle v_1,\dots,v_{r-1}\rangle))\subseteq \langle v_1,\dots,v_r\rangle$,
it follows that $\dim \langle v_1,\dots,v_r\rangle=r$, as needed.
\endproof

A second equally elementary remark is that for a given $v'\ne 0$ in the quotient
$V/\langle v_r\rangle$, and letting as above $\pi$ denote the projection
$V \to V/\langle v_r\rangle$, $\pi^{-1}(v')\cap V_i$ consists of either a single 
vector, if $v_r\not\in V_i$, or a copy of the field $k$, if $v_r\in V_i$. 

This implies the following. 

\begin{lem}\label{Salpha}
Suppose given a stratification $\{S_\alpha\}$ of $V_r$ 
with the properties that
\begin{itemize}
\item $\{S_\alpha\}$ is finer than the stratification induced on $V_r$ by the 
subspace
arrangement $V_1\cap V_r,\dots, V_{r-1}\cap V_r$, hence the number
$s_\alpha$ of spaces $V_i$ ($1\le i<r$) containing a vector $v_r\in S_\alpha$
is independent of the vector and only depends on $\alpha$.
\item For $v_r\in S_\alpha$, the class $\F_\alpha:=[\F(\pi(V_1),
\dots,\pi(V_{r-1}))]$ also depends only on $\alpha$, and not on the chosen 
vector $v_r\in S_\alpha$.
\end{itemize} 
Then the class in the Grothendieck group satisfies
\begin{equation}\label{FSalpha}
[\F (V_1,\dots,V_r)]=\sum_\alpha \L^{s_\alpha} \cdot 
[\F_\alpha]\cdot [S_\alpha]\quad.
\end{equation}
\end{lem}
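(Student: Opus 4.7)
The plan is to compute $[\F(V_1,\dots,V_r)]$ by stratifying via the projection $p\colon \F(V_1,\dots,V_r) \to V_r$ sending $(v_1,\dots,v_r) \mapsto v_r$. Since $V_r$ is the disjoint union of the strata $S_\alpha$, one has $[\F(V_1,\dots,V_r)] = \sum_\alpha [p^{-1}(S_\alpha)]$, so it suffices to establish that $[p^{-1}(S_\alpha)] = \L^{s_\alpha}\cdot [\F_\alpha]\cdot [S_\alpha]$ for each stratum.

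First I would analyze a single fiber $p^{-1}(v_r)$ for $v_r\in S_\alpha$. By Lemma~\ref{fromproj} (together with the converse observation that linear independence of $v_1,\dots,v_r$ forces the images $\pi(v_1),\dots,\pi(v_{r-1})$ in $V/\langle v_r\rangle$ to be linearly independent), the fiber $p^{-1}(v_r)$ admits the natural map $(v_i)\mapsto (\pi(v_i))$ onto $\F(\pi(V_1),\dots,\pi(V_{r-1}))$. Its sub-fibers have the form $\prod_i(\pi^{-1}(\pi(v_i))\cap V_i)$, and the elementary remark preceding the lemma identifies each factor with $\A^1$ when $v_r\in V_i$ and with a single point otherwise. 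Under the first hypothesis of the lemma, the number of $\A^1$-factors is the constant $s_\alpha$, so $p^{-1}(v_r)$ is an $\A^{s_\alpha}$-bundle over $\F(\pi(V_1),\dots,\pi(V_{r-1}))$, trivialized by choosing any complement to $\langle v_r\rangle$ inside each $V_i$ containing $v_r$; its class in the Grothendieck group is $\L^{s_\alpha}\cdot [\F_\alpha]$ by the second hypothesis.

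To promote this pointwise computation to the family $p^{-1}(S_\alpha)\to S_\alpha$, I would refine $S_\alpha$ by a finite stratification into Zariski-locally closed pieces $U$ on which a complement to the moving line $\langle v_r\rangle$ can be chosen to vary algebraically in $v_r$; the preceding construction then globalizes on each $U$ to exhibit $p^{-1}(U)\to U$ as piecewise trivial with fiber of class $\L^{s_\alpha}\cdot [\F_\alpha]$, whence $[p^{-1}(U)] = [U]\cdot \L^{s_\alpha}\cdot [\F_\alpha]$. Summing over this refinement gives $[p^{-1}(S_\alpha)] = [S_\alpha]\cdot \L^{s_\alpha}\cdot [\F_\alpha]$, and summing once more over $\alpha$ yields \eqref{FSalpha}.

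The main obstacle is precisely this last step: the hypotheses guarantee constancy of the fiber class over each $S_\alpha$, but constant fiber class alone does not in general imply multiplicativity in the Grothendieck group. The real content of the lemma is that the two listed conditions make the stratification fine enough for the family $p^{-1}(S_\alpha)\to S_\alpha$ to be piecewise trivial; this must be verified by exploiting the explicit affine-bundle structure of each fiber together with the controlled variation of the subspaces $\pi(V_i)\subset V/\langle v_r\rangle$ as $v_r$ ranges over $S_\alpha$.
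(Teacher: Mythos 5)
Your proof takes essentially the same route as the paper's brief argument — both invoke Lemma~\ref{fromproj} together with the observation preceding the lemma to exhibit the fiber over a fixed $v_r$ as an $\A^{s_\alpha}$-bundle over the frame manifold $\F(\pi(V_1),\dots,\pi(V_{r-1}))$. But you correctly flag a step the paper leaves implicit: constancy of the fiber class on each stratum does not by itself give multiplicativity in the Grothendieck ring (for instance, a degree-$2$ \'etale cover $\bG_m\to\bG_m$ has every fiber of class $2$, yet $[\bG_m]\neq 2[\bG_m]$). What saves the argument is that the families in question are piecewise trivial, and your proposed fix — refining $S_\alpha$ until the moving line $\langle v_r\rangle$, a complement to it inside each $V_i$ containing $v_r$, and the subspaces $\pi(V_i)\subset V/\langle v_r\rangle$ can all be trivialized — is the right way to justify this. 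It is indeed available here: over a stratum where the incidence pattern is constant, $V/\langle v_r\rangle$ is a vector bundle over $S_\alpha$ and each $\pi(V_i)$ is a subbundle of constant rank, hence all are Zariski-locally trivial, so a further finite refinement does trivialize the frame construction. Making this point explicit, as you do, is a genuine sharpening of the paper's terse two-sentence proof.
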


\proof
Indeed, by Lemma~\ref{fromproj} every frame in the quotient will determine
frames in $V$, and by the observation following the Lemma, there is a
whole $k^{s_\alpha}$ of frames over a given one in the quotient.
\endproof

In an inductive argument, the loci $[\F_\alpha]$ could be assumed to
be mixed Tate, and \eqref{FSalpha} would provide a strong indication that
$[\F(V_1,\dots,V_r)]$ is then mixed Tate as well. We focus here on giving
statements at the level of classes in the Grothendieck ring, for simplicity,
though these same arguments, based on constructing explicit stratifications,
can be also used to derive conclusion on the motives at the level
of the derived category of mixed motives in a way similar to what we
did in the case of the complement of the determinant hypersurface 
in \S\ref{MotSec} above.

The main question is then reduced to finding conditions under which 
a stratification of the type described here exists. We see explicitly
how the argument goes in the simplest cases of two and three subspaces.
As we discuss below, the case of three subspaces is already more involved
and exhibits some of the features one is bound to encounter, with a more
complicated combinatorics, in the more general cases.

\subsection{The case of two subspaces}\label{case2subs}

Let $V_1$, $V_2$ be subspaces
of a vector space $V$. We want to parametrize all pairs of vectors
\[
(v_1,v_2)
\]
such that $v_1\in V_1$, $v_2\in V_2$, and $\dim\langle v_1,v_2\rangle=2$.
This locus can be decomposed into two pieces (which may be empty), 
defined by the following prescriptions:
\begin{enumerate}
\item Choose $v_1\in V_1\smallsetminus (V_1\cap V_2)$, and $v_2\in 
V_2\smallsetminus \{0\}$;
\item Choose $v_1\in (V_1\cap V_2)\smallsetminus \{0\}$, and $v_2\in V_2
\smallsetminus \langle v_1\rangle$.
\end{enumerate}
It is clear that each of these two recipes produces linearly independent
vectors, and that (1) and (2) exhaust the ways in which this can be done.
So $\F(V_1,V_2)$ is the union of the corresponding loci.
Pairs $(v_1,v_2)$ as in (1) range over the locus 
$(V_1\smallsetminus (V_1\cap V_2))\times (V_2\smallsetminus \{0\})$, which is
clearly mixed Tate. As for (2), realize it as follows: 
\begin{itemize}
\item Consider the projective space $\P(V_1\cap V_2)$, and the 
trivial bundles $\cV_{12}\subseteq \cV_2$ with fiber 
$V_1\cap V_2 \subseteq V_2$.
\item $\cV_{12}$ contains the tautological line bundle $\cO_{12}(-1)$
over $\P(V_1\cap V_2)$, hence this line bundle is naturally 
contained in $\cV_2$ as well.
\item Then the pairs $(v_1,v_2)$ as in (2) are obtained by choosing
a point $p\in \P(V_1\cap V_2)$, a vector $v_1\ne 0$ in the fiber of
$\cO_{12}(-1)$ over $p$, and a vector $v_2$ in the fiber of 
$\cV_2\smallsetminus \cO_{12}(-1)$ over $p$.
\end{itemize}
It is clear that this description also produces a mixed Tate motive.

Note that the prescriptions given as (1) and (2) suffice to compute the
class in the Grothendieck group.

\begin{lem}\label{r=2}
The class in the Grothendieck group of the manifold of frames $\F(V_1,V_2)$ is of the form
\begin{equation}\label{r2frames}
[\F(V_1,V_2)]=\L^{d_1+d_2}-\L^{d_1}-\L^{d_2}-\L^{d_{12}+1}
+\L^{d_{12}}+\L\quad,
\end{equation}
where $d_i = \dim V_i$ and $d_{12}=\dim (V_1\cap V_2)$.
\end{lem}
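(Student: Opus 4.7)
The plan is to compute the class directly from the decomposition of $\F(V_1,V_2)$ into the two pieces (1) and (2) listed just before the statement of the lemma, verify that these pieces are disjoint, show that each is a Zariski-locally trivial fibration over a base whose class is already known, and then add the resulting polynomials in $\L$.

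More concretely, the first piece parametrizes pairs where $v_1 \in V_1 \smallsetminus (V_1 \cap V_2)$ is chosen freely and $v_2 \in V_2 \smallsetminus \{0\}$ is chosen freely; the two choices are independent, so this piece is simply the product $(V_1\smallsetminus(V_1\cap V_2))\times(V_2\smallsetminus\{0\})$, whose class in the Grothendieck ring is $(\L^{d_1}-\L^{d_{12}})(\L^{d_2}-1)$. For the second piece, the base is $(V_1\cap V_2)\smallsetminus\{0\}$, of class $\L^{d_{12}}-1$, and over each point $v_1$ the fiber is $V_2\smallsetminus\langle v_1\rangle$, of class $\L^{d_2}-\L$; since the line $\langle v_1\rangle$ depends linearly on $v_1$, this fibration is Zariski-locally trivial (trivialized by the tautological line bundle on $\P(V_1\cap V_2)$, as spelled out in the paragraph preceding the statement), so the class of the total space is $(\L^{d_{12}}-1)(\L^{d_2}-\L)$.

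It remains to check disjointness and exhaustiveness, which is immediate: in (1) we have $v_1\notin V_1\cap V_2$ while in (2) we have $v_1\in V_1\cap V_2$, and any pair $(v_1,v_2)\in\F(V_1,V_2)$ falls in exactly one of these cases (note that in case (2) the independence condition $v_2\notin\langle v_1\rangle$ is exactly what is needed, since $v_1\ne 0$; in case (1), independence is automatic from $v_2\ne 0$ and $v_1\notin V_2\supset\langle v_2\rangle$). Adding the two contributions,
\[
[\F(V_1,V_2)]=(\L^{d_1}-\L^{d_{12}})(\L^{d_2}-1)+(\L^{d_{12}}-1)(\L^{d_2}-\L),
\]
and expanding, the cross terms $\pm\L^{d_{12}+d_2}$ cancel and what remains is
\[
\L^{d_1+d_2}-\L^{d_1}-\L^{d_2}-\L^{d_{12}+1}+\L^{d_{12}}+\L,
\]
which is the claimed formula.

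There is no real obstacle here beyond bookkeeping; the only point one has to be slightly careful about is the triviality of the fibration in piece (2), which is why the authors gave the explicit description in terms of the tautological line bundle $\cO_{12}(-1)\subset\cV_2$ over $\P(V_1\cap V_2)$. One could alternatively phrase the computation via Lemma~\ref{Salpha} with the stratification $S_0=\{0\}$, $S_1=(V_1\cap V_2)\smallsetminus\{0\}$, $S_2=V_2\smallsetminus(V_1\cap V_2)$ of $V_2$ (playing the role of $V_r$ with $r=2$), obtaining the same answer; this presentation has the merit of fitting the inductive pattern that the authors then pursue for three subspaces.
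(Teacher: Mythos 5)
Your proof is correct and follows essentially the same route as the paper: it uses the same decomposition into the two loci (1) and (2) described just before the lemma, computes the classes $(\L^{d_1}-\L^{d_{12}})(\L^{d_2}-1)$ and $(\L^{d_{12}}-1)(\L^{d_2}-\L)$, and adds. The extra remarks you include—disjointness/exhaustiveness, local triviality via $\cO_{12}(-1)$, and the alternative phrasing via Lemma~\ref{Salpha}—merely make explicit points the paper handles in the surrounding discussion.
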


\proof The two loci (1) and (2) respectively have classes 
\begin{enumerate}
\item $(\L^{d_1}-\L^{d_{12}})(\L^{d_2}-1)$;
\item $(\L^{d_{12}}-1)(\L^{d_2}-\L)$.
\end{enumerate}
The class of $\F(V_1,V_2)$ is then given by the sum
\begin{align*}
[\F(V_1,V_2)]&=(\L^{d_1+d_2}-\L^{d_1}-\L^{d_2+d_{12}}+\L^{d_{12}})
+(\L^{d_2+d_{12}}-\L^{d_{12}+1}-\L^{d_2}+\L) \\
&=\L^{d_1+d_2}-\L^{d_1}-\L^{d_2}-\L^{d_{12}+1}
+\L^{d_{12}}+\L .
\end{align*}
This gives \eqref{r2frames}.
\endproof

Notice that the expression for $[\F(V_1,V_2)]$ is symmetric in $V_1$ and $V_2$, though
the two individual contributions (1) and (2) are not. Of course a more symmetric description
of the locus can be obtained by subdividing it into four cases according to whether
$v_1$, $v_2$ are or are not in $V_1\cap V_2$.

\subsection{The case of three subspaces}

We are given three subspaces $V_1$, $V_2$, $V_3$
of a vector space, and we want to parametrize all triples of linearly
independent vectors $(v_1,v_2,v_3)$ with $v_i\in V_i$. As above,
$d_i$ will stand for the dimension of $V_i$, and $d_{ij}$ for 
$\dim (V_i\cap V_j)$. Further, 
let $d_{123}=\dim (V_1\cap V_2
\cap V_3)$, and $D=\dim (V_1+V_2+V_3)$.

Notice that now the information on the dimension $D$ is also needed 
and does not follow from the other data. This can be seen easily by
thinking of the cases of three distinct lines
spanning a 3-dimensional vector space or of three 
distinct coplanar lines. These configurations only differ in the
number $D$, yet the set of linearly independent triples is nonempty
in the first case, empty in the second.  

We proceed as follows. Given a choice of $v_3\in V_3$, consider the projection
$\pi: V \to V':=V/\langle v_3\rangle$; in $V'$ we have the images
$\pi(V_1), \pi(V_2)$, to which we can apply the case $r=2$ analyzed
above. As we have seen, 
$\F (V_1',V_2')$ is determined by the dimensions of $V'_1$, $V'_2$,
and $V'_1\cap V'_2$. Thus, we need a stratification of $V_3$ such
that, for $v_3\in V_3$ and denoting as above by $\pi$ the projection
$V\to V/\langle v_3\rangle$, the dimensions of the spaces
\[
\pi(V_1)\quad,\quad \pi(V_2)\quad,\quad \pi(V_1)\cap \pi(V_2)
\]
are constant along strata.

\begin{lem}\label{r3strata}
The following 5 loci give a stratification of $V_3\smallsetminus \{0\}$ with the
properties of Lemma~\ref{Salpha}.
\begin{enumerate}
\item $S_{123}:=(V_1\cap V_2\cap V_3)\smallsetminus \{0\}$;
\item $S_{13}:=(V_1\cap V_3)\smallsetminus (V_1\cap V_2\cap V_3)$;
\item $S_{23}:=(V_2\cap V_3)\smallsetminus (V_1\cap V_2\cap V_3)$;
\item $S_{(12)3}:=((V_1+V_2)\cap V_3)\smallsetminus ((V_1\cup V_2)\cap V_3)$;
\item $S_3:=V_3\smallsetminus ((V_1+V_2)\cap V_3)$.
\end{enumerate}
\end{lem}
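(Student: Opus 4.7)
The plan is to verify directly the two conditions of Lemma~\ref{Salpha} for the listed stratification. First I would check that the five loci do give a partition of $V_3\smallsetminus\{0\}$: given $v_3\in V_3\smallsetminus\{0\}$, the conditions ``$v_3\in V_1$'', ``$v_3\in V_2$'', ``$v_3\in V_1+V_2$'' are nested in a way that places $v_3$ into exactly one of $S_{123}$, $S_{13}$, $S_{23}$, $S_{(12)3}$, $S_3$ (this is immediate from the set-theoretic identities defining the strata). The same observation shows that this stratification refines the one induced on $V_3$ by the arrangement $V_1\cap V_3, V_2\cap V_3$: the cases ``$v_3\in V_1\cap V_2$'', ``$v_3\in V_1\setminus V_2$'', ``$v_3\in V_2\setminus V_1$'', ``$v_3\notin V_1\cup V_2$'' correspond respectively to $S_{123}$, $S_{13}$, $S_{23}$, and $S_{(12)3}\cup S_3$. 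In particular, on each stratum the number $s_\alpha$ of spaces among $V_1,V_2$ containing $v_3$ is well-defined: $s=2$ on $S_{123}$, $s=1$ on $S_{13}$ and $S_{23}$, $s=0$ on $S_{(12)3}$ and $S_3$.

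The core of the argument is the second condition. By Lemma~\ref{r=2}, the class $[\F(\pi(V_1),\pi(V_2))]$ is determined by the three integers
\[
\dim\pi(V_1),\quad \dim\pi(V_2),\quad \dim\bigl(\pi(V_1)\cap\pi(V_2)\bigr),
\]
so I need only check that these three numbers are constant along each stratum. The first two are transparent: writing $W=\langle v_3\rangle$, $\dim\pi(V_i)=d_i-\dim(V_i\cap W)$ equals $d_i$ if $v_3\notin V_i$ and $d_i-1$ if $v_3\in V_i$. The only nontrivial computation concerns $\dim(\pi(V_1)\cap\pi(V_2))$.

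For this I would use the standard identity $\pi(V_1)\cap\pi(V_2)=\pi\bigl(V_1\cap(V_2+W)\bigr)$, which yields
\[
\dim\bigl(\pi(V_1)\cap\pi(V_2)\bigr)=\dim\bigl(V_1\cap(V_2+W)\bigr)-\dim\bigl(V_1\cap(V_2+W)\cap W\bigr),
\]
and then expand $\dim(V_1\cap(V_2+W))$ via the formula $\dim V_1+\dim(V_2+W)-\dim(V_1+V_2+W)$. A short case analysis along the five strata (splitting on whether $v_3\in V_2$ and whether $v_3\in V_1+V_2$) gives the values $d_{12}-1,\,d_{12},\,d_{12},\,d_{12}+1,\,d_{12}$ on $S_{123}, S_{13}, S_{23}, S_{(12)3}, S_3$ respectively. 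Combined with the values of $\dim\pi(V_i)$, this shows that $[\F_\alpha]$ depends only on $\alpha$, as required.

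The only step that requires any care is the dimension count for $\dim(\pi(V_1)\cap\pi(V_2))$ in the stratum $S_{(12)3}$, where one must use the assumption $v_3\in V_1+V_2$ (making $V_1+V_2+W=V_1+V_2$) together with $v_3\notin V_2$ (making $\dim(V_2+W)=d_2+1$) and $v_3\notin V_1$ (making $V_1\cap W=0$); this is exactly the combination of conditions that singles out $S_{(12)3}$ from the other strata, and is the reason this locus must be isolated as its own stratum rather than being merged with $S_3$.
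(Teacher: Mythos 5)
Your proof is correct and follows essentially the same route as the paper: verify that the five loci partition $V_3\smallsetminus\{0\}$ and refine the arrangement $V_1\cap V_3,\, V_2\cap V_3$, and then show that $\dim\pi(V_1)$, $\dim\pi(V_2)$, $\dim(\pi(V_1)\cap\pi(V_2))$ are constant on each stratum so that Lemma~\ref{r=2} yields a well-defined $[\F_\alpha]$. The only (minor) difference is in the last step: you compute $\dim(\pi(V_1)\cap\pi(V_2))$ via the identity $\pi(V_1)\cap\pi(V_2)=\pi(V_1\cap(V_2+W))$, whereas the paper uses $\dim(\pi(V_1)\cap\pi(V_2))=\dim\pi(V_1)+\dim\pi(V_2)-\dim\pi(V_1+V_2)$ together with $\dim\pi(V_1+V_2)=\dim(V_1+V_2)-\dim((V_1+V_2)\cap W)$; both give the same table of values.
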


\proof First observe that
$$ \dim \pi(V_i)=\left\{
\aligned
d_i \qquad &\text{if $v_3\not\in V_i$} \\
d_i-1 \quad &\text{if $v_3\in V_i$} \\
\endaligned
\right. $$
As for $\dim(\pi(V_1)\cap \pi(V_2))$, note that
\[
\dim(\pi(V_1)\cap \pi(V_2))
=\dim(\pi(V_1))+\dim(\pi(V_2))-\dim(\pi(V_1) + \pi(V_2))
\]
and
\[
\dim(\pi(V_1) + \pi(V_2))
=\dim(\pi(V_1+V_2))=
\left\{
\aligned
\dim(V_1+V_2) \qquad &\text{if $v_3\not\in V_1+V_2$} \\
\dim(V_1+V_2)-1 \quad &\text{if $v_3\in V_1+V_2$} \\
\endaligned
\right.\]
It follows easily that the three numbers $\dim \pi(V_1)$, $\dim \pi(V_2)$, 
$\dim(\pi(V_1\cap V_2))$ are constant along the strata. More explicitly
one has the following data.

\begin{center}
\begin{tabular}{|c||c|c|c|}
\hline
 & $\dim \pi(V_1)$ & $\dim \pi(V_2)$ & $\dim (\pi(V_1)\cap \pi(V_2))$ \\
\hline
\hline
$S_{123}$ & $d_1-1$ & $d_2-1$ & $d_{12}-1$\\
\hline
$S_{13}$ & $d_1-1$ & $d_2$ & $d_{12}$\\
\hline
$S_{23}$ & $d_1$ & $d_2-1$ & $d_{12}$\\
\hline
$S_{(12)3}$ & $d_1$ & $d_2$ & $d_{12}+1$\\
\hline
$S_{3}$ & $d_1$ & $d_2$ & $d_{12}$\\
\hline
\end{tabular}
\end{center}

For example, in the fourth (and most interesting) case, $\dim \pi(V_1)=d_1$
and $\dim \pi(V_2)=d_2$ since $v_3\not\in V_i$ if $v_3\in S_{(12)3}$;
$\dim \pi(V_1+V_2)=\dim (V_1+V_2)-1$ since $v_3\in V_1+V_2$; and
hence
\begin{align*}
\dim \pi(V_1)\cap \pi(V_2) &=\dim V_1 + \dim V_2 - \dim(V_1+V_2) + 1\\
&= \dim (V_1\cap V_2) +1 = d_{12}+1\quad.
\end{align*}

Lemma~\ref{r=2} converts this information into the list of the classes
$[\F_\alpha]$ and one obtains the following list of cases.

\begin{center}
\begin{tabular}{|c||c|}
\hline
 & $[\F_\alpha]$ \\
\hline
\hline
$S_{123}$ & $\L^{d_1+d_2-2}-\L^{d_1-1}-\L^{d_2-1}-\L^{d_{12}}
+\L^{d_{12}-1}+\L$\\
\hline
$S_{13}$ & $\L^{d_1+d_2-1}-\L^{d_1-1}-\L^{d_2}-\L^{d_{12}+1}
+\L^{d_{12}}+\L$\\
\hline
$S_{23}$ & $\L^{d_1+d_2-1}-\L^{d_1}-\L^{d_2-1}-\L^{d_{12}+1}
+\L^{d_{12}}+\L$\\
\hline
$S_{(12)3}$ & $\L^{d_1+d_2}-\L^{d_1}-\L^{d_2}-\L^{d_{12}+2}
+\L^{d_{12}+1}+\L$\\
\hline
$S_{3}$ & $\L^{d_1+d_2}-\L^{d_1}-\L^{d_2}-\L^{d_{12}+1}
+\L^{d_{12}}+\L$ \\
\hline
\end{tabular}
\end{center}

The number $s_\alpha$ is immediately read off the geometry. The last
ingredient consists of the class $[S_\alpha]$, which is also essentially
immediate. The only item that deserves attention is the dimension of
$(V_1+V_2)\cap V_3$. This is
\[
\dim (V_1+V_2)+\dim V_3-\dim (V_1+V_2+V_3) =
\dim (V_1+V_2) + d_3 - D\quad;
\]
and as
\[
\dim (V_1+V_2)=\dim V_1+\dim V_2-\dim(V_1\cap V_2) = d_1+d_2-d_{12}
\quad,
\]
we have
\[
\dim((V_1+V_2)\cap V_3)=d_1+d_2+d_3-D-d_{12}\quad.
\]
With this understood one obtains the following list of cases.

\begin{center}
\begin{tabular}{|c||c|c|}
\hline
 & $[S_\alpha]$ & $s_\alpha$ \\
\hline
\hline
$S_{123}$ & $\L^{d_{123}}-1$ & $2$\\
\hline
$S_{13}$ & $\L^{d_{13}}-\L^{d_{123}}$ & $1$\\
\hline
$S_{23}$ & $\L^{d_{23}}-\L^{d_{123}}$ & $1$\\
\hline
$S_{(12)3}$ & $\L^{d_1+d_2+d_3-D-d_{12}}-\L^{d_{13}} -\L^{d_{23}} 
+\L^{d_{123}} $ & $0$\\
\hline
$S_{3}$ & $\L^{d_3}-\L^{d_1+d_2+d_3-D-d_{12}}$ & $0$ \\
\hline
\end{tabular}
\end{center}

This completes the proof. \endproof

We can now apply equation \eqref{FSalpha}, and this gives the following result.

\begin{lem}\label{r=3}
The class of $\F(V_1,V_2,V_3)$ in the Grothendieck group is of the form 
\begin{equation}\label{GrF3}
\begin{array}{c}
[\F(V_1,V_2,V_3)]
=(\L^{d_1}-1)(\L^{d_2}-1)(\L^{d_3}-1) \\[2mm]
-(\L-1)\left((\L^{d_1}-\L)(\L^{d_{23}}-1)+(\L^{d_2}-\L)(\L^{d_{13}}-1)
+(\L^{d_3}-\L)(\L^{d_{12}}-1)\right) \\[2mm]
+(\L-1)^2\left(\L^{d_1+d_2+d_3-D}-\L^{d_{123}+1}\right)
+(\L-1)^3\quad.
\end{array}
\end{equation}
\end{lem}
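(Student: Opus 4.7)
The plan is to apply equation \eqref{FSalpha} of Lemma~\ref{Salpha} directly to the stratification of $V_3 \smallsetminus\{0\}$ constructed in Lemma~\ref{r3strata}. That stratification was tailored exactly so that, along each stratum, the triple of dimensions $(\dim \pi(V_1),\dim \pi(V_2),\dim(\pi(V_1)\cap\pi(V_2)))$ is constant, and hence the class $[\F_\alpha]$ of the fibre over any point of $S_\alpha$ is prescribed by the closed formula \eqref{r2frames} from Lemma~\ref{r=2}. All three ingredients entering \eqref{FSalpha}, namely $[\F_\alpha]$, $s_\alpha$ and $[S_\alpha]$, are already tabulated in the proof of Lemma~\ref{r3strata} for each of the five strata $S_{123}$, $S_{13}$, $S_{23}$, $S_{(12)3}$, $S_3$.

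First, I would write out the five products $\L^{s_\alpha}[\F_\alpha][S_\alpha]$ one by one, using the tables. Note that the two strata $S_{123}$, $S_{13}$, $S_{23}$ carry positive powers of $\L$ coming from $s_\alpha$, which will exactly cancel the shifts $-1$, $-2$ appearing in the corresponding $[\F_\alpha]$, so that after multiplication each term can be written purely in terms of the dimensions $d_i$, $d_{ij}$, $d_{123}$, $D$, with no half-integer-like artefacts. I would then collect the five contributions by the monomial they contain in $\L$. It is convenient to group systematically by the sets of indices involved: the part not involving any $d_{ij}$ or $D$ will assemble into $(\L^{d_1}-1)(\L^{d_2}-1)(\L^{d_3}-1)$, the parts involving a single pair index $d_{ij}$ will assemble into $(\L-1)(\L^{d_k}-\L)(\L^{d_{ij}}-1)$ for $\{i,j,k\}=\{1,2,3\}$, and the remaining terms (those involving $d_{123}$ or $D$) will concentrate in the strata $S_{123}$ and $S_{(12)3}$.

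A useful sanity check at every stage is that the final answer must be symmetric under permutations of $(V_1,V_2,V_3)$, whereas the stratification of Lemma~\ref{r3strata} singles out $V_3$. Thus the terms that superficially break the symmetry (those involving $d_{13}$ or $d_{23}$ alone, or the combination $d_1+d_2+d_3-D-d_{12}$) must either cancel or reorganize into the manifestly symmetric expression $\L^{d_1+d_2+d_3-D}-\L^{d_{123}+1}$ appearing in the claimed formula \eqref{GrF3}; this provides a strong bookkeeping check. In particular, the summand $\L^{d_1+d_2+d_3-D-d_{12}}$ appears in both $[S_{(12)3}]$ and $[S_3]$ with opposite signs, and the combined contribution must be manipulated using $(\L-1)$-factorizations to produce the advertised $(\L-1)^2\L^{d_1+d_2+d_3-D}$ term.

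The main obstacle is not conceptual but book-keeping: collecting the roughly thirty resulting monomials and verifying that they reassemble into the compact symmetric form \eqref{GrF3} is a tedious but mechanical polynomial identity in $\L$, $\L^{d_i}$, $\L^{d_{ij}}$, $\L^{d_{123}}$, $\L^D$. Organizing the computation by the total degree in the dimension parameters, and separating out $(\L-1)$-factors as early as possible, keeps the intermediate expressions manageable and makes the final identification with \eqref{GrF3} transparent.
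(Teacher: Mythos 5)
Your proposal matches the paper's argument exactly: the paper proves Lemma~\ref{r=3} precisely by plugging the tables of $[\F_\alpha]$, $s_\alpha$, $[S_\alpha]$ from Lemma~\ref{r3strata} into equation \eqref{FSalpha} and simplifying. Your added remarks about the symmetry check and the cancellation of the $\L^{d_1+d_2+d_3-D-d_{12}}$ terms between $[S_{(12)3}]$ and $[S_3]$ are accurate observations about how the bookkeeping goes, but they do not constitute a different route.
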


Notice once again that the expression \eqref{GrF3} is symmetric in
$V_1$, $V_2$, $V_3$, unlike the contributions of the individual strata.
Slightly more refined considerations, in the style of those sketched
in \S\ref{case2subs}, prove that $[\F(V_1,V_2,V_3)]$ is in fact mixed Tate.

\medskip

In principle, the procedure applied here should work for a larger number
of subspaces: the main task amounts to the determination of a stratification
of the last subspace satisfying the properties given in Lemma~\ref{Salpha}.
This is bound to be rather challenging for $r\ge 4$: already for $r=4$ one 
can produce examples for which the closures of the strata are not linear 
subspaces. This is in fact the case already for $V_1,\dots,V_3$ planes
in general position in a $4$-dimensional ambient space $E$: the 
unique quadric cone containing $V_1$, $V_2$, $V_3$ is the closure
of a stratum in a stratification of $V_4=E$ satisfying the properties
listed in Lemma~\ref{Salpha}.

\subsection{Graphs with three loops}

One can apply the formula of Lemma~\ref{r=3} to compute explicitly
the motive (as a class in the Grothendieck group) for the locus
\begin{equation}\label{hatSigmaMot3}
\hat\Sigma_{3,0} \smallsetminus (\hat\Sigma_{3,0}\cap \hat\cD_3)
\end{equation}
of intersection of the divisor with normal crossings $\hat\Sigma_{\ell,g}$ 
of \eqref{hatSigmaXi} with the complement of the determinant hypersurface,
in the case of (planar) graphs with three loops. 

As pointed out in the discussion following Corollary~\ref{Xicap}, 
studying $\hat\Sigma_{3,0}$ suffices in order to get analogous 
information for $\hat\Sigma_\Gamma$ for every graph with three 
loops and satisfying the condition specified at the beginning of
\S\ref{PeriodSec} (guaranteeing that the corresponding
map $\tau$ is injective). The divisor $\hat\Sigma_{3,0}$ is the divisor 
corresponding to the ``wheel with three spokes''
graph (the skeleton of the tetrahedron).

This graph has matrix $M_\Gamma(t)$ given by
 \[
\begin{pmatrix}
t_1+t_2+t_5 & -t_1 & -t_2 \\
-t_1 & t_1+t_3+t_4 & -t_3 \\
-t_2 & -t_3 & t_2+t_3+t_6
\end{pmatrix}
\]
Here, $t_1,\dots,t_6$ are variables associated with the six edges of the
graph, labeled as in Figure~\ref{wheelFig}.

\begin{center}
\begin{figure}
\includegraphics[scale=.4]{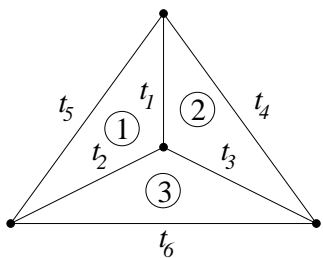}
\caption{The wheel with three spokes graph \label{wheelFig}}
\end{figure}
\end{center}

Choose the internal faces with counterclockwise orientation as
the basis of loops. Then any orientation for the edges leads 
to the matrix displayed above.
Labeling entries of the matrix as $x_{ij}$, we can obtain $t_1,\dots,t_6$
as pull-backs of the following:
\[
\left\{
\aligned 
t_1 &=-x_{12} \\
t_2 &=-x_{13} \\
t_3 &=-x_{23} \\
t_4 &=x_{21}+x_{22}+x_{23} \\
t_5 &=x_{11}+x_{12}+x_{13} \\
t_6 &=x_{31}+x_{32}+x_{33} \\
\endaligned
\right.
\]

Thus, we are considering the divisor $\hat\Sigma_{3,0}$ with
normal crossings given by the equation
\[
x_{12} x_{13} x_{23} 
(x_{11}+x_{12}+x_{13} )(x_{21}+x_{22}+x_{23})(x_{31}+x_{32}+x_{33})
=0\quad.
\]

We want to obtain an explicit description, as a class in the
Grothendieck group, of the intersection of
this locus with the complement of determinant
hypersurface $\hat\cD_3$ in $\A^9$. 
By inclusion--exclusion  (\cf~\S\ref{IE})
 this can be done by carrying out the computation
for all intersections of subsets of the components of this divisor.
Since there are $6$ components, there are $2^6=64$ such intersections.

Each of these possibilities determines a triple of subspaces $V_1,V_2,V_3$
inside the ambient~$\A^9$ (\cf~Lemma~\ref{linFi}),
corresponding to linearly independent 
vectors $v_1,v_2,v_3$, \ie the rows of the matrix $x_{ij}$, 
parameterizing points in the complement of the determinant.

Thus, to begin with, one computes for each of these cases 
the corresponding class $[\F(V_1,V_2,V_3)]$ using Lemma~\ref{r=3}.

Note that each of these classes is necessarily a multiple of $(\L-1)^3$:
indeed, once the directions of $v_1,v_2,v_3$ are specified, the set
of vectors with those directions forms a $(\C^*)^3$. We list the
classes here, divided by this constant factor $(\L-1)^3$. Each class
is marked according to the components of $\hat\Sigma_{3,0}$ containing
the corresponding locus: for example, $\bullet\bullet\circ$~$\circ\circ\bullet$
corresponds to the complement of $\hat\cD_3$ in the intersection of 
$X_1\cap X_2\cap X_6$, where $X_i$ pulls back to $t_i$ via $\tau$ as 
above (thus, $X_1\cap X_2\cap X_6$ has equations $x_{12}=x_{13}
=x_{31}+x_{32}+x_{33}=0$).

{\tiny
\begin{center}
\begin{tabular}{|c|c||c|c||c|c||c|c||}
\hline
$\bullet\bullet\bullet$~$\bullet\bullet\bullet$ & $0$ &
$\bullet\bullet\circ$~$\circ\bullet\bullet$ & $0$ &
$\bullet\circ\circ$~$\circ\bullet\bullet$ & $\bL^3$ &
$\circ\bullet\circ$~$\circ\circ\bullet$ & $\bL^3(\bL+2)$ \\
\hline
$\circ\bullet\bullet$~$\bullet\bullet\bullet$ & $0$ &
$\bullet\bullet\circ$~$\bullet\circ\bullet$ & $\bL(\bL+1)$ &
$\bullet\circ\circ$~$\bullet\circ\bullet$ & $\bL^2(\bL+1)$ &
$\circ\bullet\circ$~$\circ\bullet\circ$ & $\bL^3(\bL+1)$ \\
\hline
$\bullet\circ\bullet$~$\bullet\bullet\bullet$ & $0$ &
$\bullet\bullet\circ$~$\bullet\bullet\circ$ & $0$ &
$\bullet\circ\circ$~$\bullet\bullet\circ$ & $\bL^3$ &
$\circ\bullet\circ$~$\bullet\circ\circ$ & $\bL^3(\bL+2)$ \\
\hline
$\bullet\bullet\circ$~$\bullet\bullet\bullet$ & $0$ &
$\bullet\bullet\bullet$~$\circ\circ\bullet$ & $\bL^2$ &
$\bullet\circ\bullet$~$\circ\circ\bullet$ & $\bL(\bL^2+2\bL-1)$ &
$\circ\bullet\bullet$~$\circ\circ\circ$ & $\bL^3(\bL+1)$ \\
\hline
$\bullet\bullet\bullet$~$\circ\bullet\bullet$ & $0$ &
$\bullet\bullet\bullet$~$\circ\bullet\circ$ & $0$ &
$\bullet\circ\bullet$~$\circ\bullet\circ$ & $\bL^2(\bL+1)$ &
$\bullet\circ\circ$~$\circ\circ\bullet$ & $\bL^3(\bL+2)$ \\
\hline
$\bullet\bullet\bullet$~$\bullet\circ\bullet$ & $\bL$ &
$\bullet\bullet\bullet$~$\bullet\circ\circ$ & $\bL^2$ &
$\bullet\circ\bullet$~$\bullet\circ\circ$ & $\bL^2(\bL+1)$ &
$\bullet\circ\circ$~$\circ\bullet\circ$ & $\bL^3(\bL+1)$ \\
\hline
$\bullet\bullet\bullet$~$\bullet\bullet\circ$ & $0$ &
$\circ\circ\circ$~$\bullet\bullet\bullet$ & $0$ &
$\bullet\bullet\circ$~$\circ\circ\bullet$ & $\bL^2(\bL+1)$ &
$\bullet\circ\circ$~$\bullet\circ\circ$ & $\bL^3(\bL+2)$ \\
\hline
$\circ\circ\bullet$~$\bullet\bullet\bullet$ & $0$ &
$\circ\circ\bullet$~$\circ\bullet\bullet$ & $\bL^2(\bL+1)$ &
$\bullet\bullet\circ$~$\circ\bullet\circ$ & $0$ &
$\bullet\circ\bullet$~$\circ\circ\circ$ & $\bL^3(\bL+2)$ \\
\hline
$\circ\bullet\circ$~$\bullet\bullet\bullet$ & $0$ &
$\circ\circ\bullet$~$\bullet\circ\bullet$ & $\bL^3$ &
$\bullet\bullet\circ$~$\bullet\circ\circ$ & $\bL^2(\bL+1)$ &
$\bullet\bullet\circ$~$\circ\circ\circ$ & $\bL^3(\bL+1)$ \\
\hline
$\circ\bullet\bullet$~$\circ\bullet\bullet$ & $\bL^2$ &
$\circ\circ\bullet$~$\bullet\bullet\circ$ & $\bL^3$ &
$\bullet\bullet\bullet$~$\circ\circ\circ$ & $\bL^3$ &
$\circ\circ\circ$~$\circ\circ\bullet$ & $\bL^3(\bL+1)^2$ \\
\hline
$\circ\bullet\bullet$~$\bullet\circ\bullet$ & $\bL^2$ &
$\circ\bullet\circ$~$\circ\bullet\bullet$ & $\bL^3$ &
$\circ\circ\circ$~$\circ\bullet\bullet$ & $\bL^3(\bL+1)$ &
$\circ\circ\circ$~$\circ\bullet\circ$ & $\bL^3(\bL+1)^2$ \\
\hline
$\circ\bullet\bullet$~$\bullet\bullet\circ$ & $0$ &
$\circ\bullet\circ$~$\bullet\circ\bullet$ & $\bL^2(\bL+1)$ &
$\circ\circ\circ$~$\bullet\circ\bullet$ & $\bL^3(\bL+1)$ &
$\circ\circ\circ$~$\bullet\circ\circ$ & $\bL^3(\bL+1)^2$ \\
\hline
$\bullet\circ\circ$~$\bullet\bullet\bullet$ & $0$ &
$\circ\bullet\circ$~$\bullet\bullet\circ$ & $\bL^3$ &
$\circ\circ\circ$~$\bullet\bullet\circ$ &  $\bL^3(\bL+1)$ &
$\circ\circ\bullet$~$\circ\circ\circ$ & $\bL^3(\bL+1)^2$ \\
\hline
$\bullet\circ\bullet$~$\circ\bullet\bullet$ & $\bL^2$ &
$\circ\bullet\bullet$~$\circ\circ\bullet$ & $\bL^2(\bL+1)$ &
$\circ\circ\bullet$~$\circ\circ\bullet$ & $\bL^3(\bL+2)$ &
$\circ\bullet\circ$~$\circ\circ\circ$ & $\bL^3(\bL+1)^2$ \\
\hline
$\bullet\circ\bullet$~$\bullet\circ\bullet$ & $\bL^2$ &
$\circ\bullet\bullet$~$\circ\bullet\circ$ & $\bL^3$ &
$\circ\circ\bullet$~$\circ\bullet\circ$ & $\bL^3(\bL+2)$ &
$\bullet\circ\circ$~$\circ\circ\circ$ & $\bL^3(\bL+1)^2$ \\
\hline
$\bullet\circ\bullet$~$\bullet\bullet\circ$ & $\bL^2$ &
$\circ\bullet\bullet$~$\bullet\circ\circ$ & $\bL^3$ &
$\circ\circ\bullet$~$\bullet\circ\circ$ & $\bL^3(\bL+1)$ &
$\circ\circ\circ$~$\circ\circ\circ$ & $\bL^3(\bL+1)(\bL^2+\bL+1)$ \\
\hline
\end{tabular}
\end{center}
}

Next, one applies inclusion--exclusion to go from the class $[\F(V_1,V_2,V_3)]$
as above,which correspond to the complement of the determinant in subspaces
obtained as intersections of the 6 divisors, to classes corresponding to the
complement of the determinant in {\em the complement of smaller subspaces
in a given subspace.\/} This produces the following list of classes in
the Grothendieck group; in this table, the classes do include the common 
factor~$(\L-1)^3$.

{\tiny
\begin{center}
\begin{tabular}{|c|c||c|c||c|c||c|c||}
\hline
$\bullet\bullet\bullet$~$\bullet\bullet\bullet$ & $0$ &
$\bullet\bullet\circ$~$\circ\bullet\bullet$ & $0$ &
$\bullet\circ\circ$~$\circ\bullet\bullet$ & $\bL^2(\bL-1)^4$ &
$\circ\bullet\circ$~$\circ\circ\bullet$ & $\bL^2(\bL-1)^5$ \\
\hline
$\circ\bullet\bullet$~$\bullet\bullet\bullet$ & $0$ &
$\bullet\bullet\circ$~$\bullet\circ\bullet$ & $\bL^2(\bL-1)^3$ &
$\bullet\circ\circ$~$\bullet\circ\bullet$ & $\bL^2(\bL-1)^4$ &
$\circ\bullet\circ$~$\circ\bullet\circ$ & $\bL^2(\bL-1)^5$ \\
\hline
$\bullet\circ\bullet$~$\bullet\bullet\bullet$ & $0$ &    
$\bullet\bullet\circ$~$\bullet\bullet\circ$ & $0$ &
$\bullet\circ\circ$~$\bullet\bullet\circ$ & $\bL^2(\bL-1)^4$ &
$\circ\bullet\circ$~$\bullet\circ\circ$ & $\bL^2(\bL-1)^5$ \\
\hline
$\bullet\bullet\circ$~$\bullet\bullet\bullet$ & $0$ &
$\bullet\bullet\bullet$~$\circ\circ\bullet$ & $\bL(\bL-1)^4$ &
$\bullet\circ\bullet$~$\circ\circ\bullet$ & $\bL^2(\bL-1)^4$ &
$\circ\bullet\bullet$~$\circ\circ\circ$ & $\bL(\bL-1)^6$ \\
\hline
$\bullet\bullet\bullet$~$\circ\bullet\bullet$ & $0$ &
$\bullet\bullet\bullet$~$\circ\bullet\circ$ & $0$ &
$\bullet\circ\bullet$~$\circ\bullet\circ$ & $\bL^2(\bL-1)^4$ &
$\bullet\circ\circ$~$\circ\circ\bullet$ & $\bL(\bL^2-\bL-1)(\bL-1)^4$ \\
\hline
$\bullet\bullet\bullet$~$\bullet\circ\bullet$ & $\bL(\bL-1)^3$ &
$\bullet\bullet\bullet$~$\bullet\circ\circ$ & $\bL(\bL-1)^4$ &
$\bullet\circ\bullet$~$\bullet\circ\circ$ & $\bL(\bL-1)^5$ &
$\bullet\circ\circ$~$\circ\bullet\circ$ & $\bL^2(\bL-1)^5$ \\
\hline
$\bullet\bullet\bullet$~$\bullet\bullet\circ$ & $0$ &
$\circ\circ\circ$~$\bullet\bullet\bullet$ & $0$ &
$\bullet\bullet\circ$~$\circ\circ\bullet$ & $\bL^2(\bL-1)^4$ &
$\bullet\circ\circ$~$\bullet\circ\circ$ & $\bL^2(\bL-1)^5$ \\
\hline
$\circ\circ\bullet$~$\bullet\bullet\bullet$ & $0$ &
$\circ\circ\bullet$~$\circ\bullet\bullet$ & $\bL^2(\bL-1)^4$ &
$\bullet\bullet\circ$~$\circ\bullet\circ$ & $0$ &
$\bullet\circ\bullet$~$\circ\circ\circ$ & $\bL^2(\bL-1)^5$ \\
\hline
$\circ\bullet\circ$~$\bullet\bullet\bullet$ & $0$ & 
$\circ\circ\bullet$~$\bullet\circ\bullet$ & $\bL(\bL-1)^5$ &
$\bullet\bullet\circ$~$\bullet\circ\circ$ & $\bL^2(\bL-1)^4$ &
$\bullet\bullet\circ$~$\circ\circ\circ$ & $\bL^2(\bL-1)^5$ \\
\hline
$\circ\bullet\bullet$~$\circ\bullet\bullet$ & $\bL^2(\bL-1)^3$ &
$\circ\circ\bullet$~$\bullet\bullet\circ$ & $\bL^2(\bL-1)^4$ &
$\bullet\bullet\bullet$~$\circ\circ\circ$ & $\bL(\bL-1)^5$ &
$\circ\circ\circ$~$\circ\circ\bullet$ & $\bL(\bL^2-\bL-1)(\bL-1)^5$ \\
\hline
$\circ\bullet\bullet$~$\bullet\circ\bullet$ & $\bL(\bL-1)^4$ &
$\circ\bullet\circ$~$\circ\bullet\bullet$ & $\bL^2(\bL-1)^4$ &
$\circ\circ\circ$~$\circ\bullet\bullet$ & $\bL^2(\bL-1)^5$ &
$\circ\circ\circ$~$\circ\bullet\circ$ & $\bL^2(\bL-1)^6$ \\
\hline
$\circ\bullet\bullet$~$\bullet\bullet\circ$ & $0$ &
$\circ\bullet\circ$~$\bullet\circ\bullet$ & $\bL^2(\bL-1)^4$ &
$\circ\circ\circ$~$\bullet\circ\bullet$ & $\bL^2(\bL-1)^5$ &
$\circ\circ\circ$~$\bullet\circ\circ$ & $\bL^2(\bL-1)^6$ \\
\hline
$\bullet\circ\circ$~$\bullet\bullet\bullet$ & $0$ &
$\circ\bullet\circ$~$\bullet\bullet\circ$ & $\bL^3(\bL-1)^3$ &
$\circ\circ\circ$~$\bullet\bullet\circ$ &  $\bL^2(\bL-1)^5$ &
$\circ\circ\bullet$~$\circ\circ\circ$ & $\bL^2(\bL-1)^6$ \\
\hline
$\bullet\circ\bullet$~$\circ\bullet\bullet$ & $\bL^2(\bL-1)^3$ &
$\circ\bullet\bullet$~$\circ\circ\bullet$ & $\bL(\bL-1)^5$ &
$\circ\circ\bullet$~$\circ\circ\bullet$ & $\bL^2(\bL-1)^5$ &
$\circ\bullet\circ$~$\circ\circ\circ$ & $\bL^2(\bL-1)^6$ \\
\hline
$\bullet\circ\bullet$~$\bullet\circ\bullet$ & $\bL(\bL-1)^4$ &
$\circ\bullet\bullet$~$\circ\bullet\circ$ & $\bL^2(\bL-1)^4$ &
$\circ\circ\bullet$~$\circ\bullet\circ$ & $\bL^2(\bL-1)^5$ &
$\bullet\circ\circ$~$\circ\circ\circ$ & $\bL(\bL^2-\bL-1)(\bL-1)^5$ \\
\hline
$\bullet\circ\bullet$~$\bullet\bullet\circ$ & $\bL^2(\bL-1)^3$ &
$\circ\bullet\bullet$~$\bullet\circ\circ$ & $\bL(\bL-1)^5$ &
$\circ\circ\bullet$~$\bullet\circ\circ$ & $\bL(\bL-1)^6$ &
$\circ\circ\circ$~$\circ\circ\circ$ & $\bL(\bL^2-\bL-1)(\bL-1)^6$ \\
\hline
\end{tabular}
\end{center}
}

These are the classes of the individual strata of the stratification of
$\A^9\smallsetminus \hat\cD_3$ determined by $\hat\Sigma_{3,0}$
(including several empty strata). The sum of the classes in
this table is the class $[\A^9\smallsetminus \hat\cD_3]$, that is
$\bL^3 (\bL+1) (\bL^2+\bL+1) (\bL-1)^3$
($= (\L-1)(\bL^8-\bL^5-\bL^6+\bL^3)$, cf.~Example~\ref{class23}).

It is interesting to notice that the expressions simplify 
when one takes inclusion--exclusion into account. The cancellations due to
inclusion-exclusion mostly lead to classes of the form $\L^a (\L-1)^b$.

\smallskip

In terms of Feynman integrals, in the case of the wheel with three spokes,
we are interested in the relative cohomology
$$ H^*(\A^9\smallsetminus \hat\cD_3, \hat\Sigma_{3,0}\smallsetminus 
(\hat\cD_3\cap \hat\Sigma_{3,0})). $$
The hypersurface complement $\A^9\smallsetminus \hat\cD_3$
has class 
\begin{equation}\label{classA9D3}
[\A^9\smallsetminus \hat\cD_3]
=\bL^3 (\bL+1) (\bL^2+\bL+1) (\bL-1)^3,
\end{equation}
while the class of $\hat\Sigma_{3,0}\smallsetminus 
(\hat\cD_3\cap \hat\Sigma_{3,0})$ may be obtained as the
sum of all the classes listed above except 
$\circ\circ\circ$~$\circ\circ\circ$, which
corresponds to the choice of subspaces where 
$V_1=V_2=V_3$ is the whole space
(these are all the strata
of $\hat\Sigma_{3,0}\smallsetminus (\hat\cD_3\cap \hat\Sigma_{3,0})$) 
or, equivalently, the difference of
\eqref{classA9D3} and the last item
$\circ\circ\circ$~$\circ\circ\circ$.
This gives
\begin{multline*}
[\hat\Sigma_{3,0}\smallsetminus (\hat\cD_3\cap \hat\Sigma_{3,0})]=
\bL^3 (\bL+1) (\bL^2+\bL+1) (\bL-1)^3
-\bL(\bL^2-\bL-1)(\bL-1)^6\\
=\bL(6\bL^4-3\bL^3+2\bL^2+2\bL-1)(\bL-1)^3
\end{multline*}
The main information is carried by the class
$\circ\circ\circ$~$\circ\circ\circ$,
\begin{equation}\label{oooooo}
\bL(\bL^2-\bL-1)(\bL-1)^6.
\end{equation}

\begin{center}
\begin{figure}
\includegraphics[scale=.4]{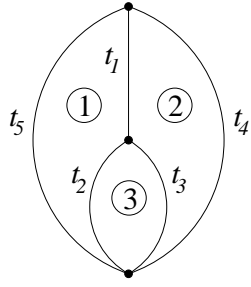}
\caption{Another graph with 3 loops and injective $\tau$ \label{3loopsFig}}
\end{figure}
\end{center}

In the case of other 3-loop graphs $\Gamma$, such as the one illustrated
in Figure~\ref{3loopsFig}, the divisor $\hat\Sigma_\Gamma$ is a union
of components of $\hat\Sigma_{3,0}$ (\cf~Proposition~\ref{loopgenus}).
The class of the locus $\hat\Sigma_\Gamma\smallsetminus 
(\hat\cD_3\cap \hat\Sigma_{3,0})$ may be obtained by adding up 
all contributions listed above, for the strata contained in 
$\hat\Sigma_\Gamma$. For the example given in Figure~\ref{3loopsFig},
these are the strata contained in the divisors $X_1,\dots,X_5$; the
corresponding classes are those marked by $***$~$***$, where at least
one of the first five $*$ is $\bullet$; or, equivalently,  the difference of
\eqref{classA9D3} and the classes marked $\circ\circ\circ$~$\circ\circ\bullet$
and $\circ\circ\circ$~$\circ\circ\circ$. The sum of these two classes is
\[
\bL(\bL^2-\bL-1)(\bL-1)^5+\bL(\bL^2-\bL-1)(\bL-1)^6
=\bL^2(\bL^2-\bL-1)(\bL-1)^5
\]
(\cf~\eqref{oooooo}), and hence
\begin{multline*}
[\hat\Sigma_\Gamma\smallsetminus (\hat\cD_3\cap \hat\Sigma_{3,0})]
=\bL^3 (\bL+1) (\bL^2+\bL+1) (\bL-1)^3
-\bL^2(\bL^2-\bL-1)(\bL-1)^5 \\
=\bL^2(5\bL^3+1)(\bL-1)^3\quad.
\end{multline*}

\section{Divergences and renormalization}\label{Divs}

Our analysis in the previous sections of this paper concentrated on the task
of showing that a certain relative cohomology is a realization of a mixed
Tate motive $\m(X,Y)$, where the loci $X$ and $Y$ are constructed, respectively,
as the complement of the determinant hypersurface and the intersection with this
complement of a normal crossing divisor that contains the image of the boundary 
of the domain of integration $\sigma_n$ under the  map $\tau_\Gamma$, for any 
graph $\Gamma$ with fixed number of loops and fixed genus. Knowing that
$\m(X,Y)$ is a mixed Tate motive implies that, when convergent, the parametric
Feynman integral for all such graphs is a period of a mixed Tate motive. This,
however, does not take into account the presence of divergences in the
Feynman integrals.

There are several different approaches to regularize and renormalize 
the divergent integrals. We outline here some of the possibilities and 
comment on how they can be made compatible with our approach.

\subsection{Blowups}

One possible approach to dealing with divergences coming from the
intersections of the divisor $\Sigma_n$ with the graph hypersurface
$X_\Gamma$ is the one proposed by Bloch--Esnault--Kreimer in \cite{BEK}, 
namely one can proceed to perform a series of blowups of strata of this 
intersection until one has separated the domain of integration from the 
hypersurface and in this way regularized the integral.

In our setting, a similar approach should be reformulated in the
ambient $\A^{\ell^2}$ and in terms of the intersection of the
determinant hypersurface $\hat\cD_\ell$ with the divisor 
$\hat\Sigma_{\ell,g}$. 
If the main question posed in \S\ref{Tmq} has an affirmative answer,
then this intersection admits a stratification by mixed Tate 
nonsingular loci. It seems likely that a suitable sequence of blow-ups would
then have the effect of regularizing the integral, while at the same time maintaining
the motivic nature of the relevant loci unaltered. We intend to
return to a more detailed analysis of this approach in future work.

\subsection{Dimensional regularization and L-functions}

Belkale and Brosnan showed in \cite{BeBro2} that dimensionally
regularized Feynman integrals can be written in the form of a
local Igusa L-function, where the coefficients of the Laurent
series expansion are periods, provided the integrals describing
them are convergent. Such periods have an explicit description
in terms of integrals on simplices $\sigma_n$ and cubes $[0,1]^r$
of algebraic differental forms 
$$ f(t)^{s_0} \omega_n \wedge \frac{f(t)-1}{(f(t)-1)t_1 +1} dt_1 \wedge \cdots\wedge
\frac{f(t)-1}{(f(t)-1)t_r +1} dt_r , $$
for $f(t)= \Psi_\Gamma(t)$ the graph polynomial. The nature of such integrals 
as periods would still be controlled by the same motivic loci that are involved in
the original parametric Feynman integral before dimensional regularization.
The result of \cite{BeBro2}  is formulated only for the case of log-divergent integrals
where only the graph polynomial $\Psi_\Gamma(t)$ is present in
the Feynman parametric form and not the polynomial $P_\Gamma(t,p)$.
The result was extended to the more general non-log-divergent case 
by Bogner and Weinzierl in \cite{BoWe1}.

In this approach, if there are singularities in the integrals that
compute the coeffients of the Laurent series expansion of
the local Igusa L-function giving the dimensionally regularized
Feynman integral, these can be treated by an algorithmic
procedure developed by Bogner and Weinzierl in \cite{BoWe2}
(see also the short survey \cite{BoWe3}). The algorithm is designed
to split the divergent integral into sectors where a change of variable
that introduces a blowup at the origin isolates the divergence 
as a pole in a parameter $1/\epsilon$. One can then do a subtraction
of this polar part in the Laurent series expansion in the variable
$\epsilon$ and eliminate the divergence. The iteration part of
the algorithm is based on Hironaka's polyhedral game and it
is shown in \cite{BoWe2} that the resulting algorithm terminates 
in finite time.

If one uses this approach in our context one will have to show that
the changes of variables introduced in the process of evaluating
the integrals in sectors do not alter the motivic nature of the loci
involved.

\subsection{Deformations}

An alternative to the use of blowups is the use of deformations.
We discuss here the simplest possible procedure one can think
of that uses deformations of the graph hypersurface (or of the
determinant hypersurface). It is not the most satisfactory deformation
method, because it does not lead immediately to a ``minimal
subtraction" procedure, but it suffices here to illustrate the idea.

Consider the original parametric Feynman integral of the form
\begin{equation}\label{Intalphabeta}
 \int_{\sigma_n} \frac{P_\Gamma(t,p)^\beta \omega_n}{\Psi_\Gamma(t)^\alpha}, 
\end{equation}
with exponents $\alpha$ and $\beta$ as in \eqref{paramInt},
$$ \alpha = -n +(\ell+1)D/2, \ \ \ \beta=-n+D\ell/2. $$
Again, for our purposes, we can assume to work in the ``stable range"
where $D$ is sufficiently large so that both $\alpha$ and $\beta$ are
positive. The case of small $D$, which is of direct physics interest,
leads one to the different problem of considering the hypersurfaces
defined by $P_\Gamma(t,p)$, as a function of the external momenta $p$
and the singularities produced by the intersections of these with the
domain of integration. This type of analysis can be found in the
physics literature, for instance in \cite{Todorov}. See also 
\cite{BjDr}, \S 18. 

Assuming to work in the range where $\alpha$ and $\beta$ are
positive, one can choose to regularize the integral \eqref{Intalphabeta}
by introducing a deformation parameter $\epsilon \in \C \smallsetminus \R_+$
and replaing \eqref{Intalphabeta} with the deformed
\begin{equation}\label{Intdeformed}
\int_{\sigma_n} \frac{P_\Gamma(t,p)^\beta \omega_n}{(\Psi_\Gamma(t)-\epsilon)^\alpha}.
\end{equation}
This has the effect of replacing, as locus of the singularities of the integrand, the
graph hypersurface $\hat X_\Gamma =\{ \Psi_\Gamma(t) =0 \}$, with the level
set $\hat X_{\Gamma,\epsilon}=\{ \Psi_\Gamma(t)=\epsilon \}$ of the map
$\Psi_\Gamma :\A^n \to \A$. For a choice of $\epsilon$ in the cut plane $\C \smallsetminus \R_+$,
the hypersurface $\hat X_{\Gamma,\epsilon}$ does not intersect the domain of integration
$\sigma_n$. In fact, for $t_i \geq 0$ one has $\Psi_\Gamma(t)\geq 0$. This choice has
therefore the effect of desingularizing the integral. The resulting function of $\epsilon$ extends
holomorphically to a function on $\C \smallsetminus I$, where $I\subset \R_+$ is the bounded 
interval of values of $\Psi_\Gamma$ on $\sigma_n$.

When we transform the parametric integral using the map $\tau_\Gamma$ into
an integral of a form defined on the complement of the determinant hypersurface
$\hat \cD_\ell$ in $\A^{\ell^2}$ on a domain of integration $\tau_\Gamma(\sigma_n)$
with boundary on the divisor $\hat\Sigma_{\ell, g}$, we can similarly separate the
divisor from the hypersurface by the same deformation, where instead of the
locus $\hat\cD_\ell =\{ \det(x)=0 \}$ one considers the level set $\hat\cD_{\ell,\epsilon}
=\{ \det(x) = \epsilon \}$,  so that $\hat\cD_{\ell,\epsilon}$ does not intersect 
$\tau_\Gamma(\sigma_n)$. The nature of the period described by the deformed 
integral is then controlled by the motive $\m(X_\epsilon,Y_\epsilon)$ for 
$X_\epsilon=\A^{\ell^2}\smallsetminus \hat\cD_{\ell,\epsilon}$ and $Y_\epsilon=\hat\Sigma_{\ell,g}
\smallsetminus (\hat\cD_{\ell,\epsilon}\cap \hat\Sigma_{\ell,g})$.
The question becomes then whether the motivic nature of $\m(X,Y)$ with $X=X_0$
and $Y=Y_0$ and $\m(X_\epsilon,Y_\epsilon)$ is the same. This in general is not
the case, as one can easily construct examples of fibrations where the generic fiber 
is not a mixed Tate motive while the special one is.
However, in this setting one is dealing with a very special case, where the 
deformed variety $\hat\cD_{\ell,\epsilon}$ is given by matrices of fixed determinant.
Up to a rescaling, one can check that the fiber $\hat\cD_{\ell,1}=\SL_n$ is indeed
a mixed Tate motive, from the general results of Biglari \cite{Biglari1}, \cite{Biglari2}
on reductive groups. Thus, over a set of algebraic values of $\epsilon$ one does 
not leave the world of mixed Tate motives.  This will give a statement on the
nature of the regularized Feynman integrals as a period of a mixed Tate motive
$\m(X_\epsilon,Y_\epsilon)$ and reduces then the problem to 
that of removing the divergence as $\epsilon \to 0$, in such a way that what
remains is a convergent integral whose nature as a period is controlled by the
original motive $\m(X,Y)$.

A different approach to the regularization of parametric Feynman integrals using
deformations was discussed in \cite{Mar} in terms of Leray cocycles and
a related regularization procedure. 

\bigskip

{\bf Acknowledgment. } The first author is partially supported by NSA grant H98230-07-1-0024. 
The second author is partially supported by NSF grant DMS-0651925. This work was partly
carried out during stays of the authors at the MPI and MSRI. The first author also thanks 
the California Institute of Technology, where part of this work was done.

\end{document}